\newtheorem{thm}{Theorem}[section]
\newtheorem{prop}[thm]{Proposition}
\newtheorem{lem}[thm]{Lemma}
\newtheorem{cor}[thm]{Corollary}
\theoremstyle{definition}
\newtheorem{example}[thm]{Example}
\newtheorem{remark}[thm]{Remark}
\newtheorem{definition}[thm]{Definition}
\numberwithin{equation}{section}
\newcommand{\wt}{\widetilde}
\newcommand{\PGL}{\operatorname{PGL}}
\newcommand{\Aut}{\operatorname{Aut}}
\newcommand{\id}{\operatorname{id}}
\newcommand{\Pic}{\operatorname{Pic}}
\newcommand{\im}{\operatorname{im}}
\newcommand{\Gal}{\operatorname{Gal}}
\newcommand{\beq}{\begin{equation}}
\newcommand{\eeq}{\end{equation}}
\begin{document}

\title{Automorphisms of Quartic del Pezzo Surfaces in Characteristic Zero}
\author{Jonathan M. Smith}

\maketitle

\abstract{}
For each field $k$ of characteristic zero, we classify which groups act by automorphisms on a quartic del Pezzo surface over $k$. We also determine which groups act on $k$-rational, stably $k$-rational, or $k$-unirational quartic del Pezzo surfaces. For each group $G$ that is realized over $k$, we exhibit explicit equations for a quartic del Pezzo surface $X$ in $\mathbb{P}^4_k$ such that $G$ acts by automorphisms on $X$.
\endabstract{}

\section{Introduction}

Given a group $G$ and a field $k$, does there exist a quartic del Pezzo surface $X$ over $k$ such that $G \xhookrightarrow{} \Aut(X)$? The primary purpose of this paper is to answer this question for every group and every field of characteristic zero. We also determine which groups act on a $k$-rational, stably $k$-rational, or $k$-unirational quartic del Pezzo surface. Determining which groups act on a $k$-rational quartic del Pezzo surface is progress toward the larger goal of classifying the finite subgroups of the plane Cremona group over $k$.

The automorphism groups of del Pezzo surfaces of any degree have been studied extensively when $k = \bar{k}$. If $k$ is a field of characteristic zero and $k = \bar{k}$, the possible automorphism groups of a del Pezzo surface of degree $d$ over $k$ can be found in \cite{DI09}. The classification over algebraically closed fields of positive characteristic was recently completed due to I. Dolgachev, A. Duncan, and G. Martin (see \cite{DD18} for degrees 3 and 4; see \cite{DM23} and \cite{DM232} for degrees 1 and 2). 

Much less is known when $k$ is not algebraically closed. The automorphism groups of $\mathbb{R}$-rational del Pezzo surfaces were determined by E. Yasinsky \cite{Yasinsky}. When $k$ is an arbitrary perfect field, $k$-rational del Pezzo surfaces of degree 8 and degree 6 are classified in \cite{SZ21}. More recently, A. Zaitsev \cite{Z23} and A. Boitrel \cite{B23} independently described the potential automorphism groups of del Pezzo surfaces of degree five over an arbitrary perfect field.

Our results are not strictly an extension of \cite{B23} and \cite{Z23}. We do not exhaustively classify the full automorphism groups of quartic del Pezzo surfaces (as \cite{B23} and \cite{Z23} do for degree 5). Instead, after fixing the field, we determine the maximal automorphism groups which act on \textit{some} quartic del Pezzo surface. To emphasize the distinction, notice that our classification is trivial for del Pezzo surfaces of degree 5. For any field $k$, there is an injection $G \xhookrightarrow{} \Aut(X)$ for some quintic del Pezzo surface $X$ if and only if $G \subseteq S_5$.

Suppose $k$ is a field of characteristic zero. If $X$ is a quartic del Pezzo surface over $k$ there is an injective group homomorphism $\Aut(X) \to W(\mathsf{D}_5)$, where $\Aut(X)$ is the group of automorphisms of $X$, and $W(\mathsf{D}_5)$ is the Weyl group of the root system $\mathsf{D}_5$. This map is induced by the action of $\Aut(X)$ on the 16 exceptional divisors of $\overline{X}$ after picking a geometric basis for $\Pic \overline{X}$. A different choice of geometric basis will produce a conjugate subgroup of $W(\mathsf{D}_5)$. In this way, $W(\mathsf{D}_5)$ offers an ambient space in which we can compare automorphism groups of different quartic del Pezzo surfaces across different fields. In particular, a subgroup $G \subseteq W(\mathsf{D}_5)$ --- up to conjugacy --- acts by automorphisms on $X$ when $G$ is contained in the image of the map $\Aut(X) \to W(\mathsf{D}_5)$. 

Let $\mathcal{M}_k$ denote the collection of conjugacy classes of subgroups of $W(\mathsf{D}_5)$ that act by automorphisms on some quartic del Pezzo surface over $k$. We say a group $G$ in $\mathcal{M}_k$ is maximal if it is maximal with respect to inclusion. Our main result is the following theorem.
\begin{thm}
    \label{thm:main_intro}
    Let $k$ be a field of characteristic zero. If $G$ is a maximal group in $\mathcal{M}_k$, then $G$ is one of the groups in the table. Each group appears in $\mathcal{M}_k$ if and only if $k$ satisfies the condition in the second column.
    \begin{center}
    \begin{tabular}{|c|c|}
        \hline
        \textbf{Group} & \textbf{Condition on } $k$ \\
        \hline
        $C_2^4 \rtimes C_4$ & $i \in k$ \\
        \hline
        $C_2^4 \rtimes S_3$ & $\epsilon_3 \in k$ \\
        \hline
        $C_2^4 \rtimes D_5$ & $\sqrt{5} \in k$ \\
        \hline
        $C_2^4 \rtimes C_2$ & none \\
        \hline
        $C_2^3 \rtimes S_3$ & none \\
        \hline
        $C_2^3 \cdot S_3$ & $x^2 + y^2 = -3$ has a solution over $k$ \\
        \hline
    \end{tabular}
    \end{center}
\end{thm}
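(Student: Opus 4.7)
The plan is to begin from the classical diagonal normal form and track the descent of its geometric automorphism group to $k$. Over $\bar k$, every smooth quartic del Pezzo $\overline X$ is projectively equivalent to the intersection of quadrics $\sum_i x_i^2 = 0$ and $\sum_i \lambda_i x_i^2 = 0$ for distinct $\lambda_0, \ldots, \lambda_4$, and its geometric automorphism group decomposes as $C_2^4 \rtimes \Gamma$: the abelian part is the group of even sign-changes on the $x_i$, and $\Gamma \le S_5$ is the projective stabilizer of the unordered tuple $(\lambda_i)$ on $\mathbb P^1$. By \cite{DI09}, the maximal choices of $\Gamma$ are $C_4$, $S_3$, and $D_5$. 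This immediately identifies the first four rows of the table as the only candidates for maximal $G \in \mathcal M_k$ whose abelian part descends to the full $C_2^4$.

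To realize each of these four groups under the stated hypothesis, I will exhibit an explicit $k$-rational pencil. When $\sqrt 5 \in k$, take $(\lambda_i)$ to be a $D_5$-orbit on $\mathbb P^1$, which lives over $\mathbb Q(\sqrt 5)$. When $i \in k$, take $(\lambda_i) = (0, 1, i, -1, -i)$ and lift the $C_4$-cycle as a coordinate permutation. When $\epsilon_3 \in k$, take three of the $\lambda_i$ to be the cube roots of a common element and lift the $3$-cycle using $\epsilon_3$. The $C_2^4 \rtimes C_2$ case is realized unconditionally. Conversely, the full $C_2^4$ part is $k$-rational only when every $\lambda_i$ (equivalently, every sign-change generator) is fixed by Galois, and forcing this configuration to be $\Gamma$-symmetric then requires $k$ to contain the relevant cyclotomic invariants: $\zeta_4 = i$, $\zeta_3 = \epsilon_3$, or the real subfield $\mathbb Q(\sqrt 5)$ of $\mathbb Q(\zeta_5)$.

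The main obstacle is the analysis of the remaining two rows, which arise over fields where $\epsilon_3 \notin k$. In that regime $\Gamma = S_3$ cannot act together with the full $C_2^4$, but a codimension-one subgroup $C_2^3 \le C_2^4$ may still be $k$-rational, and the induced $S_3$-action on $C_2^3$ is described by a cocycle. The possible extensions of $S_3$ by $C_2^3$ appearing inside $W(\mathsf D_5)$ are the split $C_2^3 \rtimes S_3$ (realizable over every $k$ by a Galois-generic pencil) and the nonsplit $C_2^3 \cdot S_3$. Distinguishing these amounts to computing an obstruction class in $H^2(\Gal(\bar k/k), C_2)$ tied to the lift of the $3$-cycle, which I will identify with the Brauer class of the quaternion algebra $(-1, -3)_k$. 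That algebra is split precisely when $x^2 + y^2 = -3$ has a $k$-rational solution, giving the stated condition. The final step is a finite enumeration over conjugacy classes of subgroups of $W(\mathsf D_5)$, verifying that no further maximal group appears; this reduces to combinatorics on the 16 exceptional divisors.
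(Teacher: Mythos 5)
Your overall route coincides with the paper's: pass to the five points of $\mathbb{P}^1$ with $\Aut(\overline{X}) \cong C_2^4 \rtimes H$, realize the candidate groups by explicit Galois-stable configurations, extract the cyclotomic conditions from the arithmetic of $\PGL_2(k)$, and identify the obstruction for the non-split extension with the quaternion algebra $(-1,-3)_k$, equivalently the solvability of $x^2+y^2=-3$. (The paper phrases that last step as the embedding problem of $k(\sqrt{-3})/k$ into a $C_4$-extension, via the Galois cocycle defining the twist; your Brauer-class formulation is the same criterion. One small correction of emphasis: the non-splitness is carried by the lifts of the reflections of $S_3$ --- the element $c_4(12)(45)$ has order $4$ --- not by the lift of the $3$-cycle, which lifts to an honest order-$3$ element.)

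The genuine gap is in the completeness half of the statement, i.e.\ that no maximal group outside the table occurs. You establish the necessity of $i\in k$, $\epsilon_3\in k$, $\sqrt{5}\in k$ only under the hypothesis that the full $C_2^4$ descends (all five $\lambda_i$ rational), and you defer everything else to ``combinatorics on the 16 exceptional divisors.'' That enumeration is not purely combinatorial: to know that, say, over $\mathbb{Q}$ the maximal groups are exactly $C_2^4\rtimes C_2$ and $C_2^3\rtimes S_3$, you must rule out groups with \emph{small} abelian part whose image in $S_5$ contains a $4$-cycle or a $5$-cycle --- e.g.\ the bare $C_4=\langle(1425)\rangle$ is contained in neither of those two groups, so if it were in $\mathcal{M}_{\mathbb{Q}}$ your table would be incomplete. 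This requires an arithmetic input about finite subgroups of $\PGL_2(k)$ stabilizing a Galois-stable quintuple (the paper's Lemma~\ref{Lemma:Pre-quasi}, built on Beauville's classification): an order-$5$ element exists in $\PGL_2(k)$ only if $\sqrt{5}\in k$; an order-$4$ element acting on five points fixes exactly one of them, so its degree-two fixed-point scheme acquires a rational point, splits, and forces $i\in k$; and an order-$3$ element over a field without $\epsilon_3$ forces two of the five points to be conjugate, which is what caps the abelian part at $\langle c_1,c_2,c_3\rangle$ and produces the last two rows. Your fixed-point argument extends to cover all of this, but as written the plan would not close the proof of maximality.
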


\begin{remark}
    The groups listed in Theorem \ref{thm:main_intro} and Theorem \ref{thm:intro2} (see below) correspond to specific conjugacy classes of subgroups in $W(\mathsf{D}_5)$. See Table \ref{table:classes} for a list of these classes. The notation $A \cdot B$ denotes an extension of $B$ by $A$. The groups $C_2^3 \cdot S_3$ and $C_2 \cdot S_3$ in Theorems \ref{thm:main_intro} and \ref{thm:intro2} are specific non-split extensions. The first three groups listed in the table of Theorem \ref{thm:main_intro} are the maximal groups in $\mathcal{M}_{\bar{k}}$, where $\bar{k}$ is an algebraically closed field. The maximal groups in $\mathcal{M}_{\bar{k}}$ are well known (see \cite{D12}, \cite{DI09}). 
\end{remark}

\begin{remark}
    Theorem \ref{thm:main_intro} determines the groups $G \subseteq W(\mathsf{D}_5)$ that act on a quartic del Pezzo surface over $k$ in the following way: $\mathcal{M}_k$ is precisely the downward closure in $W(\mathsf{D}_5)$ of the groups in the table of Theorem \ref{thm:main_intro} that are contained in $\mathcal{M}_k$. 
\end{remark}

Let $\mathcal{M}_k^{rat}$ denote the collection of groups in $W(\mathsf{D}_5)$ that act by automorphisms on a $k$-rational quartic del Pezzo surface over $k$. Our second main result classifies which groups act on a $k$-rational quartic del Pezzo surface.

\begin{thm}
    \label{thm:intro2}
    Let $k$ be a field of characteristic zero.
    \begin{itemize}
        \item[(i)] If $\epsilon_3 \in k$, then $\mathcal{M}_k = \mathcal{M}_k^{rat}$.

        \item[(ii)] If $x^2 + y^2 = -3$ does not have a solution over $k$, then $\mathcal{M}_k = \mathcal{M}_k^{rat}$.

        \item[(iii)] If $x^2+y^2 = -3$ has a solution over $k$ and $\epsilon_3 \not\in k$, then every group in $\mathcal{M}_k$ is in $\mathcal{M}_k^{rat}$ except the groups $C_2^3 \cdot S_3$ and $C_2 \cdot S_3$.
    \end{itemize}
\end{thm}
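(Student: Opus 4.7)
My plan is to address Theorem \ref{thm:intro2} by going through each maximal group in Theorem \ref{thm:main_intro} and either exhibiting an explicit $k$-rational quartic del Pezzo surface realizing that group as a subgroup of $\Aut(X)$, or establishing a Galois-theoretic obstruction to rationality. The backbone of the argument is the Manin--Iskovskikh rationality theory for quartic del Pezzo surfaces, which I will invoke in the form: a smooth quartic del Pezzo $X/k$ is $k$-rational only if $X$ has a $k$-point and $(\Pic \overline{X})^{\Gal_k}$ has rank at least $2$. In particular, a $k$-minimal quartic del Pezzo is never $k$-rational, which will be the ingredient doing the heavy lifting for the negative direction.

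For the positive direction, which covers parts (i), (ii), and the non-exceptional cases of (iii), I revisit the explicit equations constructed for Theorem \ref{thm:main_intro}. For each maximal group $G$ that is claimed to lie in $\mathcal{M}_k^{rat}$, the realizing family depends on a small number of parameters in $k$; the task is to choose parameters so that $X$ has both a $k$-point and a nontrivial $\Gal_k$-stable orbit of $(-1)$-curves contractible over $k$. This is essentially straightforward for the groups $C_2^4 \rtimes C_4$, $C_2^4 \rtimes S_3$, $C_2^4 \rtimes D_5$, $C_2^4 \rtimes C_2$, and $C_2^3 \rtimes S_3$. The subtle case is $C_2^3 \cdot S_3$ in part (i): the hypothesis $\epsilon_3 \in k$ provides enough rational data to split the Galois action on a relevant orbit, producing a $k$-rational surface carrying this automorphism group; this is exactly the splitting that fails in case (iii).

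The negative direction in case (iii) is the heart of the argument and the main obstacle. I need to show that if $x^2+y^2=-3$ is solvable over $k$ but $\epsilon_3 \notin k$, then no quartic del Pezzo $X/k$ admitting $H \in \{C_2^3 \cdot S_3, C_2 \cdot S_3\}$ as a subgroup of $\Aut(X)$ is $k$-rational. My approach is to analyze the joint action of $\Gal_k$ and $H$ on the sixteen $(-1)$-curves of $\overline{X}$ inside $W(\mathsf{D}_5)$: the non-split structure of $H$ forces its normalizer to be small, so the image of $\Gal_k$ is tightly constrained. I aim to show that under the arithmetic hypothesis of case (iii), every such Galois action leaves $(\Pic \overline{X})^{\Gal_k}$ of rank exactly one, making $X$ $k$-minimal. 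The delicate point is matching two pieces of arithmetic data: the solvability of $x^2+y^2=-3$ produces a specific Galois-equivariant structure on a sub-lattice of $\Pic \overline{X}$ needed to realize $H$, while the absence of $\epsilon_3$ obstructs any further splitting of that sub-lattice. I expect this step to require a careful cocycle calculation in $H^2(S_3, C_2^3)$ (respectively $H^2(S_3, C_2)$) together with an explicit verification on equations to certify $k$-minimality, and this will be the most technically demanding part of the proof.
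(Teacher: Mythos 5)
Your skeleton (positive direction by exhibiting rational surfaces with the given actions, negative direction by proving $k$-minimality) is the right shape, but the plan has two concrete gaps. For the positive direction, the paper does not verify rationality surface-by-surface on explicit equations, and you should not try to: it proves once and for all that every quasi-split surface is $k$-rational (Proposition \ref{prop:rationalityqs}: the conic $C = 2H - \sum E_i$ is defined over $k$, contracting it yields a degree-five del Pezzo surface, and those are always $k$-rational by Swinnerton-Dyer and Manin), and separately that under the hypotheses of (i) and (ii) \emph{every} group in $\mathcal{M}_k$ acts on a quasi-split surface (Corollaries \ref{Corollary:Epsilon_3} and \ref{Corollary:Quasi}), while in case (iii) every group except $C_2^3\cdot S_3$ and $C_2\cdot S_3$ does (Theorem \ref{Theorem:Non-quasi}). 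Without this reduction your strategy of ``going through the maximal groups'' does not reach all the groups the statement covers: in case (iii) the statement asserts that every proper subgroup of $C_2^3\cdot S_3$ other than $C_2\cdot S_3$ lies in $\mathcal{M}_k^{rat}$, and these are not subgroups of any unobstructed maximal group you would have handled. Also, the rationality criterion you invoke (a $k$-point plus invariant Picard rank at least $2$) is only necessary; to conclude rationality you must actually name a Galois-stable set of pairwise disjoint lines to contract and land on a rational surface, which is exactly what the conic $C$ provides uniformly.

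For the negative direction in (iii), the computation you need is not a class in $H^2(S_3,C_2^3)$. The paper's route (Proposition \ref{Prop:RationalityTwist}) uses Proposition \ref{Prop:TwistCondition}: when $\epsilon_3\notin k$, any $X$ carrying one of the two exceptional groups is the twist of a quasi-split surface by a cocycle $(a_\gamma)$ with $a_\gamma\in\{c_4,c_5\}$ precisely when $\gamma(\epsilon_3)=\epsilon_3^2$; one then lists the resulting Galois orbits of the sixteen lines and checks Manin's criterion that each orbit contains two intersecting lines, giving $k$-minimality. Your alternative --- the Galois image in $W(\mathsf{D}_5)$ must centralize the image of $\Aut(X)$, the centralizer of $C_2\cdot S_3$ is the order-two group $\langle c_4(45)\rangle$, that image cannot be trivial when $\epsilon_3\notin k$ (a split surface whose five points admit an order-three symmetry forces $\epsilon_3\in k$ by Lemma \ref{Lemma:Pre-quasi}), and $c_4(45)$ fixes only a rank-one sublattice of $\Pic\overline{X}$ --- is viable and is essentially the argument the paper uses for Proposition \ref{Prop:StablyRational}; but those four steps must be made explicit, and the group cohomology you propose plays no role in any of them.
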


\begin{remark}
    Take special note of the last group listed in Theorem \ref{thm:main_intro}. The condition that $x^2 + y^2 = -3$ has a solution over $k$ is equivalent to the splitting condition for the quaternion algebra $k\langle i,j \mid i^2 = -1, j^2 = -3, ij = -ji \rangle$. This condition is sufficient for $C_2^3 \cdot S_3$ to act by automorphisms on some quartic del Pezzo surface over $k$, but it is not sufficient for $C_2^3 \cdot S_3$ to act on a $k$-rational quartic del Pezzo surface. The group $C_2 \cdot S_3$ is a subgroup of $C_2^3 \cdot S_3$ that exhibits this same phenomenon. However, these are the only subgroups of $W(\mathsf{D}_5)$ up to conjugacy that appear in some $\mathcal{M}_k$ without acting on a $k$-rational surface. In fact, every other subgroup of $W(\mathsf{D}_5)$ that appears in $\mathcal{M}_k$ acts on a quasi-split surface over $k$ (see Definition \ref{def:quasi-split} and Theorem \ref{Theorem:Non-quasi}).
\end{remark}

This paper is organized as follows. In Section 2, we recall some facts about the structure of the exceptional divisors on a quartic del Pezzo surface and the action of the automorphism group. In Section 3, we prove an intermediate result classifying the groups that act by automorphisms on quasi-split quartic del Pezzo surfaces. In Section 4, we extend this intermediate result to find the groups that act on any quartic del Pezzo surface. In Section 5, we exhibit explicit equations for quartic del Pezzo surfaces in $\mathbb{P}^4_k$ realizing the automorphism groups in Theorem \ref{thm:main_intro} over the appropriate field. In Section 6, we determine which groups act on a $k$-rational, stably $k$-rational, or $k$-unirational quartic del Pezzo surface.

\subsection*{Acknowledgments} I would like to thank my advisor, Alexander Duncan, for many helpful conversations. The author was partially supported by a USC SPARC Grant.

\section{Preliminaries}

Throughout the paper, unless explicitly stated otherwise, $k$ will be an arbitrary field of characteristic zero. We pick an algebraic closure $\bar{k}$ and denote the absolute Galois group $\Gal(\bar{k}/k)$ as $\Gamma$. If $X$ is a smooth projective surface over $k$, we denote $X \times_{\text{Spec}(k)} \text{Spec}(\bar{k})$ as $\overline{X}$. There is an induced action of $\Gamma$ on $\overline{X}$, and there is also a natural action of $\Gamma$ on the group of automorphisms $\Aut(\overline{X})$ of $\overline{X}$. The group of automorphisms of $X$ defined over $k$, denoted $\Aut(X)$, is precisely the group of $\bar{k}$-automorphisms fixed by the $\Gamma$-action. 

Given surfaces $X$ and $Y$ over $k$, a rational map $f: X \dashrightarrow Y$ is defined over $k$ when the corresponding map $\bar{f}: \overline{X} \dashrightarrow \overline{Y}$ is $\Gamma$-equivariant. The surface $X$ is said to be \textit{$k$-rational} if there exists a birational map $f: X \dashrightarrow \mathbb{P}^2_k$ defined over $k$. The surface $X$ is \textit{stably $k$-rational} if $X \times \mathbb{P}^m_k$ is $k$-rational for some $m$. The surface $X$ is \textit{$k$-unirational} if there exists a dominant rational map $f: \mathbb{P}^m_k \dashrightarrow X$ defined over $k$.

\subsection{Group theoretic notation} Throughout, we use the following notation:
\begin{itemize}
    \item[-] $A \rtimes B$ is a semidirect product where $A$ is normal;
    \item[-] $A \cdot B$ denotes an extension of $B$ by $A$;
    \item[-] $C_n$ is the cyclic group with $n$ elements;
    \item[-] $S_n$ is the symmetric group on $n$ letters;
    \item[-] $D_n$ is the dihedral group with $2n$ elements;
    \item[-] $W(\mathsf{D}_5)$ is the Weyl group of the root system $\mathsf{D}_5$;
    \item[-] $\epsilon_n$ denotes a primitive $n$th root of unity;
\end{itemize}
The group $W(\mathsf{D}_5)$ will play an integral role, so we describe its structure here. $W(\mathsf{D}_5)$ is isomorphic to $C_2^4 \rtimes S_5$. If we set
\begin{equation*}
    C_2^4 = \left\{(a_1,a_2,a_3,a_4,a_5) \in C_2^5 \mid \sum a_i = 0 \right\}
\end{equation*}
then $S_5$ acts on $C_2^4$ by permuting the coordinates. For the remainder of the paper we will denote elements of $W(\mathsf{D}_5)$ via the identification
\begin{equation*}
    W(\mathsf{D}_5) = \left\{ab \bigm| a = (a_i)_{i=1}^5 \in C_2^5 \text{ with } \sum_{i=1}^5 a_i = 0, \text{ and } b \in S_5 \right\}.
\end{equation*}
We let $c_i = (a_i)_{i=1}^5$ with $a_i = 0$ and $a_j = 1$ for $i \neq j$. Notice that the $c_i$ generate $C_2^4$. Also note that if $I \subseteq \{1,2,3,4,5\}$, then $\prod_{i \in I} c_i = \prod_{i \not\in I} c_i$ in $C_2^4$.

\subsection{The 16 lines} A del Pezzo surface $X$ of degree $d$ is a smooth projective surface with an ample anticanonical divisor $-K_X$ such that $K_X^2 = d$. Let $X$ be a del Pezzo surface of degree 4 over $k$ (i.e. a quartic del Pezzo surface). Then $\overline{X}$ is the blowup $\pi: \overline{X} \to \mathbb{P}^2_{\bar{k}}$ of five points $P_1,...,P_5$ in general position on $\mathbb{P}^2_{\bar{k}}$. As a result, $\Pic \overline{X} \cong \mathbb{Z}^6$ and $\Pic \overline{X}$ is generated by $\{H, E_1,...,E_5\}$ where $E_i = \pi^{-1}(P_i)$ and $H$ is the pullback by $\pi$ of a line in $\mathbb{P}^2_{\bar{k}}$. The canonical class of $\Pic \overline{X}$ is $K_X = -3H + \sum_{i=1}^5 E_i$. The intersection pairing on $\Pic \overline{X}$ is determined by the following relations:
\begin{equation*}
    E_i \cdot E_j = -\delta_{ij}; \quad H^2 = 1; \quad H \cdot E_i = 0.
\end{equation*}

\begin{definition}
    An effective divisor $D$ on $X$ is \textbf{exceptional} if $D^2 = D \cdot K_X = -1$.
\end{definition}

\noindent Any quartic del Pezzo surface has exactly 16 exceptional divisors. They are
\begin{itemize}
    \item[(i)] $E_i = \pi^{-1}(P_i)$ for $1 \leq i \leq 5$.

    \item[(ii)] $L_{ij} = \wt{\pi^{-1}}(\mathcal{L}_{ij}) = H - E_i - E_j$; the strict transform of the line $\mathcal{L}_{ij}$ through $P_i$ and $P_j$.

    \item[(iii)] $C = \wt{\pi^{-1}}(\mathcal{C}) = 2H - \sum_{i=1}^5 E_i$; the strict transform of the unique conic $\mathcal{C}$ through $P_1,...,P_5$.
\end{itemize}

\noindent The 16 exceptional divisors correspond to the 16 lines defined over $\bar{k}$ on $X$ under the anticanonical embedding into $\mathbb{P}^4_k$. Following the exposition of Section 8.1 in \cite{Blanc}, we relate $W(\mathsf{D}_5)$ to the action of $\Aut(\overline{X})$ on the exceptional pairs of $\Pic \overline{X}$.

\begin{definition}
    A pair of exceptional divisors $\{C,D\}$ on $X$ is an \textbf{exceptional pair} if $C^2 = D^2 = 0$ and $C+D = -K_X$.
\end{definition}

\begin{lem}[Lemma 8.1.2 of \cite{Blanc}]
    Let $X$ be a quartic del Pezzo surface. Then $X$ has exactly five exceptional pairs given by $\{H - E_i, -K_X - H + E_i\} \text{ for } 1 \leq i \leq 5$.
\end{lem}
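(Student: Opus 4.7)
The plan is to translate the conditions on $\{C,D\}$ into two integer equations in the basis $\{H, E_1, \ldots, E_5\}$ of $\Pic \overline{X}$ and enumerate the solutions. Write $C = aH - \sum_{i=1}^5 b_i E_i$ with $a, b_i \in \mathbb{Z}$; the relation $D = -K_X - C = (3-a)H - \sum_{i=1}^5 (1-b_i)E_i$ then determines $D$. Expanding $(C+D)^2 = K_X^2 = 4$ together with $C^2 = D^2 = 0$ yields $C \cdot D = 2$, and since $C \cdot D = C \cdot (-K_X) - C^2 = -C \cdot K_X$, this forces $C \cdot K_X = -2$. In coefficients, the conditions $C^2 = 0$ and $C \cdot K_X = -2$ become
\begin{equation*}
    \sum_{i=1}^5 b_i^2 \,=\, a^2, \qquad \sum_{i=1}^5 b_i \,=\, 3a - 2.
\end{equation*}

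Cauchy--Schwarz gives $(3a-2)^2 = (\sum b_i)^2 \le 5 \sum b_i^2 = 5a^2$, which rearranges to $a^2 - 3a + 1 \le 0$. The real roots of $a^2 - 3a + 1$ are $(3 \pm \sqrt{5})/2$, so the only integers satisfying this are $a = 1$ and $a = 2$. For each, the elementary inequality $\sum b_i^2 \ge \sum |b_i| \ge |\sum b_i|$ on integer tuples, whose equality case forces every $b_i \in \{-1,0,1\}$ all of the same sign, pins the solutions down completely: when $a = 1$ exactly one $b_i$ equals $1$ and the rest are $0$, giving $C = H - E_i$; when $a = 2$ four of the $b_j$ equal $1$ and one is $0$, giving $C = 2H - \sum_{j \ne i} E_j = -K_X - (H - E_i)$. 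These produce exactly the five distinct pairs $\{H - E_i,\, -K_X - H + E_i\}$ for $1 \le i \le 5$.

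To finish, it remains to check that each of the two classes in such a pair is represented by an effective divisor: $H - E_i$ is the class of the strict transform of any line through $P_i$, and $2H - \sum_{j \ne i} E_j$ is the class of the strict transform of a conic through the other four points (alternatively, Riemann--Roch gives $\chi = 2$, hence $h^0 \ge 2$). The main difficulty, such as it is, is interpretive rather than mathematical: as written the definition refers to a ``pair of exceptional divisors,'' which would force $C^2 = -1$, contradicting $C^2 = 0$; the argument above takes the operative hypothesis to be that $\{C,D\}$ is a pair of effective divisor classes with $C + D = -K_X$ and $C^2 = D^2 = 0$ --- i.e.\ a pair of conic-bundle classes summing to $-K_X$. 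Once this reading is adopted, the proof reduces to the short Diophantine enumeration described above.
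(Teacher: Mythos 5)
Your proof is correct, and it is worth noting that the paper itself supplies no argument here: the statement is quoted verbatim as Lemma 8.1.2 of \cite{Blanc}, so there is no in-paper proof to compare against. Your Diophantine enumeration is a clean, self-contained substitute. Writing $C = aH - \sum b_i E_i$, the reductions $(C+D)^2 = K_X^2 = 4 \Rightarrow C\cdot D = 2 \Rightarrow C\cdot K_X = -2$ and the resulting system $\sum b_i^2 = a^2$, $\sum b_i = 3a-2$ are all correct; Cauchy--Schwarz correctly restricts to $a \in \{1,2\}$ (and note your argument needs no positivity assumption on $a$, so effectivity is genuinely not used until the final verification step), and the equality analysis in $\sum b_i^2 \ge \sum|b_i| \ge |\sum b_i|$ pins down the two solution shapes, which are exchanged by $C \mapsto -K_X - C$, yielding exactly the five unordered pairs. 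Your handling of the definitional glitch is also apt: the paper's phrase ``pair of exceptional divisors'' is incompatible with its own definition of exceptional divisor ($D^2 = -1$), and the intended reading --- a pair of effective classes with $C^2 = D^2 = 0$ and $C + D = -K_X$, i.e.\ the two conic-bundle classes --- is the one under which the lemma is true and is what the rest of the paper uses (each pair being a line through $P_i$ together with the conic through the other four points). The effectivity check via strict transforms or Riemann--Roch closes the argument.
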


\noindent Geometrically, each pair corresponds to a line through one of the blown-up points and the conic through the other four points. The action of $\Aut(\overline{X})$ on the exceptional pairs induces a homomorphism $\rho: \Aut(\overline{X}) \to S_5$. We have an exact sequence of $\Gamma$-groups
\begin{equation}
\label{exactsequence}
    1 \to A \to \Aut(\overline{X}) \xrightarrow{\rho} H \to 1
\end{equation}
where $H$ is the image of $\rho$ in $S_5$. Note that $A$ and $H$ vary depending on the choice of $X$. We define a group homomorphism $\gamma: A \to C_2^5$ by $\gamma(g) = (a_i)_{i=1}^5$ where $a_i = 1$ if $g$ swaps $H - E_i$ with $-K_X - H + E_i$ and $a_i = 0$ if $g$ fixes both.

\begin{prop}[Prop. 8.1.3 of \cite{Blanc}]
\label{prop:Aut(X)_structure}
$A$ and $H$ have the following structure as abstract groups:
\begin{itemize}
    \item[(i)] $\gamma$ induces an isomorphism $A \cong \{(a_1,a_2,a_3,a_4,a_5) \in C_2^5 | \sum_{i=1}^5 a_i = 0\} \cong C_2^4$.

    \item[(ii)] $H$ is identified with the subgroup of $\PGL_3(\bar{k})$ stabilizing $\{P_1,...,P_5\}$.

    \item[(iii)] $\Aut(\overline{X}) \cong A \rtimes H$ where $H$ acts on $A$ by permuting the coordinates.
\end{itemize}
\end{prop}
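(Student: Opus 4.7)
The plan is to prove the three parts by combining the embedding $\Aut(\overline{X}) \hookrightarrow W(\mathsf{D}_5)$ coming from the action on $\Pic \overline{X}$ with the anticanonical realization of $\overline{X}$ as a smooth intersection of two quadrics in $\mathbb{P}^4_{\bar{k}}$.

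For part (i), I would first show $\gamma$ is injective: if $g \in A$ fixes each $H - E_i$, then combined with $g$ fixing $-K_X = 3H - \sum_{i=1}^5 E_i$, summing and subtracting force $g(H) = H$ and $g(E_i) = E_i$ for every $i$, so $g$ is trivial on $\Pic \overline{X}$ and therefore $g = \id$ (the action on $\Pic \overline{X}$ is faithful for a quartic del Pezzo). To pin down the image, I would use that $\Aut(\overline{X})$ embeds in $W(\mathsf{D}_5) = C_2^4 \rtimes S_5$, and the subgroup preserving each pair setwise is exactly the $C_2^4$ factor, which corresponds under the standard identification to the sum-zero subgroup of $C_2^5$. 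For surjectivity, I would diagonalize the defining pencil as $\overline{X} = V(\sum x_i^2) \cap V(\sum a_i x_i^2)$ in $\mathbb{P}^4_{\bar{k}}$ with the $a_i$ distinct; then the 16 sign changes $(x_0, \ldots, x_4) \mapsto (\pm x_0, \ldots, \pm x_4)$ modulo the global scalar give $16$ automorphisms of $\overline{X}$, each preserving every singular quadric in the pencil (equivalently, each pair), hence all lying in $A$.

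For part (ii), one direction is functorial: any $\phi \in \PGL_3(\bar{k})$ stabilizing $\{P_1, \ldots, P_5\}$ lifts uniquely to an automorphism of $\overline{X}$ by the universal property of blowing up, and its image in $S_5$ is the induced permutation. For the converse, given $g \in \Aut(\overline{X})$ with image $\sigma \in H$, I would use the intersection pattern of $E_i$ with each pair (namely $E_i \cdot (H - E_i) = 1$ while $E_i$ is orthogonal to the other pair-member, and the opposite holds for pairs $j \neq i$) to identify a unique $a \in A$ such that $ag$ sends $E_i$ to $E_{\sigma(i)}$ for each $i$. The modified automorphism preserves the contracted divisor set $\{E_1, \ldots, E_5\}$ and therefore descends along $\pi$ to an element of $\PGL_3(\bar{k})$ permuting the $P_i$ as $\sigma$. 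For part (iii), the functorial lift in (ii) is canonical and hence a homomorphic section of $\rho$, giving the semidirect product decomposition $\Aut(\overline{X}) \cong A \rtimes H$; conjugating a sign-change automorphism by a coordinate permutation manifestly permutes the sign positions, so the induced action corresponds under $\gamma$ to the permutation action of $S_5$ on the coordinates of $C_2^5$.

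The main obstacle I expect is the intersection-bookkeeping step in part (ii): producing the correction $a \in A$ requires tracking how the pair-swap generators of $A$ act on each of the $16$ exceptional divisors rather than just on the $5$ pairs. Conceptually, this amounts to verifying that $A$ acts simply transitively on the set of ordered $5$-tuples of pairwise disjoint exceptional divisors lying over the same unordered tuple of pairs, a combinatorial statement about $(-1)$-curves on a quartic del Pezzo surface.
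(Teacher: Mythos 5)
The paper does not prove this proposition; it imports it verbatim as Prop.~8.1.3 of \cite{Blanc}, so there is no in-text argument to compare against line by line. Your reconstruction is correct and follows the standard route, and its ingredients are exactly the ones the paper deploys in the surrounding discussion: the simultaneous diagonalization $q_1=\sum x_i^2$, $q_2=\sum \alpha_i x_i^2$ and the $16$ sign changes identified with $A$ appear in Section~2.3, and the bookkeeping you flag as the main obstacle in part (ii) is essentially what the paper's Lemma \ref{lemma:Aaction} carries out by writing each exceptional divisor as an explicit linear combination of the pair members $\Omega_j=H-E_j$ and $\overline{\Omega_j}=-K_X-H+E_j$. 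Two small points are worth making explicit if you write this up. First, in (i) your surjectivity count needs both that the $16$ sign classes act faithfully on $X$ (true since the fixed locus of a non-scalar diagonal involution of $\mathbb{P}^4$ is a union of two linear subspaces of total dimension $3$, which cannot contain a nondegenerate irreducible surface) and that the image of $\gamma$ lies in the sum-zero subgroup; the latter can be seen directly, without invoking the $W(\mathsf{D}_5)$ embedding, from integrality of $g(H)$, since fixing $K_X$ and swapping the members of the pairs indexed by $I$ forces $2g(H)=(2+|I|)H+(2-|I|)\sum_{i\in I}E_i-|I|\sum_{i\notin I}E_i$, which is divisible by $2$ only if $|I|$ is even. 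Second, the ``unique correcting $a$'' in (ii) is cleanest via the observation that each exceptional divisor $D$ satisfies $D\cdot\Omega_j+D\cdot\overline{\Omega_j}=1$ for every $j$, so $D$ is determined by the resulting element of $\{0,1\}^5$, the $16$ exceptional divisors realize exactly the $16$ vectors of odd weight, and $A\cong C_2^4$ acts simply transitively on these; this yields a single $a$ with $ag(E_i)=E_{\sigma(i)}$ for all $i$ simultaneously, after which $ag$ blows down to the required element of $\PGL_3(\bar{k})$. With those details filled in, your argument for (iii) (the functorial lift $h\mapsto\pi^{-1}\phi(h)\pi$ is a homomorphic section, and conjugation permutes the sign positions because the lift sends $\Omega_i$ to $\Omega_{\sigma(i)}$) is complete; note this is the same splitting map the paper uses later in the proof of Lemma \ref{Lemma:Moving}(i).
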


\begin{remark}
    The unique conic through $\{P_1,...,P_5\}$ allows us to identify $\{P_1,...,P_5\}$ with five points of $\mathbb{P}^1_{\bar{k}}$. Then $H$ can be identified with the subgroup of $\PGL_2(\bar{k})$ stabilizing the five points of $\mathbb{P}^1_{\bar{k}}$.
\end{remark}

\begin{remark}
    Proposition \ref{prop:Aut(X)_structure} allows us to identify $\Aut(\overline{X})$ with a subgroup of \newline $C_2^4 \rtimes S_5 \cong W(\mathsf{D}_5)$. 
\end{remark}

\noindent We can explicitly describe the action of $A$ on the 16 exceptional divisors. 

\begin{lem}
\label{lemma:Aaction}
Let $c_i = (a_i)_{i=1}^5 \in A$ with $a_i = 0$ and $a_j = 1$ for $i \neq j$. Then
\begin{itemize}
    \item $c_i(E_i) = C$;
    \item $c_i(E_j) = L_{ij}$ when $j \neq i$;
    \item $c_i(L_{jk}) = L_{\{i,j,k\}^c}$ when $i,j,k$ are distinct in $\{1,2,3,4,5\}$.
\end{itemize}
Since $c_i^2 = \id$, this determines where $c_i$ sends each exceptional divisor.
\end{lem}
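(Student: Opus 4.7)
The plan is to compute the action of $c_i$ on the basis $\{H, E_1, \ldots, E_5\}$ of $\Pic \overline{X}$ from the constraints imposed by the definition of $\gamma$, and then read off the action on each of the 16 exceptional divisors.

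By definition of $\gamma$, the fact that $c_i = (a_i)_{i=1}^5$ has $a_i = 0$ and $a_j = 1$ for $j \neq i$ means that $c_i$ fixes both classes $H - E_i$ and $-K_X - H + E_i = 2H - \sum_{k \neq i} E_k$ in the $i$-th exceptional pair, and for every $j \neq i$ it swaps $H - E_j$ with $-K_X - H + E_j = 2H - \sum_{k \neq j} E_k$. This yields the five linear relations
\begin{equation*}
    c_i(H - E_i) = H - E_i, \qquad c_i(H - E_j) = 2H - \sum_{k \neq j} E_k \ \ (j \neq i).
\end{equation*}
Since $c_i$ is induced by an automorphism of $\overline{X}$, it also preserves the canonical class: $c_i(K_X) = K_X$. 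These six equations form a linear system in the six unknowns making up $c_i(H) \in \Pic \overline{X}$, and I would solve it (each $c_i(E_l)$ is expressed in terms of $c_i(H)$ by the first two families, and the $K_X$-equation then pins $c_i(H)$ down). A short computation gives
\begin{equation*}
    c_i(H) = 3H - 2E_i - \sum_{j \neq i} E_j, \quad c_i(E_i) = 2H - \sum_{j=1}^5 E_j, \quad c_i(E_j) = H - E_i - E_j \ \ (j \neq i).
\end{equation*}

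The first two formulas are precisely $c_i(E_i) = C$ and $c_i(E_j) = L_{ij}$, giving the first two bullet points. For the third bullet, I would compute $c_i(L_{jk}) = c_i(H) - c_i(E_j) - c_i(E_k)$ for distinct $i,j,k$; expanding and collecting coefficients yields $H - E_l - E_m$, where $\{l,m\} = \{1,\ldots,5\} \setminus \{i,j,k\}$, so $c_i(L_{jk}) = L_{\{i,j,k\}^c}$. As a consistency check, the involution $c_i^2 = \id$ forces $c_i(L_{ij}) = E_j$ and $c_i(C) = E_i$, which can be verified directly from the formulas.

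There is no real conceptual obstacle: the argument is essentially a bookkeeping exercise. The only thing that must be watched is that the definition of $\gamma$ alone tells us how $c_i$ acts on the five exceptional pairs, not on $\Pic \overline{X}$; it is the additional constraint $c_i(K_X) = K_X$ (equivalently, $c_i$ preserving the intersection pairing) that closes the system and allows us to recover the action on every divisor class.
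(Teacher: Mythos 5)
Your proof is correct; I checked the formulas $c_i(H) = 3H - 2E_i - \sum_{j \neq i} E_j$, $c_i(E_i) = 2H - \sum_j E_j = C$, $c_i(E_j) = H - E_i - E_j = L_{ij}$, and $c_i(L_{jk}) = L_{\{i,j,k\}^c}$, and they all follow from your linear system. The route differs from the paper's in its bookkeeping. The paper expresses each exceptional divisor as a rational linear combination of the ten pair classes $\Omega_j = H - E_j$ and $\overline{\Omega_j} = -K_X - H + E_j$ (for instance $E_i = \tfrac{3}{2}\overline{\Omega_i} + \Omega_i - \tfrac{1}{2}\sum_{k \neq i}\overline{\Omega_k}$) and then applies $c_i$ term by term, since the definition of $\gamma$ prescribes $c_i$ on all ten of these classes; because those classes span $\Pic \overline{X} \otimes \mathbb{Q}$, no further input is needed. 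You instead use only the five relations $c_i(\Omega_j) = \cdots$ together with $c_i(K_X) = K_X$ to solve for the matrix of $c_i$ in the geometric basis $\{H, E_1, \ldots, E_5\}$, and then evaluate on the sixteen lines. The two are equivalent in content ($K_X$-invariance is exactly what recovers the other five relations, since $\Omega_j + \overline{\Omega_j} = -K_X$), but yours has the minor advantage of producing the full matrix of $c_i$ explicitly, while the paper's avoids invoking $K_X$-invariance as a separate hypothesis. Your closing observation --- that the definition of $\gamma$ alone does not determine the action on $\Pic \overline{X}$ without the constraint of preserving $K_X$ (or, equivalently, without using the images of both members of each pair) --- is exactly the right point to flag.
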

\begin{proof}
    Let $I = \{1,2,3,4,5\}$. We let $\Omega_j = H-E_j$ and $\overline{\Omega_j} = -K_{X} - H + E_j$ for any $j \in I$ so that the exceptional pairs of $X$ are $\{\Omega_j, \overline{\Omega_j}\}$. We write the exceptional divisors as linear combinations of divisors within the exceptional pairs.
    \begin{equation*}
        E_i = \frac{3}{2} \overline{\Omega_i} + \Omega_i - \frac{1}{2}\sum_{k \neq i} \overline{\Omega_k}, \quad L_{ij} = \frac{3}{2} \Omega_i + \overline{\Omega_i} - \frac{1}{2} \overline{\Omega_j} - \frac{1}{2}\sum_{k \neq i,j} \Omega_k
    \end{equation*}
    for all $i \in I$ and distinct $i,j \in I$. Then
    \begin{align*}
        c_i(E_i) &= \frac{3}{2}\overline{\Omega_i} + \Omega_i - \frac{1}{2} \sum_{k \neq i} \Omega_k = C \\
        c_i(E_j) &= \frac{3}{2}\Omega_j + \overline{\Omega_j} - \frac{1}{2}\overline{\Omega_i} - \frac{1}{2}\sum_{k \neq i,j} \Omega_k = L_{ij} \\
        c_i(L_{jk}) &= \frac{3}{2}\overline{\Omega_j} + \Omega_j - \frac{1}{2}\Omega_k - \frac{1}{2}\Omega_i - \frac{1}{2}\sum_{r \neq i,j,k} \overline{\Omega_r} = L_{\{i,j,k\}^c}
    \end{align*}
    Since $c_i$ has order two, we have also determined where $c_i$ sends $C, L_{ij}$, and $L_{\{i,j,k\}^c}$.
\end{proof}

\noindent Using the previous lemma, one can check that the action of $A$ on the exceptional divisors is transitive. Therefore, the exceptional divisors form an $A$-torsor.

A sequence of birational morphisms
\begin{equation*}
    \pi: \overline{X} = X_5 \xrightarrow{\pi_5} X_4 \xrightarrow{\pi_4} ... \xrightarrow{\pi_2} X_1 \xrightarrow{\pi_1} \mathbb{P}^2
\end{equation*}
where each $\pi_i: X_i \to X_{i-1}$ is the blow-up of a point of $X_{i-1}$ determines a \textit{geometric basis} $\{H,E_1,...,E_5\}$ for $\Pic \overline{X}$. After choosing a geometric basis, the action of $\Aut(\overline{X})$ on the exceptional pairs induces an injective group homomorphism $\Aut(\overline{X}) \to W(\mathsf{D}_5)$. A different choice of geometric basis will produce a conjugate subgroup in $W(\mathsf{D}_5)$ (see Lemma 6.1 of \cite{DI09}). If $G$ is a subgroup of the image of $\Aut(X)$ in $W(\mathsf{D}_5)$, then we say $G$ \textit{acts by automorphisms on $X$}. Of course, this notion is defined up to conjugacy in $W(\mathsf{D}_5)$. After specifying a field $k$, we can ask which subgroups of $W(\mathsf{D}_5)$ --- up to conjugacy --- act by automorphisms on some quartic del Pezzo surface over $k$.

\subsection{The five points of \texorpdfstring{$\mathbb{P}^1$}{P1}}

Let $X$ be a del Pezzo surface of degree 4 over $k$. Under the anticanonical embedding, $X$ is a complete intersection of quadrics $V(q_1) \cap V(q_2)$ in $\mathbb{P}^4_k$. Let $\mathcal{P} = \mu q_1 + \lambda q_2$ be the pencil spanned by the two quadrics, and let $\Delta$ be the discriminant of $\mathcal{P}$. Then $\Delta$ is a reduced degree five binary form over $k$, and the $\bar{k}$-points of $\Delta$ correspond to singular surfaces in the pencil. We define $S_X$ to be the subscheme of $\mathbb{P}^1_k$ corresponding to $\Delta$. Over $\bar{k}$, the quadratic forms $q_1$ and $q_2$ can be simultaneously diagonalized and written in the form
\begin{equation*}
    q_1 = \sum_{i=0}^4 x_i^2 \quad \text{and} \quad q_2 = \sum_{i=0}^4 \alpha_ix_i^2
\end{equation*}
where the $\alpha_i$ are distinct (see Theorem 8.6.2 of \cite{D12}). In this form, mapping $x_i$ to $-x_i$ induces an automorphism of the surface. These diagonal automorphisms form a subgroup of $\Aut(\overline{X})$ isomorphic to $C_2^4$ that is identified with $A$. The group $\Aut(\overline{X})$ also acts on the coordinates $(\mu:\lambda)$ of the pencil of quadrics and preserves the form $\Delta$, so we can write $\Aut(\overline{X})$ as a semi-direct product
\begin{equation*}
    \Aut(\overline{X}) \cong C_2^4 \rtimes H
\end{equation*}
where $H$ is the finite subgroup of $\PGL_2(\bar{k})$ stabilizing the five $\bar{k}$-points of $S_X$. This recovers the exact sequence (\ref{exactsequence}) from the previous section. If $G$ is a subgroup of $\PGL_2(\bar{k})$ stabilizing five points on $\mathbb{P}^1_{\bar{k}}$, then $G$ is trivial or isomorphic to $C_2$, $C_3$, $C_4$, $C_5$, $S_3$, or $D_5$. Moreover, $\Aut(\overline{X})$ is one of the following five groups:
\begin{equation*}
    C_2^4, \quad C_2^4 \rtimes C_2, \quad C_2^4 \rtimes C_4, \quad C_2^4 \rtimes S_3, \quad C_2^4 \rtimes D_5.
\end{equation*}
See Section 8.6.4 of \cite{D12} for details. 

\begin{remark}
    Note that $H$ --- viewed as a subgroup of $\PGL_2(\bar{k})$ --- permutes the five points of $\mathbb{P}^1_{\bar{k}}$ in the same way it permutes the exceptional pairs of $\Pic \overline{X}$. Also, $\Gamma$ permutes $\{c_1,...,c_5\}$ in $A$ in the same way $\Gamma$ permutes the five points of $\mathbb{P}^1_{\bar{k}}$.
\end{remark}

\noindent We denote $\langle c_1,c_2,c_3,c_4,c_5 \rangle$ in $W(\mathsf{D}_5)$ as $C_2^4$. Using the binary forms in Section 8.6.4 of \cite{D12}, we can easily list the images of the possible $\Aut(\overline{X})$ in $W(\mathsf{D}_5)$. We provide names for the subgroups up to conjugacy in $W(\mathsf{D}_5)$ that will appear frequently:
\begin{table}[h!]
\caption{Names for conjugacy classes. \label{table:classes}}
    \begin{tabular}{|c|c|}
        \hline
        \textbf{Name} & \textbf{Representative of Class} \\
        \hline
        $C_2^4 \rtimes C_2$ & $C_2^4 \rtimes \langle (12)(45) \rangle$ \\
        \hline
        $C_2^4 \rtimes C_4$ & $C_2^4 \rtimes \langle (1425) \rangle$ \\
        \hline
        $C_2^4 \rtimes S_3$ & $C_2^4 \rtimes \langle (12)(45), (123) \rangle$ \\
        \hline
        $C_2^4 \rtimes D_5$ & $C_2^4 \rtimes \langle (12)(45), (15342) \rangle$ \\
        \hline
        $C_2^3 \rtimes S_3$ & $\langle c_1,c_2,c_3 \rangle \rtimes \langle (12)(45), (123) \rangle$ \\
        \hline
        $C_2^3 \cdot S_3$ & $\langle c_1,c_2,c_3,(123), c_4(12)(45) \rangle$ \\
        \hline
        $C_2 \cdot S_3$ & $\langle c_4c_5, (123), c_4(12)(45) \rangle$ \\
        \hline
    \end{tabular}
\end{table}

\section{Quasi-Split Surfaces} 

In \cite{Skorobogatov}, all del Pezzo surfaces of degree 4 over $k$ are identified with twists of what are termed ``quasi-split" surfaces. We will first determine the automorphisms of the quasi-split surfaces and then analyze the automorphisms of the twisted surfaces in the next section.

\begin{definition}
\label{def:quasi-split}
    A del Pezzo surface $X$ of degree 4 over $k$ is called \textbf{split} if all 16 lines on $\overline{X}$ are defined over $k$. A del Pezzo surface $X$ of degree 4 is called \textbf{quasi-split} if at least one of the 16 lines is defined over $k$. 
\end{definition}

\noindent Equivalently, a surface $X$ is quasi-split if it is the blow-up of $\mathbb{P}^2_k$ in a Galois-stable set of five $\bar{k}$-points in general position. In the previous section we constructed a reduced degree five subscheme $S_X$ of $\mathbb{P}^1_k$ corresponding to each del Pezzo surface $X$ of degree 4. Conversely, given a reduced degree five subscheme $S$ of $\mathbb{P}^1_k$, the blow-up of $\mathbb{P}^2_k$ in the image of $S$ under the Veronese embedding $\nu: \mathbb{P}^1 \xhookrightarrow{} \mathbb{P}^2$ produces a quasi-split surface $X_0$. The following lemma is due to Skorobogatov \cite{Skorobogatov}:

\begin{lem}
    Any quasi-split del Pezzo surface $X$ of degree 4 is isomorphic to the blow-up of $\mathbb{P}^2_k$ in the image of $S_X$ under the Veronese embedding $\mathbb{P}^1_k \xhookrightarrow{} \mathbb{P}^2_k$.
\end{lem}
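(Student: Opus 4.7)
The plan is to use a $k$-rational exceptional divisor on $X$ to contract $X$ to a copy of $\mathbb{P}^2_k$ in such a way that the chosen divisor becomes the image of the Veronese embedding through the five blown-up points.

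Let $E$ be a $k$-rational exceptional divisor on $X$, which exists by the quasi-split hypothesis. I first show that the set $B_E := \{D \text{ exceptional on } \overline{X} : D \cdot E = 1\}$ has five elements, is pairwise disjoint, and therefore forms a geometric basis of $\Pic \overline{X}$. Because the $A$-action on the 16 exceptional divisors is transitive, it suffices to verify this in the case $E = E_1$ from a chosen geometric basis; Lemma \ref{lemma:Aaction} then gives $B_E = \{C, L_{12}, L_{13}, L_{14}, L_{15}\}$, whose pairwise disjointness is immediate from the intersection pairing. Since $\Gamma$ preserves intersection numbers and fixes $E$, the set $B_E$ is Galois-stable, so the disjoint union of its members descends to $X$ and can be simultaneously contracted over $k$ to give a birational morphism $\pi \colon X \to Y$.

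The surface $Y$ has $K_Y^2 = 9$ with $-K_Y$ ample and geometric Picard rank one, so $\overline{Y} \cong \mathbb{P}^2_{\bar k}$; hence $Y$ is a Severi--Brauer surface over $k$. Under the anticanonical embedding $X \xhookrightarrow{} \mathbb{P}^4_k$ the divisor $E$ is a line, so $E \cong \mathbb{P}^1_k$, and $\pi|_E$ is a birational morphism of smooth projective curves, hence an isomorphism. Consequently $Y$ has a $k$-point and $Y \cong \mathbb{P}^2_k$. Computing in the new geometric basis (for which $E$ plays the role of the conic class) one finds $E \cdot \pi^* H_Y = 2$, so $\mathcal{C} := \pi(E)$ is a smooth plane conic with a $k$-point, realized as the image of a Veronese embedding $\nu \colon \mathbb{P}^1_k \xhookrightarrow{} \mathbb{P}^2_k$.

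Each $D \in B_E$ contracts to a $\bar{k}$-point of $Y$, and the condition $D \cdot E = 1$ ensures each such point lies on $\mathcal{C}$; the five points together form a $\Gamma$-stable length-$5$ subscheme $\nu(S) \subset \mathbb{P}^2_k$ for some $S \subset \mathbb{P}^1_k$, identifying $X$ with $\operatorname{Bl}_{\nu(S)} \mathbb{P}^2_k$. Finally, to recognize $S = S_X$ I invoke the description in Section 2.3: the five $\bar k$-points of $\mathbb{P}^1$ cutting out $S_X$ correspond via the pencil-of-quadrics construction to the generators $c_1, \dots, c_5$ of $A$, which in turn correspond via Lemma \ref{lemma:Aaction} to the five blown-up points in any blow-up presentation. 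The main obstacle is the first claim, that $B_E$ forms a geometric basis for an arbitrary $k$-rational exceptional divisor; this reduces by transitivity of the $A$-action to a single explicit computation, after which the remaining steps are routine descent arguments.
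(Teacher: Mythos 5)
The paper does not prove this lemma at all --- it is quoted as a result of Skorobogatov --- so your attempt cannot be ``the same as the paper's proof.'' Most of your architecture is sound: the computation that $B_{E_1}=\{C,L_{12},L_{13},L_{14},L_{15}\}$ consists of five pairwise disjoint $(-1)$-curves, the transfer to a general $k$-rational line via the transitive $A$-action, the Galois-equivariant contraction to a Severi--Brauer surface $Y$, the observation that $\pi|_E$ is an isomorphism so that $Y$ acquires a $k$-point and hence $Y\cong\mathbb{P}^2_k$, and the identification of $\pi(E)$ as a smooth conic with a $k$-point passing through the five centers. Up to that point you have correctly shown that $X$ is the blow-up of $\mathbb{P}^2_k$ in $\nu(S)$ for \emph{some} reduced degree five subscheme $S\subset\mathbb{P}^1_k$.

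The gap is in the last step, and it is exactly where the real content of the lemma lives: you need $S$ to be projectively equivalent to $S_X$, the discriminant scheme of the pencil of quadrics, and the argument you give does not deliver this. The correspondence from Section 2.3 between the points of $S_X$, the generators $c_1,\dots,c_5$ of $A$, and the five blown-up points is only an isomorphism of $\Gamma$-sets (equivalently, of \'etale $k$-algebras up to the labelling). Two Galois-stable five-point subschemes of $\mathbb{P}^1_k$ can be in canonical bijection as $\Gamma$-sets without being projectively equivalent --- already over $\bar k$ the cross-ratios need not match --- so ``corresponding to the same index set'' proves nothing about the subschemes themselves. What is actually needed is the classical computation that the five blown-up points, viewed on their common conic $\cong\mathbb{P}^1$, have the same cross-ratio data as the roots $(-\alpha_i:1)$ of the discriminant $\prod(\mu+\lambda\alpha_i)$ of the pencil; equivalently, that the discriminant of the quasi-split surface \eqref{Eq:Quasi-split} built from $P$ is $P$ again. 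This is precisely Skorobogatov's contribution and it requires an explicit argument (or a citation), not a formal one. As written, your proof establishes that every quasi-split quartic del Pezzo surface is the blow-up of $\mathbb{P}^2_k$ along the Veronese image of some $S$, but not that this $S$ is $S_X$, which is what the bijection between subschemes and quasi-split surfaces (and the subsequent twisting machinery) depends on.
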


\noindent In other words, we have a bijection between reduced degree five subschemes of $\mathbb{P}^1_k$ up to projective equivalence and quasi-split del Pezzo surfaces of degree 4 over $k$ up to isomorphism.

\begin{lem}
\label{Lemma:Moving}
    Let $S$ be a reduced degree five subscheme of $\mathbb{P}^1_k$, and let $X_0$ be the quasi-split surface obtained as the blow-up of $\mathbb{P}^2_k$ in the image of $S$ under the Veronese embedding. Let $P_1,...,P_5$ be the five $\bar{k}$-points of $S$.
    \begin{itemize}
        \item[(i)] The exact sequence \eqref{exactsequence} splits as a sequence of $\Gamma$-groups.

        \item[(ii)] $\Aut(X_0) \cong C_2^{n-1} \rtimes H^{\Gamma}$ where $n$ is the number of $\Gamma$-orbits of $\{P_1,...,P_5\}$. 

        \item[(iii)] $A$ and the 16 exceptional divisors of $X_0$ are isomorphic as $\Gamma$-sets.
    \end{itemize}
\end{lem}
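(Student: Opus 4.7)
The plan is to deduce (ii) and (iii) from (i) by taking $\Gamma$-fixed points and using a torsor structure respectively. For (i), I would construct an explicit $\Gamma$-equivariant section of $\rho : \Aut(\overline{X_0}) \to H$. By Proposition \ref{prop:Aut(X)_structure}(ii) and the subsequent remark, $H$ can be identified with the subgroup of $\PGL_2(\bar{k})$ stabilizing $\{P_1, \ldots, P_5\} \subseteq \mathbb{P}^1_{\bar{k}}$. The Veronese embedding $\nu : \mathbb{P}^1_k \hookrightarrow \mathbb{P}^2_k$ is defined over $k$ and intertwines the standard action of $\PGL_2$ on $\mathbb{P}^1$ with the action of $\PGL_2 \hookrightarrow \PGL_3$ on $\mathbb{P}^2$. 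Hence $H$ acts on $\mathbb{P}^2_{\bar{k}}$ preserving $\nu(S) = \{\nu(P_1), \ldots, \nu(P_5)\}$, and by the universal property of the blow-up this action lifts canonically to a subgroup of $\Aut(\overline{X_0})$, yielding a section $s : H \to \Aut(\overline{X_0})$ of $\rho$. Since the Veronese embedding, the inclusion $\PGL_2 \hookrightarrow \PGL_3$, and the blow-up along $\nu(S)$ are all defined over $k$, the section $s$ is $\Gamma$-equivariant.

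For (ii), the splitting from (i) gives an identification of $\Gamma$-groups $\Aut(\overline{X_0}) \cong A \rtimes H$, and taking $\Gamma$-invariants produces $\Aut(X_0) \cong A^\Gamma \rtimes H^\Gamma$. To compute $A^\Gamma$, I would use the short exact sequence of $\Gamma$-modules
\[
0 \to A \to C_2^5 \xrightarrow{\Sigma} C_2 \to 0,
\]
where $\Gamma$ permutes the coordinates of $C_2^5$ according to the orbit decomposition of $\{P_1, \ldots, P_5\}$ and $\Sigma$ is the coordinate sum. The fixed subgroup $(C_2^5)^\Gamma$ consists of the vectors constant on each orbit and has order $2^n$; the induced map to $C_2$ sends the indicator of an orbit of size $|O|$ to $|O| \bmod 2$. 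Because $|O_1| + \cdots + |O_n| = 5$ is odd, at least one orbit has odd size, so this map is surjective, and its kernel $A^\Gamma$ has order $2^{n-1}$. As a subgroup of the elementary abelian $2$-group $A$, it must be isomorphic to $C_2^{n-1}$. For (iii), the discussion after Lemma \ref{lemma:Aaction} observes that $A$ acts simply transitively on the 16 exceptional divisors. Since $X_0$ is quasi-split, some exceptional divisor $D_0$ is $\Gamma$-fixed, and then $g \mapsto g(D_0)$ is a bijection $A \to \{\text{16 lines}\}$ which is $\Gamma$-equivariant because $\sigma(g(D_0)) = \sigma(g)(\sigma(D_0)) = \sigma(g)(D_0)$ for every $\sigma \in \Gamma$.

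The main obstacle is verifying part (i), specifically the $\Gamma$-equivariance of the section $s$; this is exactly where the quasi-split hypothesis is essential, since it guarantees that the five points lie on a $k$-rational conic (the image of Veronese) and thereby lets us realize the action of $H$ geometrically via $\PGL_2$. Once (i) is established, the remaining work in (ii) and (iii) is essentially formal, with the only nontrivial step being the surjectivity of the restriction of $\Sigma$ to $(C_2^5)^\Gamma$, which holds uniformly because $5$ is odd.
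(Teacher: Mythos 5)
Your proof is correct and follows essentially the same route as the paper: the section in (i) is the lift of the $\PGL_2$-action through the Veronese embedding and the blow-up (the paper writes it as $h \mapsto \pi^{-1}\phi(h)\pi$), and (iii) trivializes the torsor of lines at a $\Gamma$-fixed exceptional divisor (the paper uses the conic $C$). The only cosmetic difference is in (ii), where you compute $\lvert A^{\Gamma}\rvert = 2^{n-1}$ via the sequence $0 \to A \to C_2^5 \to C_2 \to 0$ and the oddness of $5$, whereas the paper counts $\Gamma$-stable subsets of $\{c_1,\dots,c_5\}$ two-to-one; these are the same count.
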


\begin{proof} 
(i) We identify $H$ with the subgroup of $\PGL_2(\bar{k})$ stabilizing $P_1,...,P_5$. There is a $\Gamma$-equivariant map $\phi: H \xhookrightarrow{} \PGL_3(\bar{k})$ induced by the Veronese embedding where $\phi(H)$ stabilizes the image of $\{P_1,...,P_5\}$ in $\mathbb{P}_{\bar{k}}^2$. Let $\pi: \overline{X_0} \to \mathbb{P}^2_{\bar{k}}$ be the blow-up map. Since $\{P_1,...,P_5\}$ is $\Gamma$-stable, the rational maps $\pi$ and $\pi^{-1}$ are $\Gamma$-equivariant. We define the splitting to be $\psi: H \to \Aut(\overline{X_0})$ given by $h \mapsto \pi^{-1}\phi(h)\pi$. It is clear that this map is $\Gamma$-equivariant and that $\rho \circ \psi = \id_H$. 

\medskip

\noindent (ii) From (i) we have $\Aut(X_0) \cong A^{\Gamma} \rtimes H^{\Gamma}$. Recall that $A$ is generated by $\{c_1,...,c_5\}$ where $c_i = (a_j)_{j=1}^5$ with $a_j = 0$ when $i = j$ and $a_j = 1$ when $i \neq j$. Moreover, $\Gamma$ permutes $\{c_1,...,c_5\}$ in the same way it permutes $\{P_1,...,P_5\}$. Each element of $A$ can be written as a product of the $c_i$ in exactly two ways. Let $S$ be a subset of $\{c_1,...,c_5\}$. If $g = \prod_{c \in S} c$ then $g = \prod_{c \not\in S} c$. Now $g \in A$ is in $A^{\Gamma}$ if and only if $g = \prod_{c \in S} c$ where $S$ is $\Gamma$-stable. Of course, $S$ is $\Gamma$-stable if and only if $S$ is the union of $\Gamma$-orbits in $\{c_1,...,c_5\}$. Let $n$ be the number of $\Gamma$-orbits of $\{c_1,...,c_5\}$. Then there are $2^n$ $\Gamma$-stable subsets of $\{c_1,...,c_5\}$. Each $g \in A^{\Gamma}$ corresponds to exactly two $\Gamma$-stable subsets of $\{c_1,...,c_5\}$, so $A^{\Gamma} \cong C_2^{n-1}$.

\medskip

\noindent (iii) We described the action of $A$ on the exceptional divisors in Lemma \ref{lemma:Aaction}. Since $X_0$ is quasi-split, $C$ is fixed by the $\Gamma$-action on $\Pic \overline{X_0}$, and the exceptional divisors form a trivial $A$-torsor. Consequently, $A$ is isomorphic to the 16 exceptional divisors via the map $g \mapsto g(C)$.
\end{proof}

\begin{remark}
\label{Remark:A}
    In part (ii) of Lemma \ref{Lemma:Moving}, we did not need to assume that $X$ was quasi-split to show that $A^{\Gamma} \cong C_2^{n-1}$ where $n$ is the number of $\Gamma$-orbits in $\{P_1,...,P_5\}$. This fact holds for non-quasi-split surfaces as well.
\end{remark}

As a result of the previous lemma, to classify the automorphisms of quasi-split surfaces it suffices to analyze reduced degree five subschemes $S$ of $\mathbb{P}^1_k$, the possible $\Gamma$-actions on $S$, and the subgroup of $\PGL_2(k)$ leaving $S$ invariant. We produce examples of quasi-split surfaces over various fields and calculate $\Aut(X_0)$. We will later prove that each of the constructions produces a group that is maximal in $\mathcal{M}_k$, the collection of subgroups in $W(\mathsf{D}_5)$ acting on a quartic del Pezzo surface over $k$, for some field $k$. We first label the following elements of $\PGL_2(\bar{k})$:
\begin{equation*}
    h_2 = \begin{bmatrix} 0 & 1 \\ 1 & 0 \end{bmatrix}; \quad h_3 = \begin{bmatrix} 0 & -1 \\ 1 & -1 \end{bmatrix}; \quad h_4 = \begin{bmatrix} 1 & 0 \\ 0 & i \end{bmatrix}; \quad h_5 = \begin{bmatrix} 1 & 1 \\ -1 & \phi \end{bmatrix}
\end{equation*}
where $\phi = \dfrac{1 + \sqrt{5}}{2}$. Notice that $h_n$ is an element of order $n$ in $\PGL_2(\bar{k})$. In each of the following examples, $X_0$ will denote the quasi-split surface corresponding to the subscheme $S$ of $\mathbb{P}^1_k$.

\begin{example}
\label{example:C2}
    Let $k$ be an arbitrary field. Let $H = \langle h_2 \rangle$. Then $H$ stabilizes any set of five points in $\mathbb{P}^1_{\bar{k}}$ of the form
    \begin{equation*}
        S = \{(a:b), (b:a), (1:1), (c:d), (d:c)\}
    \end{equation*}
    We can choose $a,b,c,d \in k$ so that the $\Gamma$-action on $S$ is trivial. For a general choice of $(a:b)$ and $(c:d)$, the group $H$ is the full subgroup of $\PGL_2(k)$ stabilizing $S$. In that case, the image of $\Aut(X_0)$ in $W(\mathsf{D}_5)$ is conjugate to $C_2^4 \rtimes \langle (12)(45) \rangle$.
\end{example}

\begin{example}
\label{example:S3}
    Let $H = \langle h_2,h_3 \rangle \cong S_3$. Then $H$ stabilizes
    \begin{equation*}
        S = \{(1:0), (0:1), (1:1), (-\epsilon_3:1), (1:-\epsilon_3)\}
    \end{equation*}
    If $\epsilon_3 \in k$, then the $\Gamma$-action fixes each point of $S$ and $\Aut(X_0) = C_2^4 \rtimes \langle (12)(45), (123) \rangle$. If $\epsilon_3 \not\in k$, then the $\Gamma$-action fixes the first three points and swaps $(-\epsilon_3 :1)$ with $(1:-\epsilon_3)$, so $\Aut(X_0) = \langle c_1,c_2,c_3 \rangle \rtimes \langle (12)(45), (123) \rangle$.
\end{example}

\begin{example}
\label{example:C4}
    Suppose $i \in k$. Let $H = \langle h_4 \rangle \cong C_4$. Then $H$ stabilizes
    \begin{equation*}
        S = \{ (a:b), (a:-b), (1:0), (a:ib), (a:-ib)\}
    \end{equation*}
    for any $a,b \in k$. Since $i \in k$, the $\Gamma$-action on $S$ is trivial and $\Aut(X_0) = C_2^4 \rtimes \langle (1425) \rangle$.
\end{example}

\begin{example}
\label{example:D5}
    Suppose $\sqrt{5} \in k$. Let $H = \langle h_2, h_5 \rangle \cong D_5$. Then $H$ stabilizes
    \begin{equation*}
        S = \{ (\phi:1), (1:\phi), (1:-1), (0:1), (1:0)\}
    \end{equation*}
    Since $\phi \in k$, the $\Gamma$-action on $S$ is trivial and $\Aut(X_0) = C_2^4 \rtimes \langle (12)(45), (15342) \rangle$.
\end{example}

\noindent The previous examples construct groups in $\mathcal{M}_k$ for various $k$. We would also like to determine necessary conditions on $k$ for various groups in $W(\mathsf{D}_5)$ to be in $\mathcal{M}_k$. To obtain some necessary conditions, we will use the following fact taken from Proposition 1.1 and Theorem 4.2 of \cite{Beauville}.

\begin{prop}
\label{Prop:Beauville}
\,
\begin{itemize}
    \item[(i)] $\PGL_2(k)$ contains $C_n$ and $D_n$ if and only if $k$ contains $\epsilon_n + \epsilon_n^{-1}$.

    \item[(ii)] Let $n > 2$. If $\PGL_2(k)$ contains a subgroup isomorphic to $C_n$ then it is unique up to conjugacy.
\end{itemize}
\end{prop}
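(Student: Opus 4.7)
The plan is to exploit the $\PGL_2$-invariant $I(g) = \operatorname{tr}(\tilde g)^2/\det(\tilde g)$, which is well-defined for any lift $\tilde g \in \GL_2(k)$ of $g \in \PGL_2(k)$ since it is homogeneous of degree $0$ in $\tilde g$. Part (i) follows by computing this invariant and constructing an explicit matrix, and part (ii) follows by normalizing lifts to have identical characteristic polynomials and then applying rational canonical form.

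For part (i), in the forward direction I lift an order-$n$ element $g \in \PGL_2(k)$ to $\tilde g \in \GL_2(k)$ whose $\bar k$-eigenvalues $\alpha, \beta$ satisfy $\alpha/\beta = \epsilon_n^j$ for some $j$ coprime to $n$. Then $I(g) = (\alpha+\beta)^2/(\alpha\beta) = 2 + \epsilon_n^j + \epsilon_n^{-j}$ lies in $k$, and since $k(\epsilon_n^j + \epsilon_n^{-j}) = k(\epsilon_n + \epsilon_n^{-1})$ for any such $j$, this forces $\epsilon_n + \epsilon_n^{-1} \in k$. For the converse, setting $t = 2 + \epsilon_n + \epsilon_n^{-1} \in k$, the companion matrix $A$ of $x^2 - tx + t$ has eigenvalues $1+\epsilon_n, 1+\epsilon_n^{-1}$ with ratio $\epsilon_n$, so its image in $\PGL_2(k)$ has order exactly $n$. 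For $D_n$, the forward implication follows from $C_n \subseteq D_n$; for the reverse, one takes the involution $B \in \PGL_2(k)$ defined by the M\"obius transformation $z \mapsto -t - z$ (which swaps the two geometric fixed points $-(1+\epsilon_n)$ and $-(1+\epsilon_n^{-1})$ of $A$) and checks directly that $BAB = A^{-1}$ in $\PGL_2$, so $\langle A, B \rangle \cong D_n$.

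For part (ii), given two cyclic subgroups $\langle g \rangle$ and $\langle g' \rangle$ of order $n > 2$, I first replace $g'$ by a suitable power $(g')^s$ with $\gcd(s,n)=1$ so that lifts $\tilde g, \tilde{g'}$ have the same eigenvalue ratio, matching the values of $I$. Because $n > 2$ the ratio is not $-1$, so both traces are nonzero, and rescaling the lifts equalizes both trace and determinant. The common characteristic polynomial has distinct roots in $\bar k$, so the lifts are $\GL_2(k)$-conjugate by rational canonical form, and projecting to $\PGL_2(k)$ sends $\langle g \rangle$ to $\langle g' \rangle$. The main obstacle is the bookkeeping here: one must simultaneously exploit the $(\mathbb{Z}/n)^*$-worth of generators and the $k^*$-worth of lifts. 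The hypothesis $n > 2$ is essential at the trace-normalization step, since $n=2$ forces $\operatorname{tr}(\tilde g)=0$ and leaves $\det(\tilde g) \in k^*/k^{*2}$ as a genuine invariant, giving non-conjugate involutions.
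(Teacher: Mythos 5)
Your argument is correct, but note that the paper does not prove this proposition at all: it is imported verbatim as Proposition 1.1 and Theorem 4.2 of the cited paper of Beauville (\emph{Finite subgroups of} $\PGL_2(K)$), so there is no in-paper proof to match. What you have written is a self-contained elementary reconstruction, and it is sound: the invariant $\operatorname{tr}(\tilde g)^2/\det(\tilde g)$ is exactly the right thing to extract $\epsilon_n^j+\epsilon_n^{-j}$ from an order-$n$ element (the eigenvalue ratio of a lift is a primitive $n$-th root of unity because $\tilde g^n$ is a nonzero scalar and hence $\tilde g$ is semisimple in characteristic zero), the field identity $\mathbb{Q}(\epsilon_n^j+\epsilon_n^{-j})=\mathbb{Q}(\epsilon_n+\epsilon_n^{-1})$ for $\gcd(j,n)=1$ legitimately upgrades this to the stated condition, the companion matrix of $x^2-tx+t$ with $t=2+\epsilon_n+\epsilon_n^{-1}$ does have eigenvalue ratio $\epsilon_n$, and the involution $z\mapsto -t-z$ does satisfy $BAB=A^{-1}$. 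Your part (ii) is also the standard argument and is complete: matching generators via a power, normalizing trace and determinant (using $n>2$ to guarantee nonzero trace), and invoking rational canonical form for a separable characteristic polynomial. This is close in spirit to Beauville's own treatment, which likewise reduces everything to the characteristic polynomial of a lift. The only loose end is the degenerate case $n=2$ in part (i): there $t=0$, the companion matrix of $x^2$ is singular, and your construction breaks down; the claim is vacuously fine there since $\epsilon_2+\epsilon_2^{-1}=-2$ lies in every field and $\operatorname{diag}(1,-1)$ together with the antidiagonal involution already give $C_2$ and $D_2$ in $\PGL_2(k)$, but you should say so explicitly (and your own remark at the end of part (ii) shows you already understand why $n=2$ is special). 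For the purposes of this paper the proposition is only ever applied with $n\in\{3,4,5\}$, so nothing downstream is affected.
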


\begin{lem}
\label{Lemma:Pre-quasi}
    Let $k$ be a field of characteristic zero. Let $X$ be a del Pezzo surface of degree 4 over $k$ with an exact sequence
    \begin{equation*}
        1 \to A \to \Aut(\overline{X}) \xrightarrow{\rho} H \to 1
    \end{equation*}
    of $\Gamma$-groups.
    \begin{itemize}
        \item[(i)] If $H^{\Gamma}$ contains an element of order 4, then $i \in k$.
        \item[(ii)] If $H^{\Gamma}$ contains an element of order 5, then $\sqrt{5} \in k$.
        \item[(iii)] If $H^{\Gamma}$ contains an element of order 3 and $A^{\Gamma} \cong C_2^4$, then $\epsilon_3 \in k$.
        \item[(iv)] If $H^{\Gamma}$ contains an element acting on the five points of $S_X$ as $(123)$, then $A^{\Gamma}$ is a subgroup of $\langle c_1,c_2,c_3 \rangle$.
    \end{itemize}
\end{lem}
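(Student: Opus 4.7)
The unifying strategy is the identification of $H$ with the subgroup of $\PGL_2(\bar k)$ stabilizing the five $\bar k$-points of $S_X$; under this identification, $H^\Gamma$ is precisely $H \cap \PGL_2(k)$. Parts (i)--(iii) then reduce to studying the $\mathbb{P}^1_{\bar k}$-fixed points of an order-$n$ element of $\PGL_2(k)$, while (iv) becomes a combinatorial analysis of the $\Gamma$-orbit structure on $\{1,\dots,5\}$.

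For (i), an order-$4$ element $\sigma \in H^\Gamma$ has cycle type $(4,1)$ on $S_X$, so it has a unique $S_X$-fixed point $P$. Being the only fixed point of a $k$-rational automorphism acting on the $\Gamma$-stable set $S_X$, $P$ is itself $\Gamma$-stable and lies in $\mathbb{P}^1(k)$. Placing $P$ at $(0:1)$ makes $\sigma$ lower-triangular with entries in $k$, so its other $\mathbb{P}^1$-fixed point is $k$-rational as well. Thus $\sigma$ diagonalizes over $k$ as $\mathrm{diag}(\alpha,\beta)$, and the order-$4$ condition forces $\alpha/\beta = \pm i \in k$. Part (iii) runs in parallel: $A^\Gamma \cong C_2^4$ forces five singleton $\Gamma$-orbits by Remark \ref{Remark:A}, so all $P_i \in \mathbb{P}^1(k)$; the two $\mathbb{P}^1$-fixed points of any order-$3$ element of $H^\Gamma$ are among the $P_i$, hence $k$-rational, so $\sigma$ is $k$-diagonalizable with eigenvalue ratio $\epsilon_3^{\pm 1}$. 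Part (ii) is immediate from Proposition \ref{Prop:Beauville}(i): an order-$5$ element of $H^\Gamma$ generates $C_5 \subseteq \PGL_2(k)$, forcing $\epsilon_5 + \epsilon_5^{-1} = (\sqrt{5}-1)/2 \in k$, i.e.\ $\sqrt{5} \in k$.

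For (iv), since $\Gamma$ acts on $H \subseteq S_5$ by conjugation through the image $\pi_\Gamma(\Gamma) \subseteq S_5$, the hypothesis $\sigma = (123) \in H^\Gamma$ forces $\pi_\Gamma(\Gamma) \subseteq C_{S_5}((123)) = \langle (123),(45)\rangle$, so every $\Gamma$-orbit on $\{1,\dots,5\}$ is contained in $\{1,2,3\}$ or in $\{4,5\}$. Identifying elements of $A$ with their supports (even-cardinality subsets of $\{1,\dots,5\}$), the subgroup $\langle c_1,c_2,c_3\rangle$ is exactly the set of elements whose support $I$ satisfies $I \cap \{4,5\} \in \{\emptyset,\{4,5\}\}$. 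For a $\Gamma$-stable $I$ this is automatic whenever $\{4,5\}$ forms a single $\Gamma$-orbit, yielding $A^\Gamma \subseteq \langle c_1,c_2,c_3\rangle$. The main obstacle is the residual case where $P_4$ and $P_5$ are separately $\Gamma$-fixed: then both are $k$-rational, the diagonalization of (iii) applies to $\sigma$, and one must exploit the resulting arithmetic structure in $\PGL_2(k)$ together with the relation $c_1c_2c_3 = c_4c_5$ in $A$ to force the remaining candidate elements of $A^\Gamma$ into $\langle c_1,c_2,c_3\rangle$.
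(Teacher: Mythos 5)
Your arguments for (i)--(iii) are correct, and for (i) and (iii) they are genuinely more direct than the paper's. The paper argues by contraposition: it uses Proposition \ref{Prop:Beauville}(ii) to reduce to an explicit representative ($h_4'$, resp.\ $h_3$) whose two fixed points on $\mathbb{P}^1_{\bar k}$ form a nontrivial $\Gamma$-orbit when $i$ (resp.\ $\epsilon_3$) is missing from $k$, and then derives a contradiction from the cycle type of the element on the $\Gamma$-stable set $S_X$. You instead show directly that the relevant fixed points must be $k$-rational, diagonalize the element over $k$, and read off the eigenvalue ratio. Both routes work; yours avoids the uniqueness statement of Proposition \ref{Prop:Beauville}(ii) for (i) and (iii), and your cycle-type bookkeeping (exactly one fixed point among the five for order $4$, exactly two for order $3$) is sound.

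The gap is in (iv). You correctly reduce to two cases according to whether $\{P_4,P_5\}$ is a single $\Gamma$-orbit or $P_4$ and $P_5$ are separately $\Gamma$-fixed, and you dispose of the first case completely (this is exactly the paper's argument). But for the ``residual case'' you only assert that one must ``exploit the resulting arithmetic structure \dots to force the remaining candidate elements of $A^\Gamma$ into $\langle c_1,c_2,c_3\rangle$.'' No such argument exists, because in that case the conclusion is false: if $\{4\}$ is a singleton $\Gamma$-orbit, the support $\{1,2,3,5\}$ of $c_4$ is $\Gamma$-stable, so $c_4\in A^\Gamma$ while $c_4\notin\langle c_1,c_2,c_3\rangle$. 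What actually closes this case is your own computation from (iii): if $P_4$ and $P_5$ were separately $\Gamma$-fixed they would be $k$-rational, the order-$3$ element would diagonalize over $k$ with eigenvalue ratio $\epsilon_3^{\pm1}$, and hence $\epsilon_3\in k$. The paper's proof of (iv), and every subsequent application of it, carries the standing hypothesis $\epsilon_3\notin k$; under that hypothesis the two fixed points of the order-$3$ element are Galois conjugate, so $\{P_4,P_5\}$ is forced to be a single $\Gamma$-orbit and only your first case occurs. As literally stated, without that hypothesis, (iv) fails (take all five points $k$-rational with $\epsilon_3\in k$, so that $A^\Gamma\cong C_2^4$). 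So the residual case must be \emph{eliminated} by invoking $\epsilon_3\notin k$, not resolved by further computation inside $A$.
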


\begin{proof}
We identify $H$ with the subgroup of $\PGL_2(\bar{k})$ stabilizing the five $\bar{k}$-points $\{P_1,...,P_5\}$ of $S_X$. Note that $H^{\Gamma}$ is a subgroup of $\PGL_2(k)$.

\medskip

\noindent (i) Suppose $i \not\in k$. By Proposition \ref{Prop:Beauville}, there is a unique subgroup of order 4 in $\PGL_2(k)$ up to conjugacy. We pick a representative to be generated by
\begin{equation*}
    h_4' = \begin{bmatrix} 1 & 1 \\ -1 & 1 \end{bmatrix} \in \PGL_2(k).
\end{equation*}
Notice that $h_4'$ is conjugate to $h_4$ when $i \in k$. The fixed points of $h_4'$ are $(1:-i)$ and $(-i:1)$. Since $i \not\in k$, $\{(1:-i), (-i:1)\}$ forms a $\Gamma$-orbit of $\mathbb{P}^1_{\bar{k}}$. The fixed points of any element conjugate to $h_4'$ in $\PGL_2(k)$ are also not $\Gamma$-fixed. Suppose we have five distinct $\bar{k}$-points stabilized by $\Gamma$ and $h_4'$. Then exactly one of the points is fixed by $h_4'$. But for the set to be $\Gamma$-stable, it must contain the second fixed point of $h_4'$, a contradiction. So if $H^{\Gamma} \cong C_4$ then $k$ must contain $i$.

\medskip

\noindent (ii) By Proposition \ref{Prop:Beauville}, $\PGL_2(k)$ contains an element of order 5 if and only if $\epsilon_5 + \epsilon_5^{-1} \in k$. Of course, $\epsilon_5 + \epsilon_5^{-1} \in k$ if and only if $\sqrt{5} \in k$.

\medskip

\noindent (iii) Suppose $\epsilon_3 \not\in k$. By Proposition \ref{Prop:Beauville}, there is a unique subgroup of order 3 in $\PGL_2(k)$ up to conjugacy. Without loss of generality, we may assume the group is generated by $h_3$.
The fixed points of $h_3$ are $(1:-\epsilon_3)$ and $(-\epsilon_3:1)$. Since $\epsilon_3 \not\in k$, we know $\{(1:-\epsilon_3), (-\epsilon_3:1)\}$ forms a $\Gamma$-orbit of $\mathbb{P}^1_{\bar{k}}$. Any $\Gamma$-stable set of five $\bar{k}$-points must contain both fixed points, and the fixed points of any conjugate element of $\PGL_2(k)$ will also not be $\Gamma$-fixed. Therefore, there are at most four $\Gamma$-orbits in $\{P_1,...,P_5\}$. By Lemma \ref{Lemma:Moving} and Remark \ref{Remark:A}, we conclude that $A^{\Gamma} \not\cong C_2^4$.

\medskip

\noindent (iv) By (iii), if $H^{\Gamma}$ contains an element of order 3 and $\epsilon_3 \not\in k$, then we can assume $\{P_4,P_5\}$ forms a $\Gamma$-orbit in $\{P_1,...,P_5\}$. Therefore, $c_4$ and $c_5$ are not in $A^{\Gamma}$ and $A^{\Gamma} \subseteq \langle c_1,c_2,c_3 \rangle$.
\end{proof}

We let $\mathcal{M}_k^{qs}$ denote the collection of subgroups, up to conjugacy, in $W(\mathsf{D}_5)$ that act by automorphisms on a quasi-split del Pezzo surface of degree 4 over $k$. At this point we are able to determine the structure of $\mathcal{M}_k^{qs}$ for any field $k$ of characteristic zero.

\begin{thm}
    \label{Theorem:Quasi-split}
    Let $k$ be a field of characteristic zero. If $G$ is a maximal group in $\mathcal{M}_k^{qs}$, then $G$ is one of the groups in the table. Each group appears in $\mathcal{M}_k^{qs}$ if and only if $k$ satisfies the condition in the second column.
    \begin{center}
    \begin{tabular}{|c|c|}
        \hline
        \textbf{Group} & \textbf{Condition on } $k$ \\
        \hline
        $C_2^4 \rtimes C_4$ & $i \in k$ \\
        \hline
        $C_2^4 \rtimes S_3$ & $\epsilon_3 \in k$ \\
        \hline
        $C_2^4 \rtimes D_5$ & $\sqrt{5} \in k$ \\
        \hline
        $C_2^4 \rtimes C_2$ & none \\
        \hline
        $C_2^3 \rtimes S_3$ & none \\
        \hline
    \end{tabular}
    \end{center}
\end{thm}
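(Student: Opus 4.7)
The plan is to combine the realizations in Examples 2.7–2.10 with the obstructions in Lemma 2.12, then check that no maximal element of $\mathcal{M}_k^{qs}$ lies outside the five listed groups.

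\textbf{Realizability.} By Lemma \ref{Lemma:Moving}, for any quasi-split $X_0$ we have $\Aut(X_0) \cong C_2^{n-1} \rtimes H^{\Gamma}$, where $n$ is the number of $\Gamma$-orbits in $\{P_1,\ldots,P_5\}$. Each group in the table arises as $\Aut(X_0)$ for an explicit $X_0$: Example \ref{example:C2} yields $C_2^4 \rtimes C_2$ over any $k$; Example \ref{example:C4} yields $C_2^4 \rtimes C_4$ when $i \in k$; Example \ref{example:D5} yields $C_2^4 \rtimes D_5$ when $\sqrt{5} \in k$; and Example \ref{example:S3} yields $C_2^4 \rtimes S_3$ when $\epsilon_3 \in k$ and $C_2^3 \rtimes S_3$ otherwise. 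In the last case, the Galois action swaps $(-\epsilon_3:1)$ and $(1:-\epsilon_3)$ (i.e.\ $P_4 \leftrightarrow P_5$), and a direct check shows $A^{\Gamma}$ is precisely $\langle c_1,c_2,c_3\rangle$, matching the representative of the conjugacy class in Table \ref{table:classes}.

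\textbf{Necessity.} Suppose a group $G$ in the table lies in $\mathcal{M}_k^{qs}$; then $G \subseteq \Aut(X_0) = A^{\Gamma} \rtimes H^{\Gamma}$ for some quasi-split $X_0$. If $G = C_2^4 \rtimes C_4$, then $H^{\Gamma}$ contains an order-$4$ element, so by Lemma \ref{Lemma:Pre-quasi}(i), $i \in k$. If $G = C_2^4 \rtimes D_5$, then $H^{\Gamma}$ contains an order-$5$ element, so Lemma \ref{Lemma:Pre-quasi}(ii) gives $\sqrt{5} \in k$. If $G = C_2^4 \rtimes S_3$, then $H^{\Gamma}$ contains an order-$3$ element and $A^{\Gamma} = C_2^4$, so Lemma \ref{Lemma:Pre-quasi}(iii) forces $\epsilon_3 \in k$.

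\textbf{Exhaustiveness.} Let $G$ be maximal in $\mathcal{M}_k^{qs}$, realized as $\Aut(X_0) = C_2^{n-1}\rtimes H^{\Gamma}$ for some quasi-split $X_0$. The subgroup $H^{\Gamma} \subseteq \PGL_2(k)$ stabilizes five points, so $H^{\Gamma}$ is one of $\{1, C_2, C_3, C_4, C_5, S_3, D_5\}$ by the classification following Proposition \ref{prop:Aut(X)_structure}. If $H^{\Gamma} \supseteq C_5$ then $G \subseteq C_2^4 \rtimes D_5$; if $H^{\Gamma} \supseteq C_4$ then $G \subseteq C_2^4 \rtimes C_4$; if $H^{\Gamma} \supseteq C_3$ then either $\epsilon_3 \in k$ and $G \subseteq C_2^4 \rtimes S_3$, or $\epsilon_3 \notin k$ and by Lemma \ref{Lemma:Pre-quasi}(iv) we get $A^{\Gamma} \subseteq \langle c_1,c_2,c_3\rangle$, hence $G \subseteq C_2^3 \rtimes S_3$; and if $|H^{\Gamma}| \le 2$ then $G \subseteq C_2^4 \rtimes C_2$. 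In every case $G$ is contained in one of the five listed groups; maximality forces equality.

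\textbf{Main obstacle.} The step with the most bookkeeping is verifying exhaustiveness in the borderline case $H^{\Gamma} = S_3$ with $\epsilon_3 \notin k$: one must show that $A^{\Gamma}$ is \emph{conjugate} in $W(\mathsf{D}_5)$ to the standard $\langle c_1,c_2,c_3\rangle$, not merely abstractly $C_2^3$. This uses Lemma \ref{Lemma:Pre-quasi}(iv) to locate the non-singleton $\Gamma$-orbit on $\{P_1,\ldots,P_5\}$ at the pair of fixed points of the order-$3$ element, together with the direct coordinate computation in $C_2^4$ above.
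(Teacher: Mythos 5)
Your proposal is correct and rests on exactly the same ingredients as the paper's proof: the realizations come from Examples \ref{example:C2}--\ref{example:D5}, the necessity of the field conditions from Lemma \ref{Lemma:Pre-quasi}, and the splitting $\Aut(X_0) \cong A^{\Gamma} \rtimes H^{\Gamma}$ from Lemma \ref{Lemma:Moving}. The only difference is organizational: the paper runs the exhaustiveness check as eight cases on which of $i, \epsilon_3, \sqrt{5}$ lie in $k$, whereas you case-split on the isomorphism type of $H^{\Gamma}$; this yields the same conclusion and is, if anything, a slightly more direct way to see that every maximal group lands in the table.
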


\begin{proof}
We first show that each group in the table appears in $\mathcal{M}_k^{qs}$ if and only if the condition on $k$ is satisfied. The following statements are easy consequences of Lemma \ref{Lemma:Pre-quasi}:
\begin{itemize}
    \item[(i)] If $C_2^4 \rtimes C_4 \in \mathcal{M}_k$, then $i \in k$.
    \item[(ii)] If $C_2^4 \rtimes S_3 \in \mathcal{M}_k$, then $\epsilon_3 \in k$.
    \item[(iii)] If $C_2^4 \rtimes D_5 \in \mathcal{M}_k$, then $\sqrt{5} \in k$.
\end{itemize}
For the reverse direction, Examples \ref{example:C4}, \ref{example:S3}, \ref{example:D5}, and \ref{example:C2} produce quasi-split surfaces with $\Aut(X_0)$ equal to the groups in the left column under the appropriate conditions.

\medskip

\noindent For each $k$, we use the constructions in the Examples and Lemma \ref{Lemma:Pre-quasi} to determine the maximal groups in $\mathcal{M}_k^{qs}$. Recall that the maximal groups in $\mathcal{M}_{\bar{k}}$ are $C_2^4 \rtimes C_4$, $C_2^4 \rtimes S_3$, and $C_2^4 \rtimes D_5$. We break into cases:
\begin{itemize}
    \item[Case 1:] Suppose $i,\epsilon_3,\sqrt{5} \in k$. The maximal groups are $C_2^4 \rtimes C_4$, $C_2^4 \rtimes S_3$, and $C_2^4 \rtimes D_5$. 

    \item[Case 2:] Suppose $\epsilon_3,\sqrt{5} \in k$ and $i \not\in k$. Then $C_2^4 \rtimes S_3$ and $C_2^4 \rtimes D_5$ are maximal. Any other maximal group would be contained in $C_2^4 \rtimes C_4$. By Lemma \ref{Lemma:Pre-quasi}, such a group would need to be contained in $C_2^4 \rtimes C_2$, which is already in $C_2^4 \rtimes S_3$.

    \item[Case 3:] Suppose $i,\sqrt{5} \in k$ and $\epsilon_3 \not\in k$. Then $C_2^4 \rtimes C_4$ and $C_2^4 \rtimes D_5$ are maximal. Any other maximal group would be contained in $C_2^4 \rtimes S_3$. By Lemma \ref{Lemma:Pre-quasi}, such a group is contained in $\langle c_1,c_2,c_3 \rangle \rtimes \langle (12)(45), (123) \rangle$. So $C_2^3 \rtimes S_3$ is the only other maximal group.

    \item[Case 4:] Suppose $i, \epsilon_3 \in k$ and $\sqrt{5} \not\in k$. Then $C_2^4 \rtimes C_4$ and $C_2^4 \rtimes S_3$ are maximal. Any other maximal group would be contained in $C_2^4 \rtimes D_5$. By Lemma \ref{Lemma:Pre-quasi}, such a group is contained in $C_2^4 \rtimes C_2$ which is already contained in $C_2^4 \rtimes S_3$.
    
    \item[Case 5:] Suppose $i \in k$ and $\epsilon_3, \sqrt{5} \not\in k$. Then $C_2^4 \rtimes C_4$ and $C_2^3 \rtimes S_3$ are the maximal groups.

    \item[Case 6:] Suppose $\epsilon_3 \in k$ and $i,\sqrt{5} \not\in k$. Then $C_2^4 \rtimes S_3$ is the only maximal group.

    \item[Case 7:] Suppose $\sqrt{5} \in k$ and $i,\epsilon_3 \not\in k$. Then $C_2^4 \rtimes D_5$ and $C_2^3 \rtimes S_3$ are the maximal groups.

    \item[Case 8:] Suppose $i,\epsilon_3,\sqrt{5} \not\in k$. Then $C_2^4 \rtimes C_2$ and $C_2^3 \rtimes S_3$ are the maximal groups.
\end{itemize}

\noindent Therefore, for any $k$ the maximal groups in $\mathcal{M}_k^{qs}$ are a subset of the groups listed in the table.
\end{proof}

\begin{remark}
    After fixing a field $k$ of characteristic zero, $\mathcal{M}_k^{qs}$ is precisely the downward closure of the groups in the table that are in $\mathcal{M}_k^{qs}$. 
\end{remark}

\section{Twists of Quasi-Split Surfaces} 

At this point, we would like to extend the classification to \textit{any} del Pezzo surface of degree 4. Let $X$ be a del Pezzo surface of degree 4 over $k$. Recall that $A$ is the kernel of the map from $\Aut(\overline{X})$ to $S_5$ induced by the action of $\Aut(\overline{X})$ on the exceptional pairs of $\Pic \overline{X}$. Since $X$ is a projective variety with an action of $A$, we can twist $X$ by an element of $H^1(\bar{k}/k, A)$ (see I.5 and III.1 of \cite{Serre} for details). Recall that the classes of $H^1(\bar{k}/k,A)$ are in bijection with isomorphism classes of $A$-torsors. We showed in Lemma \ref{lemma:Aaction} that the 16 exceptional divisors of $X$ form an $A$-torsor. If $\tau$ is an $A$-torsor, the twist of $X$ by $\tau$ --- denoted ${}^\tau X$ --- is defined to be the quotient of $\tau \times_k X$ by the diagonal action of $A$. Note that the surface $X$ is quasi-split precisely when the torsor of lines has a $k$-point. We can also describe the twisting operation with cocycles. For $(a_{\gamma}) \in Z^1(\bar{k}/k, A)$, we can define a new $\Gamma$ action on $\overline{X}$ by combining $a_{\gamma}$ with the automorphisms $1 \otimes \gamma$ of $\overline{X}$. The twist of $X$ by $(a_{\gamma})$ --- denoted ${}^a X$ --- is the quotient of $\overline{X}$ by the new $\Gamma$-action. The twist will be a $\bar{k}/k$-form of $X$. The following result is due to A. Skorobogatov \cite{Skorobogatov}:

\begin{prop}
\label{Prop:Skorobogatov}
    Let $X$ be a del Pezzo surface of degree 4, and let $S_X$ be the attached reduced degree five subscheme of $\mathbb{P}^1_k$. Let $X_0$ be the blow-up of $\mathbb{P}^2_k$ in the image of $S_X$ under the Veronese embedding. Then $X_0$ is
    \begin{itemize}
        \item[(a)] the unique (up to isomorphism) quasi-split twist of $X$ by a $k$-torsor under $A$;

        \item[(b)] the unique (up to isomorphism) quasi-split del Pezzo surface of degree 4 such that $S_X$ and $S_{X_0}$ are projectively equivalent as subschemes of $\mathbb{P}^1_k$.
    \end{itemize}
\end{prop}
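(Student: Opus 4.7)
The plan is to reduce both parts of the proposition to two observations: (i) the attached subscheme $S_X \subseteq \mathbb{P}^1_k$ is invariant under twisting by $A$-torsors, and (ii) by the earlier Skorobogatov lemma, any quasi-split del Pezzo surface of degree $4$ is determined up to isomorphism by the projective equivalence class of its attached subscheme. Parts (a) and (b) then both reduce to matching these invariants.

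First I would establish (i). Working over $\bar{k}$, choose coordinates on $\mathbb{P}^4_{\bar{k}}$ in which $\overline{X}$ is cut out by $q_1 = \sum_{i=0}^4 x_i^2$ and $q_2 = \sum_{i=0}^4 \alpha_i x_i^2$. In these coordinates $A \subseteq \Aut(\overline{X})$ is generated by the sign involutions $x_i \mapsto -x_i$, each of which fixes every member of the pencil $\mu q_1 + \lambda q_2$. Hence $A$ acts trivially on the parameter $\mathbb{P}^1_{\bar{k}}$ of the pencil. For any cocycle $(a_\gamma) \in Z^1(\Gamma,A)$, the twisted Galois action $\gamma \mapsto a_\gamma \circ (1 \otimes \gamma)$ agrees with the original Galois action on this $\mathbb{P}^1$, and since the discriminant divisor of the pencil is a $\Gamma$-equivariant construction, $S_{{}^a X}$ and $S_X$ coincide as $k$-subschemes of $\mathbb{P}^1_k$.

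Next I would realize $X_0$ itself as a quasi-split twist. By Lemma \ref{lemma:Aaction} and the transitivity observation following it, the set of $16$ exceptional divisors on $\overline{X}$ is an $A$-torsor $\tau$. Twisting $X$ by $\tau$ produces ${}^\tau X$, whose own torsor of lines is the self-twist ${}^\tau\tau$, which carries a canonical $k$-point via the diagonal and is therefore trivial. Thus some line on ${}^\tau X$ is defined over $k$, so ${}^\tau X$ is quasi-split. Combined with (i), $S_{{}^\tau X} = S_X$, so Skorobogatov's lemma identifies ${}^\tau X$ with the blow-up of $\mathbb{P}^2_k$ at the Veronese image of $S_X$, which is precisely $X_0$.

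Both uniqueness claims then drop out. For (a): any quasi-split twist ${}^\sigma X$ satisfies $S_{{}^\sigma X} = S_X$ by (i), and Skorobogatov's lemma gives ${}^\sigma X \cong X_0$. For (b): any quasi-split $Y$ with $S_Y$ projectively equivalent to $S_X$ is, by the same lemma, the blow-up of $\mathbb{P}^2_k$ at the Veronese image of $S_Y$, hence isomorphic to $X_0$. I expect the main technical point to be step (i): making the triviality of the $A$-action on the base $\mathbb{P}^1$ of the pencil rigorous when the simultaneous diagonalization is only guaranteed over $\bar{k}$, and ensuring that the identification $S_{{}^a X} = S_X$ descends from a statement about $\bar{k}$-morphisms to one about $k$-subschemes of $\mathbb{P}^1_k$.
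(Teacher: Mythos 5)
The paper does not prove this proposition at all: it is quoted verbatim from Skorobogatov's paper and used as a black box, so there is no in-text argument to compare yours against. Taken on its own terms, your proof is essentially correct and is a reasonable reconstruction of how the result is actually established. The two pillars are exactly right: (i) $A$ is realized by the diagonal sign involutions $x_i \mapsto -x_i$, which fix every quadric in the pencil, so twisting by a cocycle valued in $A$ leaves the $\Gamma$-action on the parameter $\mathbb{P}^1_k$ of the pencil, and hence the subscheme $S_X$, unchanged; and (ii) the quoted lemma of Skorobogatov pins down a quasi-split surface up to isomorphism by the projective equivalence class of its attached subscheme, which makes both uniqueness statements formal once (i) is in place. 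Existence in (a) via twisting by the torsor of the $16$ lines is the standard fact that twisting an object by one of its own torsors trivializes that torsor (Serre, I.5.3); your "diagonal" description of the canonical $k$-point of ${}^\tau\tau$ is the correct mechanism, and since $A$ has exponent $2$ the usual inverse-cocycle bookkeeping is harmless. The only points I would ask you to make explicit are the ones you already flag: that the pencil of quadrics through the anticanonical image is canonically attached to $X$ over $k$ (it is $\mathbb{P}$ of the kernel of $\operatorname{Sym}^2 H^0(-K_X) \to H^0(-2K_X)$), so that the comparison of $S_{{}^aX}$ with $S_X$ is a statement about honest $k$-subschemes and not just $\bar{k}$-points; and, in (b), that a projective equivalence of $S_Y$ with $S_X$ in $\PGL_2(k)$ propagates through the Veronese embedding to an isomorphism of the two blow-ups. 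Both are routine, and with them your argument is complete.
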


\noindent It follows that the isomorphism classes of del Pezzo surfaces of degree 4 over $k$ are in natural bijection with pairs $(S,[a])$ where $S$ is a reduced degree five subscheme of $\mathbb{P}^1_k$ --- up to projective equivalence --- and $[a] \in H^1(\bar{k}/k,A)$. 

The $\Gamma$ action on $\overline{X}$ induces a $\Gamma$ action on $\Aut(\overline{X})$ defined by ${}^{\gamma} g = \gamma g \gamma^{-1}$ for $\gamma \in \Gamma$ and $g \in \Aut(\overline{X})$. If we anticanonically embed $\overline{X}$ in $\mathbb{P}^4_{\bar{k}}$ and identify $\Aut(\overline{X})$ with elements of $\PGL_5(\bar{k})$, then the induced action corresponds to an entrywise action of $\Gamma$ on the representing matrices. After twisting $X$ by a cocycle $(a_\gamma) \in Z^1(\bar{k}/k, A)$, the induced $\Gamma$ action on $\Aut(\overline{{}^a X})$ is defined by ${}^{\gamma'}g = a_{\gamma} \gamma g \gamma^{-1} a_{\gamma}^{-1}$ for all $\gamma \in \Gamma$ and $g \in \Aut(\overline{{}^a X})$. 

\medskip

\noindent Let $G$ be a subgroup of $W(\mathsf{D}_5) \cong C_2^4 \rtimes S_5$ acting by automorphisms on some del Pezzo surface of degree 4 over $k$. We have an exact sequence
\begin{equation*}
    1 \to G' \to G \to G'' \to 1
\end{equation*}
where $G''$ is the image of the induced map from $G$ to $S_5$. 

\begin{lem}
\label{Lemma:ReducetoS3}
    Let $k$ be a field of characteristic zero. Let $G$ be a subgroup of $W(\mathsf{D}_5)$ acting by automorphisms on some del Pezzo surface of degree 4 over $k$. If $G''$ is not isomorphic to $S_3$, then $G$ acts by automorphisms on a quasi-split del Pezzo surface of degree 4 over $k$. 
\end{lem}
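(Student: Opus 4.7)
The plan is to proceed by case analysis on the abstract isomorphism type of $G''$. Since $G''$ is isomorphic to a subgroup of some $H \subseteq \PGL_2(\bar{k})$ stabilizing five points, the possibilities are $1$, $C_2$, $C_3$, $C_4$, $C_5$, $S_3$, $D_5$, and the hypothesis excludes $S_3$. In each of the remaining six cases I will exhibit an explicit quasi-split del Pezzo surface $X_0'$ over $k$ whose automorphism group contains a conjugate of $G$ in $W(\mathsf{D}_5)$.

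First I would extract the relevant constraints on $k$ and on $G'$ from the hypothesis. By Proposition \ref{Prop:Skorobogatov}, $X = {}^a X_0$ is a twist of its associated quasi-split surface $X_0$. Writing an element $g = \alpha h \in G \subseteq A \rtimes H$ and unpacking the condition $a_\gamma \gamma(g) a_\gamma^{-1} = g$ shows that $\gamma(h) = h$, so $G'' \subseteq H^\Gamma$; moreover for $g \in G'$ we have $h = 1$, so $\gamma(\alpha) = \alpha$ and thus $G' \subseteq A^\Gamma$. Lemma \ref{Lemma:Pre-quasi} then forces: $i \in k$ when $G''$ contains an order-$4$ element, $\sqrt{5} \in k$ when $G''$ contains an order-$5$ element, and (if $G''$ contains a $3$-cycle) either $\epsilon_3 \in k$ or $G' \subseteq \langle c_1, c_2, c_3 \rangle$.

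These constraints match exactly what is needed to invoke the constructions from Section 3. When $G''$ is trivial, any five distinct $k$-rational points of $\mathbb{P}^1_k$ in general position yield a split surface with $A^\Gamma = C_2^4 \supseteq G$. For $G'' \cong C_2$, $C_4$, or $D_5$ I would use Examples \ref{example:C2}, \ref{example:C4}, \ref{example:D5} respectively (the latter two guaranteed by $i \in k$ and $\sqrt{5} \in k$), each producing $X_0'$ with trivial Galois action on the five points and $\Aut(X_0') = C_2^4 \rtimes G''$; for $G'' \cong C_5$ the same Example \ref{example:D5} works after restricting to $C_5 \subseteq D_5$. For $G'' \cong C_3$, Example \ref{example:S3} handles both subcases: with trivial Galois action when $\epsilon_3 \in k$, giving $\Aut(X_0') = C_2^4 \rtimes S_3$; and with Galois swapping the two $\epsilon_3$-conjugate points when $\epsilon_3 \notin k$, giving $\Aut(X_0') = C_2^3 \rtimes S_3$, which still contains $G$ because $G' \subseteq \langle c_1,c_2,c_3\rangle$ by the constraint above. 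In each case, after a conjugation in $W(\mathsf{D}_5)$ so that $G''$ coincides with the permutation subgroup in the Example, $G$ embeds in $\Aut(X_0')$.

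The main subtle point is verifying that every $G''$ of a given isomorphism type is conjugate in $S_5$ to the specific subgroup appearing in the relevant Example. This follows because $G''$ is the permutation action on five points of a $\PGL_2$-stabilizer, which forces its cycle type in $S_5$: $C_2$ as a double transposition, $C_3$ as a $3$-cycle fixing two points, $C_4$ as a $4$-cycle fixing one point, and $C_5$ or $D_5$ as acting on all five points. All subgroups of each such type form a single $S_5$-conjugacy class, so the case analysis closes. The exceptional status of the $S_3$ case reflects the possibility of the non-split extensions $C_2^3 \cdot S_3$ and $C_2 \cdot S_3$ from the table, whose realization depends on a quaternion-algebra condition that has no quasi-split analogue.
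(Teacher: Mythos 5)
Your proposal follows essentially the same route as the paper: reduce to $G''\in\{1,C_2,C_3,C_4,C_5,D_5\}$, use Lemma \ref{Lemma:Pre-quasi} to extract the field conditions and the constraint $G'\subseteq A^\Gamma$, pin down the $S_5$-conjugacy class of $G''$ by its cycle type on the five points, and then quote Examples \ref{example:C2}--\ref{example:D5} via Theorem \ref{Theorem:Quasi-split}. All of that is fine, and your observation that the cycle types are forced by the $\PGL_2$-fixed-point count is a correct (and slightly more explicit) version of what the paper takes for granted.

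The one step you have under-justified is the case $G''\cong C_3$ with $\epsilon_3\notin k$. You claim $G\subseteq\langle c_1,c_2,c_3\rangle\rtimes\langle(123)\rangle$ ``because $G'\subseteq\langle c_1,c_2,c_3\rangle$,'' but that inclusion of $G'$ alone does not control the coset representatives: a priori $G$ could contain an element $a(123)$ with $a\in C_2^4\setminus\langle c_1,c_2,c_3\rangle$, and then $G$ would not sit inside $C_2^3\rtimes S_3$. The conclusion is nevertheless true, and this is exactly the content of the paper's Case 5: there the sequence $1\to G'\to G\to C_3\to 1$ is split by Schur--Zassenhaus, an element $a(123)$ has order $3$ iff $a\in\langle c_1c_2,c_2c_3\rangle$, and one enumerates the $(123)$-invariant possibilities for $G'$. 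Alternatively you can close the gap directly: $\langle c_1,c_2,c_3\rangle=\{a\in C_2^4\mid a_4=a_5\}$, and $(a(123))^3=(s,s,s,a_4,a_5)$ with $s=a_1+a_2+a_3$, which equals $c_4$ or $c_5$ whenever $a_4\neq a_5$; since $(a(123))^3\in G'\subseteq\langle c_1,c_2,c_3\rangle$, this forces $a_4=a_5$ and hence $a\in\langle c_1,c_2,c_3\rangle$. With that two-line computation (or the paper's Schur--Zassenhaus argument) inserted, your proof is complete.
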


\begin{proof}
Since $G$ acts by automorphisms on some quartic del Pezzo surface, $G''$ is a subgroup of $C_4$, $S_3$, or $D_5$. Moreover, if $G$ acts by automorphisms on a quartic del Pezzo surface $X$ with
\begin{equation*}
    1 \to A \to \Aut(\overline{X}) \to H \to 1
\end{equation*}
the resulting exact sequence of $\Gamma$-groups, then $G''$ is a subgroup of $H^{\Gamma}$. We break into cases:

\begin{itemize}
    \item[Case 1:] Suppose $G''$ is trivial. Then $G$ is a subgroup of $C_2^4$ in $W(\mathsf{D}_5)$, which acts by automorphisms on a quasi-split quartic del Pezzo surface over any $k$ by Theorem \ref{Theorem:Quasi-split}.

    \item[Case 2:] Suppose $G'' \cong C_2$. Then $G$ is a subgroup of $C_2^4 \rtimes \langle (12)(45) \rangle$ up to conjugacy, which acts by automorphisms on a quasi-split del Pezzo surface over any $k$ by Theorem \ref{Theorem:Quasi-split}.

    \item[Case 3:] Suppose $G'' \cong C_4$. Then $i \in k$ by Lemma \ref{Lemma:Pre-quasi}, and $G$ is a subgroup of $C_2^4 \rtimes \langle (1425) \rangle$ up to conjugacy. If $i \in k$, then $C_2^4 \rtimes \langle (1425) \rangle$ acts on a quasi-split quartic del Pezzo surface over $k$ by Theorem \ref{Theorem:Quasi-split}.

    \item[Case 4:] Suppose $G'' \cong C_5$ or $G'' \cong D_5$. Then $\sqrt{5} \in k$ by Lemma \ref{Lemma:Pre-quasi}, and $G$ is a subgroup of $C_2^4 \rtimes \langle (12)(45), (15342) \rangle$. If $\sqrt{5} \in k$, then $C_2^4 \rtimes \langle (12)(45), (15342) \rangle$ acts on a quasi-split quartic del Pezzo surface over $k$ by Theorem \ref{Theorem:Quasi-split}.

    \item[Case 5:] Suppose $G'' \cong C_3$. Then the exact sequence
    \begin{equation*}
        1 \to G' \to G \to G'' \to 1
    \end{equation*}
    splits by the Schur-Zassenhaus theorem. Up to conjugacy in $W(\mathsf{D}_5)$, we can assume $G'' = \langle (123) \rangle$, and $G$ is a subgroup of $C_2^4 \rtimes \langle (123) \rangle$. If $\epsilon_3 \in k$, then $C_2^4 \rtimes \langle (123) \rangle$ acts by automorphisms on a quasi-split quartic del Pezzo surface over $k$ by Theorem \ref{Theorem:Quasi-split}. Now suppose $\epsilon_3 \not\in k$. By Lemma \ref{Lemma:Pre-quasi}, $G'$ is a subgroup of $\langle c_1,c_2,c_3 \rangle$. The only non-trivial subgroups of $\langle c_1,c_2,c_3 \rangle$ invariant under the action of $\langle (123) \rangle$ are $\langle c_4c_5 \rangle$, $\langle c_1c_2,c_2c_3 \rangle$, and $\langle c_1,c_2,c_3 \rangle$. An element in $W(\mathsf{D}_5)$ of the form $a(123)$ with $a \in C_2^4$ has order 3 if and only if $a \in \langle c_1c_2,c_2c_3 \rangle$. Since the sequence splits, $G$ contains an element $a(123)$ of order 3. So if $G' = \langle c_1c_2,c_2c_3 \rangle$ or $G' = \langle c_1,c_2,c_3 \rangle$, then $G$ contains $(123)$. Therefore, $G$ is either $\langle c_1c_2,c_2c_3 \rangle \langle (123) \rangle$ or $\langle c_1,c_2,c_3 \rangle \langle (123) \rangle$. In either case, $G$ is a subgroup of $\langle c_1,c_2,c_3 \rangle \rtimes \langle (12)(45), (123) \rangle$ which acts on a quasi-split quartic del Pezzo surface over $k$ by Theorem \ref{Theorem:Quasi-split}. If $G' = \langle c_4c_5 \rangle$, then $G$ is one of the following groups:
    \begin{equation*}
        \langle c_4c_5(123) \rangle, \quad \langle c_1(123) \rangle, \quad \langle c_2(123) \rangle, \quad \langle c_3(123) \rangle,
    \end{equation*}
    but these groups are all conjugate in $W(\mathsf{D}_5)$. Consequently, we can assume that $G = \langle c_4c_5(123) \rangle$, which is contained in $\langle c_1,c_2,c_3 \rangle \rtimes \langle (12)(45), (123) \rangle$.
\end{itemize}

\noindent We conclude that any subgroup $G$ of $W(\mathsf{D}_5)$ that only acts on non-quasi-split surfaces must have $G'' \cong S_3$.
\end{proof}

\begin{thm}
    \label{Theorem:Non-quasi}
    Let $k$ be a field of characteristic zero. Suppose $G$ is a subgroup of $W(\mathsf{D}_5)$ that acts by automorphisms on a quartic del Pezzo surface over $k$ but does not act by automorphisms on any quasi-split quartic del Pezzo surface over $k$. Then, up to conjugacy, $G$ is one of the following groups:
    \begin{equation*}
        \langle c_1,c_2,c_3, (123), c_4(12)(45) \rangle = C_2^3 \cdot S_3 \quad \text{or} \quad \langle c_4c_5, (123), c_4(12)(45) \rangle = C_2 \cdot S_3
    \end{equation*}
\end{thm}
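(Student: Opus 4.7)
My plan is to reduce the question to a case analysis on the kernel $G' = G \cap A$. Lemma \ref{Lemma:ReducetoS3} restricts attention to $G'' \cong S_3$, so up to conjugacy in $W(\mathsf{D}_5)$ I would fix $G'' = \langle (12)(45), (123) \rangle$; then $G \subseteq C_2^4 \rtimes \langle (12)(45), (123) \rangle$ and $G'$ is an $S_3$-stable subgroup of $A$. If $\epsilon_3 \in k$, Theorem \ref{Theorem:Quasi-split} places $C_2^4 \rtimes S_3$ in $\mathcal{M}_k^{qs}$, so $G \in \mathcal{M}_k^{qs}$, contradicting the hypothesis. Hence I would assume $\epsilon_3 \notin k$, in which case Lemma \ref{Lemma:Pre-quasi}(iv) forces $G' \subseteq A^\Gamma \subseteq \langle c_1, c_2, c_3 \rangle$.

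If $G$ is itself contained in $\langle c_1, c_2, c_3 \rangle \rtimes \langle (12)(45), (123) \rangle = C_2^3 \rtimes S_3$, Theorem \ref{Theorem:Quasi-split} again gives $G \in \mathcal{M}_k^{qs}$. Otherwise $G$ contains some lift $b(12)(45)$ with $b \notin \langle c_1, c_2, c_3 \rangle$. Writing $b = c_4 + b'$ with $b' \in \langle c_1, c_2, c_3 \rangle$ and using that $(12)(45)$ fixes $c_3$ while swapping $c_1 \leftrightarrow c_2$ and $c_4 \leftrightarrow c_5$, I would compute
\[
(b\,(12)(45))^2 \;=\; b + (12)(45)(b) \;=\; c_4 c_5 + \bigl(b' + (12)(45)(b')\bigr) \;\in\; \{c_4 c_5,\, c_3\}.
\]
Since this element lies in $G'$ and $G'$ is $S_3$-stable while the $(123)$-orbit of $c_3$ is $\{c_1, c_2, c_3\}$, either value forces $\langle c_4 c_5 \rangle \subseteq G'$.

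The remaining task is to enumerate the $S_3$-stable subgroups of $\langle c_1, c_2, c_3 \rangle$ containing $c_4 c_5 = c_1 c_2 c_3$: exactly $\langle c_4 c_5 \rangle$ and $\langle c_1, c_2, c_3 \rangle$ (the candidate $\langle c_1 c_2, c_2 c_3 \rangle$ is ruled out because it consists only of weight-$0$ and weight-$2$ combinations of the $c_i$, so it cannot contain the weight-$3$ element $c_4 c_5$). For each of these $G'$, conjugation by $A$ shifts $b$ by $\{a + (12)(45)(a) : a \in A\} = \langle c_1 c_2, c_4 c_5 \rangle$, which permits normalizing $b$ to $c_4$; applying Schur--Zassenhaus to the $C_3$-quotient, I can arrange the order-three lift of $(123)$ to be $(123)$ itself. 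This yields $G = \langle G', (123), c_4(12)(45) \rangle$, giving $C_2 \cdot S_3$ when $G' = \langle c_4 c_5 \rangle$ and $C_2^3 \cdot S_3$ when $G' = \langle c_1, c_2, c_3 \rangle$. The main subtlety I anticipate is tracking $W(\mathsf{D}_5)$-conjugacy carefully enough to confirm that these two groups are the only ones up to conjugacy --- in particular, verifying that no intermediate $S_3$-stable subgroup of $\langle c_1, c_2, c_3 \rangle$ produces a new non-split extension compatible with the constraints.
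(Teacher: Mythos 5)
Your proposal is correct and follows the same overall architecture as the paper's proof: both reduce to $G'' \cong S_3$ via Lemma \ref{Lemma:ReducetoS3}, fix $G'' = \langle (12)(45),(123) \rangle$, dispose of the case $\epsilon_3 \in k$ via Theorem \ref{Theorem:Quasi-split}, and use Lemma \ref{Lemma:Pre-quasi} to force $G' \subseteq \langle c_1,c_2,c_3 \rangle$. Where you genuinely diverge is in pinning down $G'$. The paper lists the non-trivial $(123)$-stable subgroups of $\langle c_1,c_2,c_3 \rangle$ and excludes $\langle c_1c_2,c_2c_3 \rangle$ by a Galois-geometric argument (membership of $c_ic_j$ in $A^{\Gamma}$ forces $\{P_i,P_j\}$ to be $\Gamma$-stable, hence $P_1,P_2,P_3$ fixed). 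You instead compute $(b(12)(45))^2 \in \{c_4c_5, c_3\}$ for any lift with $b \notin \langle c_1,c_2,c_3 \rangle$ and deduce $c_4c_5 \in G'$ outright; this is purely group-theoretic, and it also cleanly covers the possibility that $G'$ is trivial, which the paper's enumeration of non-trivial invariant subgroups passes over silently. The one place you are too quick is the final normalization: conjugation by $A$ shifts $b$ only within a coset of $\im(1+(12)(45)) = \langle c_1c_2, c_4c_5 \rangle$, and $c_4 \cdot \langle c_1,c_2,c_3 \rangle$ is a union of two such cosets, so normalizing $b$ to $c_4$ and the order-three lift to $(123)$ simultaneously is not automatic. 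It does go through --- when $G' = \langle c_4c_5 \rangle$ the constraint $(b(12)(45))^2 \in G'$ puts $b$ in the reachable coset $c_4\langle c_1c_2,c_4c_5 \rangle$, and when $G' = \langle c_1,c_2,c_3 \rangle$ one gets $c_4(12)(45) \in G' \cdot b(12)(45) \subseteq G$ for free --- but that bookkeeping, together with checking that the chosen involution lift and order-three lift generate an honest group with the prescribed $G'$, should be written out; the paper is comparably terse at exactly the same spot.
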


\begin{proof}
We have an exact sequence
\begin{equation}
    1 \to G' \to G \to G'' \to 1
\end{equation}
where $G''$ is the image of $G$ in $S_5$. By Lemma \ref{Lemma:ReducetoS3}, $G'' \cong S_3$. Up to conjugacy in $W(\mathsf{D}_5)$ we can assume that $G'' = \langle (12)(45), (123) \rangle$ and $G$ is a subgroup of $C_2^4 \rtimes \langle (12)(45), (123) \rangle$. If $\epsilon_3 \in k$, then $C_2^4 \rtimes \langle (12)(45), (123) \rangle$ acts on a quasi-split quartic del Pezzo surface over $k$ by Theorem \ref{Theorem:Quasi-split}, so we assume that $\epsilon_3 \not\in k$. Then by Lemma \ref{Lemma:Pre-quasi}, $G'$ is a subgroup of $\langle c_1,c_2,c_3 \rangle$. We have an exact sequence
\begin{equation}
\label{Split}
    1 \to G' \to H \to \langle (123) \rangle \to 1
\end{equation}
where $H$ is the subgroup of $G$ with image in $\langle (123) \rangle$. This sequence splits by the Schur-Zassenhaus theorem. Since $G'$ is closed under the action of $\langle (123) \rangle$, $G'$ must be $\langle c_4c_5 \rangle$, $\langle c_1c_2,c_2c_3 \rangle$, or $\langle c_1,c_2,c_3 \rangle$. However, if $G'$ contains $\langle c_1c_2,c_2c_3 \rangle$, then $\{P_1,P_2\}$, $\{P_2,P_3\}$, and $\{P_1,P_3\}$ form $\Gamma$-stable subsets of the five $\bar{k}$-points of $S_X$, where $X$ is the quartic del Pezzo surface over $k$ on which $G$ acts. It follows that $P_1$, $P_2$, and $P_3$ are $\Gamma$-fixed, so $G' = \langle c_1,c_2,c_3 \rangle$. We conclude that either $G' = \langle c_4c_5 \rangle$ or $G' = \langle c_1,c_2,c_3 \rangle$. 
\begin{itemize}
    \item[Case 1:] Suppose $G' = \langle c_1,c_2,c_3 \rangle$. Since sequence \ref{Split} splits, $H$ contains an element $a(123)$ with $a \in C_2^4$ of order 3. Now $a(123)$ has order 3 if and only if $a \in \langle c_1c_2,c_2c_3 \rangle$. Since $\langle c_1c_2,c_2c_3 \rangle$ is a subgroup of $G'$, we conclude that $H$ contains $(123)$. Therefore, $H = \langle c_1,c_2,c_3, (123) \rangle$. Now $G$ also contains an element of the form $a(12)(45)$ for some $a \in C_2^4$. If $a \in \langle c_1,c_2,c_3 \rangle$, then $G = \langle c_1,c_2,c_3,(123),(12)(45) \rangle$. If $a \not\in \langle c_1,c_2,c_3 \rangle$, then $G = \langle c_1,c_2,c_3,(123),c_4(12)(45) \rangle$. If $G = \langle c_1,c_2,c_3,(123),(12)(45) \rangle$, then $G$ acts on a quasi-split surface by Theorem \ref{Theorem:Quasi-split}. Therefore, $G$ must be $\langle c_1,c_2,c_3, (123), c_4(12)(45) \rangle$ up to conjugacy.

    \item[Case 2:] Suppose $G' = \langle c_4c_5 \rangle$. We have seen in Lemma \ref{Lemma:ReducetoS3} that $H$ must be $\langle c_4c_5 (123) \rangle$ up to conjugacy in $W(\mathsf{D}_5)$. Now $G$ also contains an element of the form $a(12)(45)$ for some $a \in C_2^4$. If $a \in \langle c_4c_5 \rangle$, then $(12)(45) \in G$, and $G = \langle c_4c_5,(123),(12)(45) \rangle$. It is simple to check that the only other possibility for $G$ is $\langle c_4c_5, (123), c_4(12)(45) \rangle$. If $G = \langle c_4c_5,(123),(12)(45) \rangle$, then $G$ acts on a quasi-split quartic del Pezzo surface. Therefore, $G$ must be $\langle c_4c_5, (123), c_4(12)(45) \rangle$ up to conjugacy.
\end{itemize}
\end{proof}

\begin{cor}
\label{Corollary:Epsilon_3}
    Let $k$ be a field of characteristic zero with $\epsilon_3 \in k$. Then any subgroup of $W(\mathsf{D}_5)$ that acts by automorphisms on a quartic del Pezzo surface acts by automorphisms on a quasi-split del Pezzo surface.
\end{cor}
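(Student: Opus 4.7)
The plan is to combine Lemma \ref{Lemma:ReducetoS3} and Theorem \ref{Theorem:Quasi-split} in a direct way. Let $G \subseteq W(\mathsf{D}_5)$ act by automorphisms on some quartic del Pezzo surface $X$ over $k$, and let $G''$ denote the image of $G$ in $S_5$ under the projection $W(\mathsf{D}_5) \to S_5$. By Lemma \ref{Lemma:ReducetoS3}, if $G''$ is not isomorphic to $S_3$, then $G$ already acts by automorphisms on a quasi-split quartic del Pezzo surface, and there is nothing left to prove. So it suffices to handle the case $G'' \cong S_3$.

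When $G'' \cong S_3$, the proof of Lemma \ref{Lemma:ReducetoS3} shows that, up to conjugacy in $W(\mathsf{D}_5)$, we may take $G'' = \langle (12)(45), (123)\rangle$, so $G$ is contained in $C_2^4 \rtimes \langle (12)(45),(123)\rangle$, which is our chosen representative of the conjugacy class $C_2^4 \rtimes S_3$. Now by Theorem \ref{Theorem:Quasi-split}, the hypothesis $\epsilon_3 \in k$ guarantees that $C_2^4 \rtimes S_3 \in \mathcal{M}_k^{qs}$. Concretely, Example \ref{example:S3} exhibits a reduced degree five subscheme of $\mathbb{P}^1_k$ whose associated quasi-split surface $X_0$ satisfies $\Aut(X_0) = C_2^4 \rtimes \langle (12)(45), (123)\rangle$.

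Since any subgroup of a group that acts by automorphisms on a surface also acts by automorphisms on that same surface, $G$ acts by automorphisms on $X_0$. This completes the proof.

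The main point is that there is essentially no new obstacle here: Theorem \ref{Theorem:Non-quasi} already isolated the two exceptional groups $C_2^3 \cdot S_3$ and $C_2 \cdot S_3$ as the \emph{only} possible sources of non-quasi-split behavior, and the derivation of those two groups in the proof of Theorem \ref{Theorem:Non-quasi} crucially invoked Lemma \ref{Lemma:Pre-quasi}(iii) under the assumption $\epsilon_3 \notin k$ to force $G' \subseteq \langle c_1,c_2,c_3\rangle$. Once $\epsilon_3 \in k$, that reduction collapses and the full group $C_2^4 \rtimes S_3$ is available in $\mathcal{M}_k^{qs}$, which is what makes the corollary immediate.
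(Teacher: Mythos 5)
Your proof is correct and follows essentially the same route as the paper: the paper's one-line proof cites Theorem \ref{Theorem:Non-quasi} to note that the only potentially problematic groups lie in $C_2^4 \rtimes \langle (12)(45),(123)\rangle$, which is in $\mathcal{M}_k^{qs}$ when $\epsilon_3 \in k$, while you reach the identical containment one step earlier via Lemma \ref{Lemma:ReducetoS3}. The two arguments are logically equivalent, since Theorem \ref{Theorem:Non-quasi} itself rests on that lemma.
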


\begin{proof}
The groups in Theorem \ref{Theorem:Non-quasi} are contained in $C_2^4 \rtimes \langle (12)(45), (123) \rangle$, which acts by automorphisms on a quasi-split surface when $\epsilon_3 \in k$ by Theorem \ref{Theorem:Quasi-split}.
\end{proof}

\noindent Let $G$ be a subgroup of $W(\mathsf{D}_5)$ that only acts by automorphisms on non-quasi-split del Pezzo surfaces of degree 4 over $k$. By Proposition \ref{Prop:Skorobogatov}, $G$ must act on a twist of some quasi-split surface $X_0$ over $k$. Consider the exact sequence of $\Gamma$-groups
\begin{equation}
    1 \to A \to \Aut(\overline{X_0}) \to H \to 1
\end{equation}
where $H$ is identified with the subgroup of $\PGL_2(\bar{k})$ stabilizing the five points of $S_{X_0}$. By Lemma \ref{Lemma:Moving} this sequence splits, and $\Aut(X_0) \cong A^{\Gamma} \rtimes H^{\Gamma}$. If we twist $X_0$ be a cocycle $(a_\gamma) \in Z^1(\bar{k}/k,A)$, then the $\Gamma$-action on $A$ and $H$ is unchanged. We obtain an exact sequence of $\Gamma$-groups
\begin{equation}
    1 \to A \to \Aut(\overline{{}^a X_0}) \to H \to 1
\end{equation}
that does not necessarily split.

\begin{lem}
\label{Lemma:Kernel}
    Let $A$ be a $\Gamma$-group and suppose $a \in Z^1(\Gamma, A)$. Then $\ker(a) = a^{-1}(\id)$ is a subgroup of $\Gamma$. Moreover, if $\ker(a)$ acts trivially on $\text{im}(a) \subseteq A$, then $\ker(a)$ is a normal subgroup. If $A$ is finite, then $\ker(a)$ is open and closed.
\end{lem}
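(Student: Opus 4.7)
\medskip

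The plan is to work directly from the cocycle identity $a_{\gamma\tau} = a_\gamma \cdot {}^{\gamma}a_\tau$, which (evaluated at $\gamma = \tau = \mathrm{id}$) gives $a_{\mathrm{id}} = \mathrm{id}$, and hence ${}^{\gamma}a_{\gamma^{-1}} = a_\gamma^{-1}$ for every $\gamma$.

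For the first assertion, I would check closure under multiplication and inversion. If $\gamma,\tau \in \ker(a)$, the cocycle identity gives $a_{\gamma\tau} = a_\gamma \cdot {}^{\gamma}a_\tau = \mathrm{id} \cdot {}^{\gamma}\mathrm{id} = \mathrm{id}$, so $\gamma\tau \in \ker(a)$. If $\gamma \in \ker(a)$, then from $\mathrm{id} = a_\gamma \cdot {}^{\gamma}a_{\gamma^{-1}}$ we get ${}^{\gamma}a_{\gamma^{-1}} = \mathrm{id}$, and applying $\gamma^{-1}$ on both sides gives $a_{\gamma^{-1}} = \mathrm{id}$.

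For the normality claim, fix $\gamma \in \ker(a)$ and $\tau \in \Gamma$, and expand $a_{\tau\gamma\tau^{-1}}$ twice with the cocycle identity:
\begin{equation*}
    a_{\tau\gamma\tau^{-1}} = a_\tau \cdot {}^{\tau}\bigl(a_\gamma \cdot {}^{\gamma}a_{\tau^{-1}}\bigr) = a_\tau \cdot {}^{\tau\gamma}a_{\tau^{-1}},
\end{equation*}
using $a_\gamma = \mathrm{id}$. Now $a_{\tau^{-1}} \in \operatorname{im}(a)$, so by hypothesis ${}^{\gamma}a_{\tau^{-1}} = a_{\tau^{-1}}$, hence ${}^{\tau\gamma}a_{\tau^{-1}} = {}^{\tau}a_{\tau^{-1}} = a_\tau^{-1}$ by the identity noted above. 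Therefore $a_{\tau\gamma\tau^{-1}} = a_\tau \cdot a_\tau^{-1} = \mathrm{id}$, so $\tau\gamma\tau^{-1} \in \ker(a)$, as required.

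For the last statement, I would invoke that cocycles in Galois cohomology are continuous with respect to the profinite topology on $\Gamma$; when $A$ is finite we give it the discrete topology, so $\{\mathrm{id}\}$ is open and $\ker(a) = a^{-1}(\mathrm{id})$ is open in $\Gamma$. Since open subgroups of a profinite group automatically have finite index and closed complement (a finite union of cosets of an open subgroup), $\ker(a)$ is also closed. The main (minor) subtlety is the second part: one has to be careful to use the hypothesis on the correct element of $\operatorname{im}(a)$, namely $a_{\tau^{-1}}$, rather than on $a_\tau^{-1}$, which is not a priori in the image.
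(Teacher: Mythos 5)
Your proof is correct and takes essentially the same route as the paper: both verify the subgroup axioms from the cocycle identity, establish normality by expanding $a_{\tau\gamma\tau^{-1}}$ and using the hypothesis to let the kernel element act trivially on $a_{\tau^{-1}}\in\operatorname{im}(a)$, and deduce openness/closedness from continuity of $a$ into the discrete finite group $A$. Your closing remark about applying the hypothesis to $a_{\tau^{-1}}$ rather than $a_\tau^{-1}$ is exactly the point the paper's computation also relies on.
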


\begin{proof}
Recall that a cocycle $a \in Z^1(\Gamma,A)$ is a function $a: \Gamma \to A$ satisfying the condition $a(\gamma_1 \gamma_2) = a(\gamma_1){}^{\gamma_1} a(\gamma_2)$. It is simple to prove, using the cocycle condition, that $\ker(a)$ is a subgroup of $\Gamma$. Now suppose $\ker(a)$ acts trivially on $\text{im}(a)$. For $\gamma \in \Gamma$ and $\sigma \in \ker(a)$, 
\begin{equation*}
    a(\gamma \sigma \gamma^{-1}) = a(\gamma \sigma){}^{\gamma \sigma} a(\gamma^{-1}) = a(\gamma) {}^\gamma a(\sigma) {}^\gamma a(\gamma^{-1}) = a(\gamma) {}^\gamma a(\gamma^{-1}) = a(\gamma\gamma^{-1}) = \id
\end{equation*}
So $\gamma\sigma\gamma^{-1} \in \ker(a)$, and $\ker(a)$ is a normal subgroup. Finally, suppose $A$ is finite and endowed with the discrete topology. Then $a$ is continuous, so $\ker(a) = a^{-1}(\id)$ must be an open and closed set.
\end{proof}

\noindent The following proposition offers a precise condition on $k$ for the groups in Theorem \ref{Theorem:Non-quasi} to act on a quartic del Pezzo surface over $k$. We also describe the surfaces on which the groups can act.

\begin{prop}
    \label{Prop:TwistCondition}
    Let $G$ be $\langle c_1,c_2,c_3,(123),c_4(12)(45) \rangle$ or $\langle c_4c_5,(123),c_4(12)(45) \rangle$.
    \begin{itemize}
        \item[(i)] $G$ acts on a quartic del Pezzo surface over $k$ if and only if $x^2 + y^2 = -3$ has a solution over $k$.

        \item[(ii)] Suppose $\epsilon_3 \not\in k$. Let $X$ be a quartic del Pezzo surface with an action of $G$. Then $X$ is a twist of a quasi-split surface $X_0$ by a cocycle $(a_\gamma) \in Z^1(\bar{k}/k,A)$, and $\Aut(X_0) = \langle c_1,c_2,c_3 \rangle \rtimes \langle (12)(45), (123) \rangle$ or $\Aut(X_0) = \langle c_4c_5 \rangle \rtimes \langle (12)(45), (123) \rangle$.  In either case, $(a_\gamma)$ surjects onto $\langle c_4,c_5 \rangle$ and satisfies the property
        \begin{align*}
            a_\gamma \in \begin{cases}
            \{\id, c_4c_5 \}, &\text{iff } \gamma(\epsilon_3) = \epsilon_3 \\
            \{c_4, c_5\}, &\text{iff } \gamma(\epsilon_3) = \epsilon_3^2
            \end{cases}.
        \end{align*}
    \end{itemize}
\end{prop}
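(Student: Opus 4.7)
The plan is to translate ``$G$ acts on $X$ over $k$'' into an explicit constraint on a 1-cocycle representing the twist class of $X$, and then to express the existence of such a cocycle as a cohomological obstruction that evaluates to the class of the quaternion algebra $(-1,-3)_k$.

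For (ii), I begin with the cocycle presentation. By Proposition \ref{Prop:Skorobogatov}, $X = {}^a X_0$ for some $(a_\gamma) \in Z^1(\Gamma, A)$ and a unique quasi-split $X_0$. Since $G'' \cong S_3$ must inject into $H^\Gamma$ and the possible $H$'s listed in Section 2.3 are $\{1, C_2, C_4, S_3, D_5\}$, the group $H$ equals $S_3$, so $\Aut(\overline{X_0}) = C_2^4 \rtimes S_3$. Using $\epsilon_3 \notin k$, Lemma \ref{Lemma:Pre-quasi}(iv) forces $A^\Gamma \subseteq \langle c_1, c_2, c_3\rangle$; combined with the inclusion $G \cap A \subseteq A^{\Gamma'} = A^\Gamma$ and the two shapes of $G \cap A$ (either $\langle c_1, c_2, c_3\rangle$ or $\langle c_4 c_5\rangle$), this narrows $A^\Gamma$ to the two listed possibilities and yields the two claimed forms of $\Aut(X_0) = A^\Gamma \rtimes H^\Gamma$.

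Next I impose twisted-invariance ${}^{\gamma'}g = a_\gamma \cdot {}^\gamma g \cdot a_\gamma^{-1}$ on each generator of $G$. Elements of $G \cap A$ are automatic. The element $(123)$, represented by $h_3$ with rational entries, is $\Gamma$-fixed, so its invariance collapses to $a_\gamma$ commuting with $(123)$, i.e.\ $a_\gamma \in \mathrm{Fix}_A((123)) = \langle c_4, c_5\rangle$. For $c_4(12)(45)$, a direct computation using that $A$ is abelian and $(12)(45)$ swaps $c_4, c_5$ yields
\begin{equation*}
\bigl({}^{(12)(45)} a_\gamma\bigr)\, a_\gamma^{-1} \;=\; \begin{cases} c_4 c_5, & \gamma(\epsilon_3) = \epsilon_3^2,\\ \mathrm{id}, & \gamma(\epsilon_3) = \epsilon_3,\end{cases}
\end{equation*}
which, together with $a_\gamma \in \langle c_4, c_5\rangle$, is exactly the stated dichotomy. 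For surjectivity, pick $\gamma$ with $\gamma(\epsilon_3) = \epsilon_3^2$; then $a_\gamma \in \{c_4, c_5\}$ and $a_{\gamma^2} = a_\gamma \cdot {}^\gamma a_\gamma = c_4 c_5$, while a short case analysis rules out having $a_{\gamma'} = \mathrm{id}$ on every fixing $\gamma'$ (such a configuration fails the cocycle identity on pairs of swapping elements), producing both $c_4$ and $c_5$ in the image.

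For (i), the conditions above say precisely that $[a] \in H^1(k, \langle c_4, c_5\rangle)$ lifts, under the augmentation $\langle c_4, c_5\rangle \twoheadrightarrow C_2$, the character $\chi \in H^1(k, C_2)$ of $k(\epsilon_3)/k$, i.e.\ the Kummer class $[-3] \in k^*/(k^*)^2$. Viewing $\langle c_4, c_5\rangle$ as the induced module $\mathbb{F}_2[\Gamma/\Gamma_{k(\epsilon_3)}]$ and using the long exact sequence attached to $0 \to \langle c_4 c_5\rangle \to \langle c_4, c_5\rangle \to C_2 \to 0$, a lift exists iff the connecting map satisfies $\delta(\chi) = 0$ in $H^2(k, \mathbb{F}_2) = \mathrm{Br}(k)[2]$. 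A direct cocycle computation identifies $\delta(\chi)$ with $\chi \cup \chi = (-3,-3)_k = (-1,-3)_k$, whose splitting is exactly the solvability of $x^2 + y^2 = -3$ over $k$. For the converse, given such a solution I would exhibit an explicit lift of $\chi$ to a cocycle satisfying the dichotomy and verify that the resulting twist ${}^a X_0$ admits the required $G$-action.

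The most delicate step is the clean identification of $\delta(\chi)$ with $(-1,-3)_k$: because the $\Gamma$-module structure on $\langle c_4, c_5\rangle$ itself depends on $\chi$, one must unpack the connecting map cocycle-by-cocycle to see it equals $\chi \cup \chi$ (rather than a trivial or differently normalized variant), and then invoke the standard identity $(a,a) = (a,-1)$ in the Brauer group. A secondary subtlety is the construction in the ``if'' direction, together with the simultaneous verification that all generators of $G$ are $\Gamma'$-invariant for the chosen cocycle.
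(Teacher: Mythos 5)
Your part (ii) follows essentially the same path as the paper: reduce to $H \cong S_3$, force $a_\gamma$ into the centralizer $\langle c_4,c_5\rangle$ of $(123)$ in $A$, read the dichotomy off the twisted invariance of $c_4(12)(45)$, and get surjectivity from $a_{\gamma^2}=c_4c_5$ and $a_{\gamma^3}=c_5$. Part (i) is where you genuinely diverge. The paper shows $\ker(a)$ is an open normal subgroup cutting out a $C_4$-extension $F\supset k(\epsilon_3)\supset k$ (via Lemma \ref{Lemma:Kernel} and the bijection $\Phi$ with $\mathbb{Z}/4\mathbb{Z}$), and then quotes Theorem 2.2.5 of \cite{Jensen}: $k(\sqrt{-3})/k$ embeds in a $C_4$-extension iff $-3$ is a sum of two squares in $k$. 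You instead phrase the existence of $(a_\gamma)$ as lifting the quadratic character $\chi$ of $k(\epsilon_3)/k$ through $\langle c_4,c_5\rangle \twoheadrightarrow C_2$ and compute the connecting map directly; this is correct, since with the set-theoretic lift $s(1)=c_4$ one gets $\delta(\chi)(\gamma_1,\gamma_2)=\chi(\gamma_1)\chi(\gamma_2)$ precisely because $\gamma_1$ swaps $c_4$ and $c_5$ exactly when $\chi(\gamma_1)=1$, so $\delta(\chi)=\chi\cup\chi=(-3,-3)_k=(-3,-1)_k$. In effect you reprove the embedding criterion the paper cites; what this buys is a self-contained argument and a transparent link to the quaternion algebra mentioned in the paper's introductory remark, at the cost of losing the explicit $C_4$-extension that the paper reuses to build the cocycle in the reverse direction. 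Two points to tighten: first, your ``only if'' direction of (i) routes through (ii) and so implicitly assumes $\epsilon_3\notin k$; the case $\epsilon_3\in k$ needs the one-line observation $(1+2\epsilon_3)^2+0^2=-3$. Second, for the ``if'' direction you must still choose a concrete quasi-split $X_0$ with $\Aut(X_0)=\langle c_1,c_2,c_3\rangle\rtimes\langle(12)(45),(123)\rangle$ (Example \ref{example:S3}) and verify that every generator of $G$ is fixed by the twisted $\Gamma$-action; this is your dichotomy computation from (ii) read backwards, but it should be written out rather than deferred.
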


\begin{proof}
Let $G$ be either of the groups mentioned in the proposition, and suppose $G$ acts on a quartic del Pezzo surface over $k$. If $\epsilon_3 \in k$, then $G$ acts on a quasi-split surface $X_0$ with $\Aut(X_0) = C_2^4 \rtimes \langle (12)(45), (123) \rangle$ by Theorem \ref{Theorem:Quasi-split}. Notice that $(1+2\epsilon_3)^2 + 0^2 = -3$ so that $x^2 + y^2 = -3$ has a solution over $k$. Now suppose $\epsilon_3 \not\in k$. We know $G$ sits in an exact sequence
\begin{equation*}
    1 \to G' \to G \to G'' \to 1
\end{equation*}
with $G' = \langle c_1,c_2,c_3 \rangle$ or $G' = \langle c_4c_5 \rangle$ and $G'' = \langle (12)(45), (123) \rangle$. Suppose $G$ acts on a twist of a quasi-split surface $X_0$ by a cocycle $(a_\gamma) \in Z^1(\bar{k}/k,A)$. Then $\Aut(X_0)$ sits in an exact sequence
\begin{equation*}
    1 \to A^{\Gamma} \to \Aut(X_0) \to \langle (12)(45), (123) \rangle \to 1
\end{equation*}
since twisting will not change the $\Gamma$-action on $A$ and $H$. Since $\epsilon_3 \not\in k$, $A^{\Gamma}$ is a subgroup of $\langle c_1,c_2,c_3 \rangle$ by Lemma \ref{Lemma:Pre-quasi}. We deduce that $A^{\Gamma}$ is $\langle c_1,c_2,c_3 \rangle$ or $\langle c_4c_5 \rangle$. Notice that the $\Gamma$-action on $\Aut({}^a X_0)$ must fix $(123)$. Then $a_\gamma$ must be in the centralizer of $(123)$ in $A$ for all $\gamma$. This forces $\im(a) \subseteq \langle c_4,c_5 \rangle$. Now $\langle (12)(45), (123) \rangle$ corresponds to a subgroup $H^{\Gamma}$ of $\PGL_2(k)$ stabilizing the five points of $S_{X_0}$. By Proposition \ref{Prop:Beauville}, we can assume the fixed points of $H^{\Gamma}$ up to projective equivalence are $(-\epsilon_3:1)$ and $(1:-\epsilon_3)$. Then $c_4$ and $c_5$ in $A$ are swapped by $\gamma \in \Gamma$ if and only if $\gamma(\epsilon_3) = \epsilon_3^2$ and fixed by $\gamma$ if and only if $\gamma(\epsilon_3) = \epsilon_3$. Now $G$ also contains $c_4(12)(45)$, which forces
\begin{align*}
    a_\gamma \in \begin{cases}
    \{\id, c_4c_5 \}, &\text{iff } \gamma(\epsilon_3) = \epsilon_3 \\
    \{c_4, c_5\}, &\text{iff } \gamma(\epsilon_3) = \epsilon_3^2
    \end{cases}.
\end{align*}
Suppose $\gamma(\epsilon_3) = \epsilon_3^2$ and $a_\gamma = c_4$, then $a_{\gamma^2} = c_4c_5$ and $a_{\gamma^3} = c_5$. Appealing to similar calculations if $a_\gamma = c_5$, we conclude that $a$ surjects onto $\langle c_4,c_5 \rangle$. This proves part (ii) of the proposition. Now notice that $\ker(a)$ acts trivially on $A$, so $\ker(a)$ is an open and closed normal subgroup of $\Gamma$ by Lemma \ref{Lemma:Kernel}. So $\ker(a) = \Gal(\bar{k} / F)$ for some finite field extension $F / k$. Since $\epsilon_3 \not\in k$, let $\psi: \Gamma \to \Gal(k(\epsilon_3) / k)$ be the restriction map. Then $\ker(\psi) = \Gal(\bar{k} / k(\epsilon_3))$. Notice that $\ker(a) \subset \ker(\psi)$, so $\bar{k} \supset F \supset k(\epsilon_3) \supset k$ is a chain of field extensions. We define a homomorphism $\Phi: \Gal(\bar{k}/k) \to \mathbb{Z}/4\mathbb{Z}$ by
\begin{equation*}
    \Phi(\gamma) = \begin{cases}
    0, \quad &\text{if } a_{\gamma} = \id \\
    1, \quad &\text{if } a_{\gamma} = c_4 \\
    2, \quad &\text{if } a_{\gamma} = c_4c_5 \\
    3, \quad &\text{if } a_{\gamma} = c_5
    \end{cases}
\end{equation*}
It is clear that $\ker(\Phi) = \ker(a)$, so 
\begin{equation*}
    \Gal(F/k) \cong \frac{\Gal(\bar{k}/k)}{\ker(a)} \cong \mathbb{Z}/4\mathbb{Z}.
\end{equation*}
Then $F/k$ is a $C_4$-extension containing $k(\sqrt{-3})$. By Theorem 2.2.5 of \cite{Jensen}, the field $k(\sqrt{-3})/k$ can be embedded in a $C_4$-extension of $k$ if and only if $x^2 + y^2 = -3$ has a solution over $k$.

\medskip

For the reverse direction, suppose $x^2+y^2=-3$ has a solution over $k$. If $\epsilon_3 \in k$, then $C_2^4 \rtimes \langle (12)(45), (123) \rangle$ acts on a quasi-split quartic del Pezzo surface over $k$ by Theorem \ref{Theorem:Quasi-split}, and we are done. Now suppose $\epsilon_3 \not\in k$. Let $F \supset k(\epsilon_3) \supset k$ be field extensions with $\Gal(F/k) \cong C_4$, which exist by Theorem 2.2.5 of \cite{Jensen}. Let $\Gal(F/k) = \langle \sigma \rangle$. We define a function $\pi: \Gal(F/k) \to \langle c_4, c_5 \rangle$ to be
\begin{equation*}
    \pi(\sigma^i) = \begin{cases}
    \id, \quad &\text{if } i = 0 \\
    c_4, \quad &\text{if } i = 1 \\
    c_4c_5, \quad &\text{if } i = 2 \\
    c_5, \quad &\text{if } i=3
    \end{cases}
\end{equation*}
Then we define the cocycle $(a_\gamma) \in Z^1(\bar{k}/k, A)$ to be the composition of the maps
\begin{equation*}
    \Gamma \to \Gal(F/k) \xrightarrow{\pi} \langle c_4, c_5 \rangle
\end{equation*}
where the first map is restriction. Notice that $\sigma(\epsilon_3) = \epsilon_3^2$ and $\sigma^2(\epsilon_3) = \epsilon_3$. Let $X_0$ be a quasi-split surface with $\Aut(X_0) = \langle c_1,c_2,c_3 \rangle \rtimes \langle (12)(45), (123) \rangle$. Then the twisted surface ${}^a X_0$ has $\Aut({}^a X_0) = \langle c_1,c_2,c_3,(123),c_4(12)(45) \rangle$.
\end{proof}

\begin{cor}
\label{Corollary:Quasi}
Let $k$ be a field of characteristic zero. If $x^2+y^2 = -3$ does not have a solution over $k$, then every subgroup $G$ of $W(\mathsf{D}_5)$ acting by automorphisms on a quartic del Pezzo surface over $k$ acts by automorphisms on a quasi-split quartic del Pezzo surface over $k$.
\end{cor}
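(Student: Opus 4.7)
The plan is to prove this corollary by direct appeal to Theorem \ref{Theorem:Non-quasi} and Proposition \ref{Prop:TwistCondition}, since together they completely characterize the obstruction.

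First I would invoke Theorem \ref{Theorem:Non-quasi}: any subgroup $G \subseteq W(\mathsf{D}_5)$ that acts by automorphisms on a quartic del Pezzo surface over $k$ but fails to act on any quasi-split quartic del Pezzo surface over $k$ is conjugate to one of exactly two groups, namely $C_2^3 \cdot S_3 = \langle c_1,c_2,c_3,(123),c_4(12)(45)\rangle$ or $C_2 \cdot S_3 = \langle c_4c_5,(123),c_4(12)(45)\rangle$. So the corollary reduces to showing that under the hypothesis neither of these two groups acts by automorphisms on any quartic del Pezzo surface over $k$ at all.

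Next I would apply part (i) of Proposition \ref{Prop:TwistCondition}, which says precisely that each of these two groups acts by automorphisms on a quartic del Pezzo surface over $k$ if and only if $x^2+y^2=-3$ has a solution over $k$. Contrapositively, since we are assuming no such solution exists, neither $C_2^3 \cdot S_3$ nor $C_2 \cdot S_3$ appears in $\mathcal{M}_k$. Combining with the previous paragraph, every $G \in \mathcal{M}_k$ must already act on some quasi-split quartic del Pezzo surface over $k$, which is the desired conclusion.

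There is no genuine obstacle here: the corollary is essentially a rephrasing of Theorem \ref{Theorem:Non-quasi} together with the necessity half of Proposition \ref{Prop:TwistCondition}(i). The only thing worth being explicit about is that Theorem \ref{Theorem:Non-quasi} has been stated up to conjugacy in $W(\mathsf{D}_5)$, and $\mathcal{M}_k$ is defined up to conjugacy as well, so passing between a representative and its conjugacy class is harmless. The proof should therefore be a short paragraph simply citing these two results.
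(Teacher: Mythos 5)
Your proposal is correct and matches the paper's own proof essentially verbatim: the paper likewise cites Theorem \ref{Theorem:Non-quasi} to reduce to the two exceptional groups and then Proposition \ref{Prop:TwistCondition} to rule them out when $x^2+y^2=-3$ has no solution over $k$. Nothing further is needed.
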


\begin{proof}
By Theorem \ref{Theorem:Non-quasi}, we know which groups in $W(\mathsf{D}_5)$ can potentially act on only non-quasi-split surfaces. By Proposition \ref{Prop:TwistCondition}, these groups act on a quartic del Pezzo surface if and only if $x^2+y^2=-3$ has a solution over $k$.
\end{proof}

\noindent We are prepared to prove the main theorem. For any field $k$ of characteristic zero, recall that $\mathcal{M}_k$ denotes the collection of subgroups, up to conjugacy, in $W(\mathsf{D}_5)$ that act by automorphisms on a quartic del Pezzo surface over $k$. The following theorem determines the groups in $\mathcal{M}_k$ for any $k$.

\begin{thm}
    \label{Theorem:Main}
    Let $k$ be a field of characteristic zero. If $G$ is a maximal group in $\mathcal{M}_k$, then $G$ is one of the groups in the table. Each group appears in $\mathcal{M}_k$ if and only if $k$ satisfies the condition in the second column.
    \begin{center}
    \begin{tabular}{|c|c|}
        \hline
        \textbf{Group} & \textbf{Condition on } $k$ \\
        \hline
        $C_2^4 \rtimes C_4$ & $i \in k$ \\
        \hline
        $C_2^4 \rtimes S_3$ & $\epsilon_3 \in k$ \\
        \hline
        $C_2^4 \rtimes D_5$ & $\sqrt{5} \in k$ \\
        \hline
        $C_2^4 \rtimes C_2$ & none \\
        \hline
        $C_2^3 \rtimes S_3$ & none \\
        \hline
        $C_2^3 \cdot S_3$ & $x^2 + y^2 = -3$ has a $k$-point \\
        \hline
    \end{tabular}
    \end{center}
\end{thm}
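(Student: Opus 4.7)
The plan is to derive Theorem \ref{Theorem:Main} by combining Theorem \ref{Theorem:Quasi-split}, Theorem \ref{Theorem:Non-quasi}, and Proposition \ref{Prop:TwistCondition}. Observe that $\mathcal{M}_k$ decomposes as $\mathcal{M}_k^{qs}$ together with those conjugacy classes in $W(\mathsf{D}_5)$ that act by automorphisms on some quartic del Pezzo surface over $k$ but on no quasi-split one. The first five rows of the table simply reproduce the classification of Theorem \ref{Theorem:Quasi-split} for $\mathcal{M}_k^{qs}$, so all the new content is concentrated in the last row. What must be shown is that $C_2^3 \cdot S_3$ is maximal in $\mathcal{M}_k$ precisely when $x^2+y^2=-3$ has a $k$-solution, and that the previously listed maximal classes survive as maximal elements in $\mathcal{M}_k$.

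First I would invoke Theorem \ref{Theorem:Non-quasi}: any conjugacy class in $\mathcal{M}_k \setminus \mathcal{M}_k^{qs}$ must equal $C_2^3 \cdot S_3$ or $C_2 \cdot S_3$. Because $C_2 \cdot S_3$ is a subgroup of $C_2^3 \cdot S_3$, only the larger group is a candidate for a new maximal element. Proposition \ref{Prop:TwistCondition} then supplies the arithmetic dichotomy: both classes belong to $\mathcal{M}_k$ exactly when $x^2+y^2=-3$ is solvable over $k$, which matches the condition in the last row.

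Next I would verify maximality of the six listed groups for each field $k$ in turn, using the eight-case split from the proof of Theorem \ref{Theorem:Quasi-split} and grafting on the extra condition governed by $x^2+y^2=-3$. When $\epsilon_3 \in k$, Corollary \ref{Corollary:Epsilon_3} gives $\mathcal{M}_k = \mathcal{M}_k^{qs}$; the group $C_2^3 \cdot S_3$ is then present but sits properly inside $C_2^4 \rtimes S_3$, so it is not maximal, consistent with the downward-closure convention of the table. When $\epsilon_3 \not\in k$ and $x^2+y^2=-3$ is solvable, I would check that $C_2^3 \cdot S_3$ is not contained in any of the other maximal groups from Theorem \ref{Theorem:Quasi-split}: its order $48$ does not divide $|C_2^4 \rtimes C_4| = 64$ or $|C_2^4 \rtimes D_5| = 160$; it is strictly larger than $C_2^4 \rtimes C_2$ of order $32$; and it differs from $C_2^3 \rtimes S_3$. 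Symmetrically, none of the upper five groups is contained in $C_2^3 \cdot S_3$ by the same order and linear-part considerations, so each retains its maximality.

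The main obstacle is the comparison between $C_2^3 \cdot S_3$ and $C_2^3 \rtimes S_3$, which share the order $48$ and project onto the same $S_3$ quotient yet are inequivalent subgroups of $W(\mathsf{D}_5)$. To rule out containment in either direction, I would inspect their intersections with the normal subgroup $C_2^4$: the linear part of $C_2^3 \rtimes S_3$ is exactly $\langle c_1, c_2, c_3 \rangle$, while $C_2^3 \cdot S_3$ contains $c_4(12)(45)$, and equality of the two would force $c_4 \in \langle c_1, c_2, c_3 \rangle$, contradicting linear independence in $C_2^4$. Once this separation is pinned down, the remaining case analysis is a routine enumeration over the conditions $i, \epsilon_3, \sqrt{5} \in k$ parallel to the proof of Theorem \ref{Theorem:Quasi-split}.
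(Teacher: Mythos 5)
Your proposal is correct and follows essentially the same route as the paper: the first five rows come from Theorem \ref{Theorem:Quasi-split}, the last row from Theorem \ref{Theorem:Non-quasi} together with Proposition \ref{Prop:TwistCondition}, and Corollaries \ref{Corollary:Epsilon_3} and \ref{Corollary:Quasi} handle the cases where $\mathcal{M}_k = \mathcal{M}_k^{qs}$. The only cosmetic difference is your order-and-linear-part check of maximality; the paper gets this more directly from the fact that $C_2^3 \cdot S_3$ lies in $\mathcal{M}_k \setminus \mathcal{M}_k^{qs}$ while every other group of $\mathcal{M}_k$ lies in the downward-closed set $\mathcal{M}_k^{qs}$, so no containment is possible.
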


\begin{proof}
The fact that $C_2^3 \cdot S_3$ is in $\mathcal{M}_k$ if and only if $x^2 + y^2 = -3$ has a $k$-point is the content of Proposition \ref{Prop:TwistCondition}. The conditions for the first five groups are verified in Theorem \ref{Theorem:Quasi-split}. By Corollary \ref{Corollary:Epsilon_3}, if $\epsilon_3 \in k$, then $\mathcal{M}_k = \mathcal{M}_k^{qs}$. By Corollary \ref{Corollary:Quasi}, if $x^2 + y^2 = -3$ does not have a solution over $k$, then $\mathcal{M}_k = \mathcal{M}_k^{qs}$. Finally, suppose $x^2 + y^2 = -3$ has a solution over $k$ and $\epsilon_3 \not\in k$. Every group in $\mathcal{M}_k$ is also in $\mathcal{M}_k^{qs}$ except for $\langle c_1,c_2,c_3,(123),c_4(12)(45) \rangle$ and $\langle c_4c_5,(123),c_4(12)(45) \rangle$ by Theorem \ref{Theorem:Non-quasi}. Both groups are contained in $\mathcal{M}_k$ by Proposition \ref{Prop:TwistCondition}, so $\langle c_1,c_2,c_3,(123),c_4(12)(45) \rangle = C_2^3 \cdot S_3$ is maximal in $\mathcal{M}_k$.
\end{proof}

\begin{cor}
    \label{cor:quadratic}
    Let $k$ be a field of characteristic zero. Let $G \subseteq W(\mathsf{D}_5)$ act by automorphisms on a quartic del Pezzo surface over $\bar{k}$. There exists a quadratic extension $F/k$ and a quasi-split quartic del Pezzo surface $X_0$ over $F$ such that $G$ acts by automorphisms on $X_0$.
\end{cor}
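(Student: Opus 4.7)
The strategy is to reduce directly to Theorem \ref{Theorem:Quasi-split}. The starting observation is that if $G \subseteq W(\mathsf{D}_5)$ acts by automorphisms on a quartic del Pezzo surface over $\bar{k}$, then the image of $\Aut(\overline{X})$ in $W(\mathsf{D}_5)$ is, by the classification recalled in Section 8.6.4 of \cite{D12}, one of $C_2^4$, $C_2^4 \rtimes C_2$, $C_2^4 \rtimes C_4$, $C_2^4 \rtimes S_3$, or $C_2^4 \rtimes D_5$. Each of the first two is contained, up to conjugacy in $W(\mathsf{D}_5)$, in one of the three maximal groups $M \in \{C_2^4 \rtimes C_4,\, C_2^4 \rtimes S_3,\, C_2^4 \rtimes D_5\}$, so $G$ itself is contained in some such $M$ up to conjugacy.

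Next I would apply Theorem \ref{Theorem:Quasi-split}, which states that $C_2^4 \rtimes C_4$ lies in $\mathcal{M}_F^{qs}$ iff $i \in F$; that $C_2^4 \rtimes S_3$ lies in $\mathcal{M}_F^{qs}$ iff $\epsilon_3 \in F$; and that $C_2^4 \rtimes D_5$ lies in $\mathcal{M}_F^{qs}$ iff $\sqrt{5} \in F$. The crucial point is that each of these three conditions is witnessed by adjoining a single square root: $F = k(i)$, $F = k(\sqrt{-3})$ (which contains $\epsilon_3$), or $F = k(\sqrt{5})$, each of degree at most $2$ over $k$.

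The proof then concludes by picking a maximal $M$ containing $G$ and setting $F$ to be the corresponding extension. Theorem \ref{Theorem:Quasi-split} furnishes a quasi-split surface $X_0$ over $F$ realizing $M$ inside $W(\mathsf{D}_5)$, so in particular $G$ acts by automorphisms on $X_0$. I do not anticipate any serious obstacle; the only minor subtlety is the edge case in which the required element already lies in $k$, so that $F = k$ is not strictly a quadratic extension. This is easily sidestepped by replacing $F$ with any quadratic extension of $k$ (which exists unless $k = \bar{k}$, in which case the statement is trivial), since both quasi-splitness and the action of $G$ are preserved under base change.
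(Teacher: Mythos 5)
Your proposal is correct and is precisely the argument the paper leaves implicit (the corollary is stated without proof): every $G$ acting over $\bar{k}$ is contained, up to conjugacy, in one of the three geometrically maximal groups $C_2^4 \rtimes C_4$, $C_2^4 \rtimes S_3$, $C_2^4 \rtimes D_5$, and Theorem \ref{Theorem:Quasi-split} realizes each on a quasi-split surface as soon as $i$, $\epsilon_3$, or $\sqrt{5}$ is adjoined, each an extension of degree at most $2$. The only slip is the parenthetical claim that a quadratic extension of $k$ exists unless $k = \bar{k}$ (this fails, e.g., for fields whose absolute Galois group is pro-$p$ for odd $p$), but in that case every element of $k$ is a square, so the needed root already lies in $k$ and the conclusion holds with $F = k$ under the intended reading of ``quadratic extension'' as degree at most two.
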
  

\section{Equations} 

There are six maximal conjugacy classes of $W(\mathsf{D}_5)$ that appear in Theorem \ref{Theorem:Main}. For each maximal class $G$, we now exhibit explicit equations for a del Pezzo surface of degree 4 with an action of $G$ over the appropriate $k$. The machinery to find equations for these surfaces is contained in \cite{Flynn} and \cite{Skorobogatov}. Let $L$ be an \a'etale $k$-algebra so that $L = \bigoplus_{i=1}^m k_i$ for some finite separable field extensions $k_i/k$. The trace map $\Tr_{L/k}: L \to k$ is defined to be the sum of the trace maps $\Tr_{k_i/k} : k_i \to k$. Let $n = \dim_k L$. Notice that $k[x] / (P(x))$ is an $n$-dimensional \a'etale $k$-algebra when $P(x)$ is a separable polynomial of degree $n$. We define $\mu_n: \mathbf{alg_k} \to \mathbf{Group}$ to be the group $k$-scheme given by $R \mapsto \{r \in R \mid r^n = 1\}$. Assume that $n$ is odd. Consider the finite \a'etale abelian group $k$-scheme $A = R_{L/k}(\mu_2) / \mu_2$ where $R_{L/k}$ is the Weil restriction of scalars. Then $A(\bar{k}) \cong C_2^{n-1}$ is generated by $c_i$ for $1 \leq i \leq n$ with $\prod_{i=1}^n c_i = \id$. The $\Gamma$-action on $A(\bar{k})$ permutes the $c_i$ in the same way as the components of $L \otimes_k \bar{k}$. There is an exact sequence of $k$-groups
\begin{equation*}
    1 \to \mu_2 \to R_{L/k}(\mu_2) \to A \to 1.
\end{equation*}
As stated in \cite{Skorobogatov}, the induced map
\begin{equation*}
    H^2(k,\mu_2) \to H^2(k,R_{L/k}(\mu_2)) = H^2(L,\mu_2)
\end{equation*}
is injective. As a result
\begin{equation*}
    H^1(k,A) = L^*/k^*{L^*}^2 = \text{Coker}\left[\triangle : k^*/{k^*}^2 \to \prod_{i} k_i^* / {k_i^*}^2 \right].
\end{equation*}
In particular, every $\lambda \in L^*$ corresponds to a class $[\lambda] \in H^1(k,A)$.

\medskip

Recall that $S_X$ is the reduced degree five subscheme of $\mathbb{P}^1_k$ corresponding to any del Pezzo surface $X$ of degree 4. By choosing an appropriate affine subspace of $\mathbb{P}^1_k$, we can ensure that $S_X$ induces a separable monic polynomial $P(x) \in k[x]$. Then $L= k[x]/(P(x))$ is an \'etale algebra of dimension five over $k$. We let $\theta$ be the image of $x$ in $L$. Then $\mathbb{P}^4_k = \mathbb{P}(R_{L/k}\mathbb{A}^1_L)$, and we let $u = \sum_{i=0}^4 u_i\theta^i$ be a variable in $\mathbb{A}_L^1$. By Section 2 of \cite{Skorobogatov}, if we start with a reduced degree five subscheme of $\mathbb{P}^1_k$, the equations for the corresponding quasi-split surface are given by
\begin{equation}
\label{Eq:Quasi-split}
    \Tr_{L/k}(P'(\theta)^{-1}u^2) = \Tr_{L/k}(P'(\theta)^{-1}\theta u^2) = 0.
\end{equation}
Given an element $\lambda \in L^*$, the twist of the quasi-split surface by the class $[\lambda] \in H^1(k,A)$ is given by the equations
\begin{equation}
\label{Eq:DP4}
    \Tr_{L/k}(\lambda P'(\theta)^{-1} u^2) = \Tr_{L/k}(\lambda P'(\theta)^{-1} \theta u^2) = 0.
\end{equation}
If $G$ is $C_2^4 \rtimes C_2$, $C_2^4 \rtimes C_4$, or $C_2^4 \rtimes S_3$, equations for a quartic del Pezzo surface over $\mathbb{C}$ with an action of $G$ can be found in Section 8.6.4 of \cite{D12}. We can simply adopt these equations for the cases contained in the following example.

\begin{example}[$C_2^4 \rtimes C_2$, $C_2^4 \rtimes C_4$, $C_2^4 \rtimes S_3$]
    For arbitrary $k$, let $X$ be a surface given by
    \begin{equation*}
        \sum_{i=0}^4 u_i^2 = u_0^2 + au_1^2 - u_2^2 - au_3^2 = 0
    \end{equation*}
    with $a \neq 0, \pm 1$. For general $a \in k$, $\Aut(X) = C_2^4 \rtimes C_2$. If $i \in k$, let $X$ be the surface given by
    \begin{equation*}
        \sum_{i=0}^4 u_i^2 = u_0^2 + iu_1^2 - u_2^2 - iu_3^2 = 0.
    \end{equation*}
    Then $\Aut(X) = C_2^4 \rtimes C_4$ where $C_4$ is generated by the automorphism
    \begin{equation*}
        g_1: (u_0:...:u_4) \mapsto (u_1:u_2:u_3:u_0:u_4).
    \end{equation*}
    If $\epsilon_3 \in k$, let $X$ be the surface given by
    \begin{equation*}
        u_0^2 + \epsilon_3 u_1^2 + \epsilon_3^2 u_2^2 + u_3^2 = u_0^2 + \epsilon_3^2 u_1^2 + \epsilon_3 u_2^2 + u_4^2 = 0.
    \end{equation*}
    Then $\Aut(X) = C_2^4 \rtimes S_3$ where $S_3$ is generated by the automorphisms
    \begin{align*}
        &g_1: (u_0:...:u_4) \mapsto (u_0:u_2:u_1:u_4:u_3) \\
        &g_2: (u_0:...:u_4) \mapsto (u_1:u_2:u_0:\epsilon_3 u_3: \epsilon_3^2 u_4).
    \end{align*}
\end{example}
\begin{flushright}
    $\square$
\end{flushright}

\noindent To find equations for surfaces with an action of $C_2^4 \rtimes D_5$, $C_2^3 \rtimes S_3$, and $C_2^3 \cdot S_3$ over the appropriate field, we use equations \ref{Eq:Quasi-split} and \ref{Eq:DP4}.

\begin{example}[$C_2^4 \rtimes D_5$]
    Suppose $\sqrt{5} \in k$. Let $\phi = (1+\sqrt{5})/2$. We start with the five points
    \begin{equation*}
        \{(-1:\phi+1), (\phi+1:-1), (1:1), (2:\phi-1), (\phi-1:2) \}
    \end{equation*}
    in $\mathbb{P}^1_k$. Using equation \ref{Eq:Quasi-split} and rescaling, the corresponding quasi-split surface is given by the quadrics
    \begin{align*}
        q_1 &= -(\sqrt{5} + 1)u_0^2 + (5\sqrt{5} - 11)u_1^2 - (\sqrt{5}+1)u_2^2 + (5\sqrt{5} - 11)u_3^2 - (8\sqrt{5}-24)u_4^2 \\
        q_2 &= (\sqrt{5}-1)u_0^2 - (2\sqrt{5}-4)u_1^2 - (\sqrt{5}+1)u_2^2 - (6\sqrt{5}-14)u_3^2 + (8a-16)u_4^2.
    \end{align*}
    Then $\Aut(X) = C_2^4 \rtimes D_5$ where $D_5$ is generated by the automorphisms
    \begin{align*}
        g_1: (u_0:...:u_4) &\mapsto \left(2u_4:(2\phi + 1)u_0: (\phi-1)u_3: \phi u_1: \frac{1}{2}(\phi+1)u_2 \right) \\
        g_2 : (u_0:...:u_4) &\mapsto \left((2-\phi)u_1:(\phi+1)u_0:u_2:2\phi u_4: \frac{1}{2}(\phi-1)u_3 \right).
    \end{align*}
    One can check that
    \begin{equation*}
        h_5(q_1) = -\phi(q_1+q_2); \quad 
        h_5(q_2) = \phi q_1 - (\phi+1) q_2; \quad
        h_2(q_1) = q_2; \quad 
        h_2(q_2) = q_1.
    \end{equation*}
\end{example}
\begin{flushright}
    $\square$
\end{flushright}

\begin{example}[$C_2^3 \rtimes S_3$]
    Suppose $\epsilon_3 \not\in k$. We start with the five points
    \begin{equation*}
        \{(2:1),(1:2),(1:-1),(-\epsilon_3:1),(1:-\epsilon_3) \}
    \end{equation*}
    in $\mathbb{P}^1_{\bar{k}}$. Using equation \ref{Eq:Quasi-split} and rescaling, the corresponding quasi-split surface is given by the quadrics
    \begin{align*}
        q_1 &= u_0^2 - 8u_1^2 + u_2^2 - 3u_3^2 + 6u_3u_4 + 6u_4^2 \\
        q_2 &= 2u_0^2 - 4u_1^2 - u_2^2 - 6u_3^2 - 6u_3u_4 + 3u_4^2.
    \end{align*}
    Since $\epsilon_3 \not\in k$, $\Aut(X) = C_2^3 \rtimes S_3$ where $S_3$ is generated by
    \begin{align*}
        g_1 : (u_0:...:u_4) &\mapsto \left( u_2: \frac{1}{2}u_0: 2u_1: u_4: -u_3-u_4 \right) \\
        g_2: (u_0:...:u_4) &\mapsto \left( 2u_1:\frac{1}{2}u_0:u_2:u_4:u_3 \right).
    \end{align*}
    and $C_2^3$ consists of the diagonal automorphisms sending $u_i \mapsto \pm u_i$ for $i \in \{0,1,2\}$. One can check that
    \begin{equation*}
        h_3(q_1) = -q_2; \quad
        h_3(q_2) = q_1 - q_2; \quad 
        h_2(q_1) = -q_2; \quad
        h_2(q_2) = -q_1.
    \end{equation*}
\end{example}
\begin{flushright}
    $\square$
\end{flushright}

\begin{example}[$C_2^3 \cdot S_3$]
\label{Ex:C23S3}
    Assume that $\epsilon_3 \not\in k$ and suppose $\alpha^2 + \beta^2 = -3$ for some $\alpha,\beta \in k$. We adopt the five $\bar{k}$-points and quasi-split surface $X$ from the previous example. Then
    \begin{equation*}
        L = \dfrac{k[x]}{(P(x))} \cong \frac{k[x]}{(x-2)} \oplus \frac{k[x]}{(x- 1/2)} \oplus \frac{k[x]}{(x+1)} \oplus \frac{k[x]}{(x^2-x+1)}.
    \end{equation*}
    Twisting $X$ by $\lambda = (1,1,1,bx + c)$ for $b,c \in k$ produces the surface with equations
    \begin{align*}
        q_1 &= u_0^2 - 8u_1^2 + u_2^2 - 3(c+2b)u_3^2 + 6(c-b)u_3u_4 + 3(b+2c)u_4^2 \\
        q_2 &= 2u_0^2 - 4u_1^2 - u_2^2 - 3(b+2c)u_3^2 - 6(2b+c)u_3u_4 + 3(c-b)u_4^2.
    \end{align*}
    If we let
    \begin{equation*}
        \lambda = \left(1,1,1, \left( -\frac{1}{3} + \frac{1}{9}\alpha \right)x - \frac{2}{9}\alpha \right)
    \end{equation*}
    then ${}^\lambda X$ is given by
    \begin{align*}
        q_1 &= u_0^2 - 8u_1^2 + u_2^2 + 2 u_3^2 + 2(1-\alpha)u_3u_4 - (\alpha+1)u_4^2 \\
        q_2 &= 2u_0^2 - 4u_1^2 - u_2^2 + (\alpha+1)u_3^2 + 4 u_3u_4 + (1-\alpha)u_4^2.
    \end{align*}
    Then $\Aut({}^\lambda X) = C_2^3 \cdot S_3$ is generated by the diagonal automorphisms sending $u_i \mapsto \pm u_i$ for $i \in \{0,1,2\}$ as well as
    \begin{align*}
        &g_1: (u_0:...:u_4) \mapsto \left(u_2: \frac{1}{2}u_0: 2u_1: u_4: -u_3 - u_4 \right) \\
        &g_2: (u_0:...:u_4) \mapsto \left( 2u_1: \frac{1}{2}u_0 : u_2: \frac{-\alpha-1}{\beta}u_3 - \frac{2}{\beta}u_4: \frac{\alpha -1}{\beta}u_3 + \frac{\alpha +1}{\beta}u_4 \right).
    \end{align*}
\end{example}
\begin{flushright}
    $\square$
\end{flushright}

\section{Rationality}

We can also determine, for all $k$ of characteristic zero, which subgroups of $W(\mathsf{D}_5)$ act by automorphisms on a $k$-rational quartic del Pezzo surface. We say that a surface $X$ is \textit{$k$-minimal} if every birational $k$-morphism $f: X \to X'$, where $X'$ is a regular surface, is an isomorphism. By Lemma 0.4 of \cite{M66}, $X$ is $k$-minimal if and only if every $\Gamma$-orbit of exceptional curves on $X$ contains at least one couple of intersecting lines. By the main results of \cite{I72}, a $k$-minimal quartic del Pezzo surface is not $k$-rational.

\begin{prop}
    \label{prop:rationalityqs}
    Let $X_0$ be a quasi-split del Pezzo surface of degree 4 over $k$. Then $X_0$ is $k$-rational.
\end{prop}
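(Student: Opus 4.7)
The plan is to invoke directly the structural lemma of Skorobogatov recalled in Section 3, which says that any quasi-split quartic del Pezzo surface $X_0$ over $k$ is isomorphic (over $k$) to the blow-up of $\mathbb{P}^2_k$ in the image of $S_{X_0}$ under the Veronese embedding $\nu : \mathbb{P}^1_k \hookrightarrow \mathbb{P}^2_k$. Once this identification is in hand, the result is essentially immediate.

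More precisely, I would argue as follows. The subscheme $\nu(S_{X_0}) \subset \mathbb{P}^2_k$ is a $k$-subscheme (in particular its five geometric points $P_1,\dots,P_5$ form a $\Gamma$-stable set in $\mathbb{P}^2_{\bar k}$), so the blow-up $\pi : X_0 \to \mathbb{P}^2_k$ of $\mathbb{P}^2_k$ along this closed subscheme is a morphism of $k$-schemes. Because $\nu(S_{X_0})$ is a finite reduced closed subscheme of $\mathbb{P}^2_k$, the morphism $\pi$ is birational. Consequently the inverse $\pi^{-1} : \mathbb{P}^2_k \dashrightarrow X_0$, or equivalently $\pi : X_0 \dashrightarrow \mathbb{P}^2_k$, is a birational map defined over $k$. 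By definition this exhibits $X_0$ as a $k$-rational surface.

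There is no real obstacle here: the content of the proposition lies entirely in Skorobogatov's description of quasi-split surfaces, which is already quoted in the text as the lemma preceding \ref{Lemma:Moving}. Once that description is accepted, $k$-rationality is a formal consequence of the fact that the center of the blow-up is defined over $k$ (even though the individual points $P_1,\dots,P_5$ need not be). I would therefore keep the proof to a single short paragraph, making clear that the birational map to $\mathbb{P}^2_k$ is simply the blow-down morphism, and emphasizing that it is defined over $k$ precisely because $\nu(S_{X_0})$ is Galois-stable as a subscheme of $\mathbb{P}^2_k$.
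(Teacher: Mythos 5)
Your proof is correct, but it follows a genuinely different route from the one in the paper. You contract all five exceptional curves at once: since the center of the blow-up, $\nu(S_{X_0})$, is a closed subscheme of $\mathbb{P}^2_k$ defined over $k$, the blow-down $\pi\colon X_0\to\mathbb{P}^2_k$ is a birational morphism of $k$-varieties, and $k$-rationality follows immediately. The paper instead contracts the sixteenth line --- the strict transform $C$ of the conic through the five points, which is defined over $k$ because the point set is Galois-stable --- to obtain a del Pezzo surface of degree $5$ over $k$, and then invokes Swinnerton-Dyer's theorem that every quintic del Pezzo surface has a $k$-point together with Manin's Theorem 29.4 to conclude that this degree-$5$ surface, and hence $X_0$, is $k$-rational. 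Both arguments rest on the same input, namely Skorobogatov's identification of $X_0$ with the blow-up of $\mathbb{P}^2_k$ along $\nu(S_{X_0})$ (the unlabeled lemma preceding Lemma \ref{Lemma:Moving}). Your version is the more elementary of the two: it requires nothing beyond the fact that a blow-up along a nowhere-dense closed $k$-subscheme is birational over $k$, whereas the paper's version imports the nontrivial arithmetic fact that quintic del Pezzo surfaces always have rational points. The one thing the paper's detour buys is that it only uses the existence of a single Galois-invariant line (the conic $C$), which is closer to the raw definition of quasi-split; but since both proofs already quote Skorobogatov's structure result, that advantage is not exploited, and your direct blow-down is a clean and complete argument.
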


\begin{proof}
By a result of Swinnerton-Dyer \cite{SD70}, every del Pezzo surface of degree five over $k$ has a $k$-rational point. Combined with Theorem 29.4 of \cite{M86}, it follows that every del Pezzo surface of degree five is $k$-rational. $X_0$ is isomorphic to the blowup of $\mathbb{P}^2_k$ in a $\Gamma$-stable subset of five points in general position. The strict transform $C$ of the conic through the five points is defined over $k$. Blowing down along $C$, we obtain a del Pezzo surface of degree five, which must be $k$-rational.
\end{proof}

\begin{prop}
    \label{Prop:RationalityTwist}
    Suppose $\epsilon_3 \not\in k$, and suppose $X$ is a quartic del Pezzo surface over $k$ with an action of $C_2^3 \cdot S_3$ or $C_2 \cdot S_3$. Then $X$ is $k$-minimal, and therefore not $k$-rational.
\end{prop}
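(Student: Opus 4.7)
\medskip

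\noindent\textbf{Proof plan.} By the main results of \cite{I72} recalled above, any $k$-minimal quartic del Pezzo surface is not $k$-rational, so it suffices to prove that $X$ is $k$-minimal. By Manin's criterion (Lemma 0.4 of \cite{M66}), this reduces to showing that every $\Gamma$-orbit of exceptional curves on $\bar X$ contains a pair of intersecting divisors.

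Since $\epsilon_3 \notin k$, Proposition \ref{Prop:TwistCondition}(ii) realizes $X$ as a twist ${}^a X_0$ of a quasi-split surface $X_0$ by a cocycle $(a_\gamma) \in Z^1(\bar k/k, A)$ surjecting onto $\langle c_4, c_5 \rangle$, with the five points $\{P_1,\ldots,P_5\}$ of $S_{X_0}$ arranged so that $\{P_4,P_5\} = \{(-\epsilon_3\!:\!1),(1\!:\!-\epsilon_3)\}$. Using Lemma \ref{lemma:Aaction}, the 16 exceptional divisors of $\bar X$ can be identified with $A \cong C_2^4$ via $a \leftrightarrow a \cdot C$; under this labeling, the twisted Galois action is $\gamma \cdot a = a_\gamma \cdot {}^{\gamma}a$, where ${}^{\gamma}a$ denotes the original Galois action on $X_0$ (which at worst swaps $c_4 \leftrightarrow c_5$ when $\gamma(\epsilon_3) = \epsilon_3^2$, and in the $C_2 \cdot S_3$ case also permutes $\{c_1,c_2,c_3\}$).

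Let $\sigma \in \Gamma$ restrict to a generator of the $C_4$-extension $\Gal(F/k)$ given by Proposition \ref{Prop:TwistCondition}; after relabeling we may assume $a_\sigma = c_4$, hence $a_{\sigma^2} = c_4 c_5$ and $a_{\sigma^3} = c_5$. A direct computation, using the relation $c_1c_2c_3c_4c_5 = \id$ in $A$ and the fact that ${}^{\sigma}$ swaps $c_4 \leftrightarrow c_5$ while fixing $c_1,c_2,c_3$, shows that $\langle \sigma \rangle$ partitions the 16 divisors into four orbits of size 4, namely
\[
\{C,\,E_4,\,L_{45},\,E_5\} \quad \text{and} \quad \{E_i,\,L_{i4},\,L_{\{i,4,5\}^c},\,L_{i5}\} \ \text{for } i=1,2,3.
\]
Each of these orbits contains an intersecting pair, since $C \cdot E_4 = 1$ and $E_i \cdot L_{i4} = 1$. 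In the case $\Aut(X_0) = \langle c_1,c_2,c_3\rangle \rtimes \langle (12)(45),(123)\rangle$ (yielding $\Aut(X) = C_2^3 \cdot S_3$), these four sets are the full $\Gamma$-orbits. In the case $\Aut(X_0) = \langle c_4c_5\rangle \rtimes \langle (12)(45),(123)\rangle$ (yielding $\Aut(X) = C_2 \cdot S_3$), Galois additionally permutes $\{c_1,c_2,c_3\}$ transitively, fusing the three orbits indexed by $i$ into a single orbit of size $12$; this fused orbit still contains intersecting pairs (e.g.\ $E_1 \cdot L_{12} = 1$). In both cases every $\Gamma$-orbit contains an intersecting pair, so $X$ is $k$-minimal and hence not $k$-rational.

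The main technical burden is the bookkeeping of the twisted Galois action, but this is largely forced: because $\im(a) = \langle c_4,c_5\rangle$ acts freely on the $A$-torsor of exceptional divisors, every $\langle \sigma \rangle$-orbit automatically has size $4$, and the concrete orbits then fall out from the identity $c_4 c_5 = c_1 c_2 c_3$. The only real subtlety is verifying that, in the $C_2 \cdot S_3$ case, the extra Galois permutation of $\{c_1,c_2,c_3\}$ commutes with the cocycle (which it does, since $a$ lands in the centralizer $\langle c_4,c_5\rangle$ of $\{c_1,c_2,c_3\}$ in $A$), so it only fuses existing $\langle \sigma \rangle$-orbits rather than creating new singletons.
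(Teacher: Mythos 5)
Your argument is correct and follows essentially the same route as the paper's: both reduce to the twist description of Proposition \ref{Prop:TwistCondition}, compute the twisted Galois orbits on the 16 lines (arriving at the same four sets $\{E_4,E_5,L_{45},C\}$ and $\{E_i,L_{i4},L_{i5},L_{\{i,4,5\}^c}\}$, the paper via the $\langle c_4,c_5\rangle$-orbits and you via the orbits of a generator $\sigma$ of the $C_4$-quotient), exhibit an intersecting pair in each, and conclude $k$-minimality by Manin's criterion and non-rationality by Iskovskikh. The only difference is cosmetic bookkeeping, including your explicit treatment of the possible orbit fusion in the $C_2\cdot S_3$ case, which the paper absorbs into the phrase ``a union of one or more of the following sets.''
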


\begin{proof}
By Proposition \ref{Prop:TwistCondition}, since $\epsilon_3 \not\in k$, the surface $X$ is a twist of a quasi-split surface $X_0$ with $\Aut(X_0) = C_2^3 \rtimes S_3$ or $\Aut(X_0) = C_2 \rtimes S_3$. Let $X$ be isomorphic to ${}^a X_0$ for a cocycle $(a_\gamma) \in Z^1(\bar{k}/k,A)$. Then $(a_\gamma)$ surjects onto $\langle c_4,c_5 \rangle$ and satisfies the property
\begin{align*}
    a_\gamma \in \begin{cases}
    \{\id, c_4c_5 \}, &\text{iff } \gamma(\epsilon_3) = \epsilon_3 \\
    \{c_4, c_5\}, &\text{iff } \gamma(\epsilon_3) = \epsilon_3^2
    \end{cases}.
\end{align*}
Using Lemma \ref{lemma:Aaction}, we list the permutations of the exceptional divisors of $X_0$ induced by $c_4$, $c_5$, and $c_4c_5$:
\begin{align*}
    c_4 &: (E_1 \, L_{14}) (E_2 \, L_{24}) (E_3 \, L_{34}) (E_4 \, C) (E_5 \, L_{45}) (L_{12} \, L_{35}) (L_{13} \, L_{25}) (L_{23} \, L_{15}) \\
    c_5 &: (E_1 \, L_{15}) (E_2 \, L_{25}) (E_3 \, L_{35}) (E_4 \, L_{45}) (E_5 \, C) (L_{12} \, L_{34}) (L_{13} \, L_{24}) (L_{14} \, L_{23}) \\
    c_4c_5 &: (E_1 \, L_{23}) (E_2 \, L_{13}) (E_3 \, L_{12}) (E_4 \, E_5) (L_{14} \, L_{15}) (L_{24} \, L_{25}) (L_{34} \, L_{35}) (L_{45} \, C)
\end{align*}
The twisted $\Gamma$-action on the exceptional divisors of $X$ is defined by the rule ${}^{\gamma'}D = a_{\gamma} \cdot {}^\gamma D$. Then each $\Gamma$-orbit of exceptional divisors on $X$ is a union of one or more of the following sets:
\begin{align*}
    \{E_1, L_{23}, L_{14}, L_{15} \}, \quad \{E_2, L_{13}, L_{24}, L_{25} \}, \quad \{E_3, L_{12}, L_{34}, L_{35} \}, \quad \{E_4, E_5, L_{45}, C\}
\end{align*}
Notice that $L_{ij} \cdot L_{i'j'} = 1$ if and only if $i,j,i',j'$ are distinct, $E_i \cdot L_{jk} = 1$ if and only if $i=j$ or $i=k$, and $C \cdot E_i = 1$ for all $i$. Each set contains a pair of lines that intersect. We conclude that $X$ is $k$-minimal and therefore not $k$-rational.
\end{proof}

\noindent For any field $k$ of characteristic zero, we let $\mathcal{M}_k^{rat}$ denote the collection of subgroups, up to conjugacy, in $W(\mathsf{D}_5)$ that act by automorphisms on a $k$-rational quartic del Pezzo surface over $k$. We are able to describe $\mathcal{M}_k^{rat}$ for every $k$.

\begin{thm}
    \label{Theorem:Rationality}
    Let $k$ be a field of characteristic zero.
    \begin{itemize}
        \item[(i)] If $\epsilon_3 \in k$, then $\mathcal{M}_k = \mathcal{M}_k^{rat}$.

        \item[(ii)] If $x^2 + y^2 = -3$ does not have a solution over $k$, then $\mathcal{M}_k = \mathcal{M}_k^{rat}$.

        \item[(iii)] If $x^2+y^2 = -3$ has a solution over $k$ and $\epsilon_3 \not\in k$, then every group in $\mathcal{M}_k$ is in $\mathcal{M}_k^{rat}$ except the groups $C_2^3 \cdot S_3$ and $C_2 \cdot S_3$.
    \end{itemize}
\end{thm}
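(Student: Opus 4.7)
The strategy is to assemble the previously proven results: quasi-split surfaces are always $k$-rational by Proposition \ref{prop:rationalityqs}, so the only obstruction to $\mathcal{M}_k = \mathcal{M}_k^{rat}$ comes from groups that are forced to act only on non-quasi-split surfaces. Theorem \ref{Theorem:Non-quasi} identifies these exceptional groups, and Proposition \ref{Prop:RationalityTwist} shows they yield $k$-minimal (hence non-$k$-rational) surfaces. Everything else then falls into place.

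For part (i), assume $\epsilon_3 \in k$. By Corollary \ref{Corollary:Epsilon_3}, every $G \in \mathcal{M}_k$ acts by automorphisms on some quasi-split quartic del Pezzo surface $X_0$ over $k$. By Proposition \ref{prop:rationalityqs}, $X_0$ is $k$-rational, so $G \in \mathcal{M}_k^{rat}$. Hence $\mathcal{M}_k \subseteq \mathcal{M}_k^{rat}$, and the reverse inclusion is trivial. Part (ii) is entirely parallel, using Corollary \ref{Corollary:Quasi} in place of Corollary \ref{Corollary:Epsilon_3}: the hypothesis that $x^2 + y^2 = -3$ has no solution over $k$ guarantees that every group in $\mathcal{M}_k$ acts on a quasi-split surface, which is $k$-rational by Proposition \ref{prop:rationalityqs}.

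For part (iii), assume $x^2 + y^2 = -3$ has a solution over $k$ and $\epsilon_3 \not\in k$. Let $G \in \mathcal{M}_k$. Either $G$ acts on some quasi-split quartic del Pezzo surface over $k$, in which case $G \in \mathcal{M}_k^{rat}$ by Proposition \ref{prop:rationalityqs}, or else by Theorem \ref{Theorem:Non-quasi}, $G$ must be conjugate to $C_2^3 \cdot S_3$ or $C_2 \cdot S_3$. For the converse direction in the exceptional cases, suppose $G$ is conjugate to $C_2^3 \cdot S_3$ or $C_2 \cdot S_3$ and $G$ acts by automorphisms on a quartic del Pezzo surface $X$ over $k$. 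Since $\epsilon_3 \not\in k$, part (ii) of Proposition \ref{Prop:TwistCondition} shows $X$ is a twist of a quasi-split surface by a cocycle of the required form; then Proposition \ref{Prop:RationalityTwist} applies to show $X$ is $k$-minimal and hence not $k$-rational. Thus $C_2^3 \cdot S_3$ and $C_2 \cdot S_3$ are not in $\mathcal{M}_k^{rat}$.

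There is no substantive obstacle here because the hard work has already been done in the quasi-split classification, in the identification of the non-quasi-split exceptional groups, and in the $k$-minimality computation of Proposition \ref{Prop:RationalityTwist}; the present theorem is the clean packaging of those inputs. The only point to double-check is that in part (iii) every surface on which $C_2^3 \cdot S_3$ or $C_2 \cdot S_3$ acts is of the non-quasi-split twisted form analyzed in Proposition \ref{Prop:RationalityTwist} — this is exactly what Theorem \ref{Theorem:Non-quasi} together with Proposition \ref{Prop:TwistCondition}(ii) guarantees under the assumption $\epsilon_3 \not\in k$.
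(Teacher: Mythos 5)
Your proof is correct and follows essentially the same route as the paper: parts (i) and (ii) from Corollaries \ref{Corollary:Epsilon_3} and \ref{Corollary:Quasi} together with Proposition \ref{prop:rationalityqs}, and part (iii) from Theorem \ref{Theorem:Non-quasi} and Proposition \ref{Prop:RationalityTwist}. You simply spell out the details that the paper leaves as ``direct consequences.''
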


\begin{proof}
Parts (i) and (ii) are direct consequences of Corollary \ref{Corollary:Epsilon_3}, Corollary \ref{Corollary:Quasi}, and Proposition \ref{prop:rationalityqs}. Part (iii) is an easy consequence of Theorem \ref{Theorem:Non-quasi} and Proposition \ref{Prop:RationalityTwist}.
\end{proof}

\noindent One might also wonder if either $C_2^3 \cdot S_3$ or $C_2 \cdot S_3$ act on a stably $k$-rational quartic del Pezzo surface if $k$ does not contain $\epsilon_3$. Recall that a variety $X$ is stably $k$-rational if $X \times \mathbb{P}^m_k$ is $k$-rational for some $m$. For any quartic del Pezzo surface $X$ over $k$, the $\Gamma$-action on $X$ induces an action on $\Pic \overline{X}$ that preserves the intersection pairing and the canonical divisor. There is a corresponding map $\rho: \Gamma \to W(\mathsf{D}_5)$. We define the \textit{splitting group} of $X$ to be the image of $\Gamma$ in $W(\mathsf{D}_5)$ (see Section $0$ of \cite{KST89}). The following is Theorem 5.20 of \cite{KST89}:

\begin{thm}
    \label{Theorem:KST89}
    If a del Pezzo surface $X$ of degree 4 with $(\Pic \overline{X})^\Gamma \cong \mathbb{Z}$ is stably $k$-rational but not $k$-rational, then its splitting group $G_X$ is conjugate to one of the following groups:
    \begin{equation*}
        I_1 = \langle c_1c_5, c_1(23), (234) \rangle, \quad I_2 = \langle c_1(15)(23), (234) \rangle, \quad I_3 = \langle c_1(23), (15), (234) \rangle.
    \end{equation*}
\end{thm}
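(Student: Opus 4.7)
The plan is to translate ``stably $k$-rational but not $k$-rational'' into a lattice-theoretic classification problem about the Galois module $\Pic \overline{X}$, and then carry out a finite enumeration of subgroups of $W(\mathsf{D}_5)$. The underlying framework is the Voskresenski\u{\i}--Colliot-Th\'el\`ene--Sansuc theory: for a smooth projective geometrically rational $k$-surface $Y$, stable $k$-rationality is equivalent to $\Pic \overline{Y}$ being a stably permutation $\Gamma$-module. Moreover, the hypothesis $(\Pic \overline{X})^\Gamma \cong \mathbb{Z}$ is precisely the assertion that $X$ is $k$-minimal, and by Iskovskikh's theorem every $k$-minimal del Pezzo surface of degree four is non-$k$-rational. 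So the theorem reduces to: classify subgroups $G \subseteq W(\mathsf{D}_5)$ with $(\Pic \overline{X})^G \cong \mathbb{Z}$ for which $\Pic \overline{X}$ is a stably permutation $G$-lattice.

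The first step is to enumerate the candidate groups. Since $\Pic \overline{X} \otimes \mathbb{Q} \cong \mathbb{Q} K_X \oplus (K_X^\perp \otimes \mathbb{Q})$ with $W(\mathsf{D}_5)$ acting trivially on $\mathbb{Q} K_X$ and by the reflection representation on the $\mathsf{D}_5$ root lattice, the condition $(\Pic \overline{X})^G \cong \mathbb{Z}$ is equivalent to $G$ having no nonzero invariants on the root lattice. One runs through the conjugacy classes of subgroups of $W(\mathsf{D}_5)$ and retains only those with this property; this yields a finite (but nontrivial) list of candidates.

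The second step is the cohomological analysis. For each candidate $G$, compute the flabby class $[\Pic \overline{X}]^{fl}$ in the Endo--Miyata class group. By Endo--Miyata, $\Pic \overline{X}$ is stably permutation as a $G$-module exactly when this class vanishes. Operationally one constructs a flabby resolution
\begin{equation*}
0 \to \Pic \overline{X} \to P \to F \to 0
\end{equation*}
with $P$ permutation, and tests $F$ for stable permutation-ness by checking the vanishing of $H^1(G', F^\vee)$ for all subgroups $G' \subseteq G$ and exhibiting an explicit stable isomorphism to a permutation lattice. For the three conjugacy classes $I_1, I_2, I_3$, produce such an explicit stabilization, certifying stable $k$-rationality; for every other candidate, exhibit either a nontrivial $H^1$ or an explicit obstruction preventing stable permutation equivalence.

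The main obstacle is the combinatorial bookkeeping in this second step. The group $W(\mathsf{D}_5)$ has order $1920$ and a large number of conjugacy classes of subgroups meeting the invariance condition, and for each one must distinguish stably permutation from merely coflabby and from non-flabby. The delicate part is not the cohomological vanishing checks, which can be automated, but producing the positive constructions (explicit permutation stabilizations) for $I_1, I_2, I_3$ while proving impossibility for every other candidate. A careful exploitation of the symmetry of the five exceptional pairs and the decomposition $W(\mathsf{D}_5) \cong C_2^4 \rtimes S_5$ should cut down the casework by first fixing the image of $G$ in $S_5$ and then classifying the possible extensions inside $C_2^4 \rtimes S_5$.
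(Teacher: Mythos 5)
The paper does not prove this statement at all: it is imported verbatim as Theorem 5.20 of \cite{KST89}, so there is no internal proof to compare your attempt against. Your outline is, in spirit, a reconstruction of the strategy of that reference --- translate stable rationality into the condition that $\Pic \overline{X}$ be a stably permutation module over the splitting group, cut down the candidates by the rank-one invariance condition on the $\mathsf{D}_5$ root lattice, then do lattice-theoretic casework over subgroups of $W(\mathsf{D}_5)$ --- and that framing is correct. But as written it is a program, not a proof: the entire mathematical content (enumerating the conjugacy classes of subgroups with no invariants on the root lattice, and deciding, class by class, whether the Picard lattice is stably permutation) is deferred to ``combinatorial bookkeeping'' that is never carried out. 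For a result of this type, that casework \emph{is} the proof.

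Two technical points would also need repair in a completed version. First, ``stable $k$-rationality is equivalent to $\Pic \overline{Y}$ being stably permutation'' holds only as an implication in the direction you need (stably $k$-rational $\Rightarrow$ stably birational to $\mathbb{P}^2_k$ $\Rightarrow$ $\Pic \overline{Y}$ stably permutation, by the Colliot-Th\'el\`ene--Sansuc comparison of Picard modules of stably birational smooth projective varieties); the converse requires at least a rational point and is itself a theorem. Since the statement is only a necessary condition on $G_X$, the forward implication suffices, and the ``positive constructions'' for $I_1, I_2, I_3$ you propose are not needed for the statement as given. Second, vanishing of the flabby class in the Endo--Miyata monoid is equivalent to the lattice being \emph{quasi-permutation}, not \emph{stably permutation}; these differ in general, so the test you describe is a priori weaker than the property you must certify or refute. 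Nonvanishing of some $H^1(G', \Pic \overline{X})$ does correctly exclude stable permutation-ness via Shapiro's lemma, but for candidates where all such groups vanish one must still rule out a stable isomorphism with a permutation lattice directly, and those are exactly the hard cases. Finally, $(\Pic \overline{X})^{\Gamma} \cong \mathbb{Z}$ implies $k$-minimality but is not equivalent to it (minimal conic bundle structures have invariant rank $2$); only the implication you actually use is true.
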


\begin{prop}
    \label{Prop:StablyRational}
    Suppose $\epsilon_3 \not\in k$, and suppose $X$ is a quartic del Pezzo surface over $k$ with an action of $C_2^3 \cdot S_3$ or $C_2 \cdot S_3$. Then $X$ is not stably $k$-rational.
\end{prop}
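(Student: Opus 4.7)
The plan is to apply Theorem \ref{Theorem:KST89} contrapositively. By Proposition \ref{Prop:RationalityTwist}, the surface $X$ is already $k$-minimal and not $k$-rational, so to rule out stable $k$-rationality it suffices to show that the splitting group $G_X \subseteq W(\mathsf{D}_5)$ is not conjugate to any of $I_1$, $I_2$, $I_3$. The strategy is to show that $|G_X| = 4$, while each $I_j$ has order divisible by $3$; this will give the desired contradiction.

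To compute $G_X$, I would first observe that it must centralize $G$ in $W(\mathsf{D}_5)$: since the twisted Galois action on $\Aut(\overline{X})$ is given by conjugation by the elements of $G_X$, one has $\Aut(X) = C_{\Aut(\overline{X})}(G_X)$, and the inclusion $G \subseteq \Aut(X)$ forces $G_X \subseteq C_{W(\mathsf{D}_5)}(G)$. A direct calculation in the semidirect product $W(\mathsf{D}_5) = C_2^4 \rtimes S_5$, imposing commutation with each generator one at a time, yields
\[
C_{W(\mathsf{D}_5)}\bigl(C_2^3 \cdot S_3\bigr) \;=\; C_{W(\mathsf{D}_5)}\bigl(C_2 \cdot S_3\bigr) \;=\; \{\mathrm{id},\, c_4 c_5,\, c_4(45),\, c_5(45)\} \;=\; \langle c_4(45) \rangle \;\cong\; C_4.
\]
Proposition \ref{Prop:TwistCondition}(ii) pins down $G_X$ inside this centralizer: writing $X \cong {}^a X_0$ with $(a_\gamma)$ surjecting onto $\langle c_4, c_5 \rangle$ and noting that the quasi-split Galois image on $\Pic \overline{X_0}$ is $\rho_0(\Gamma) = \langle (45) \rangle$ (coming from the Galois orbit $\{P_4, P_5\} = \{(-\epsilon_3 : 1), (1 : -\epsilon_3)\}$), the twisted image $\rho(\gamma) = a_\gamma \rho_0(\gamma)$ sweeps out exactly the four elements above, so $G_X = \langle c_4(45) \rangle$. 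A Burnside trace calculation on the six-dimensional lattice $\Pic \overline{X}$ then gives traces $6, -2, 2, -2$ for $\mathrm{id}, c_4(45), c_4 c_5, c_5(45)$; the invariant rank is $(6-2+2-2)/4 = 1$, so $(\Pic \overline{X})^\Gamma \cong \mathbb{Z}$ and Theorem \ref{Theorem:KST89} applies.

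Since each of $I_1, I_2, I_3$ contains the 3-cycle $(234)$, each has order divisible by $3$, while $|G_X| = 4$; hence $G_X$ cannot be conjugate to any $I_j$. Theorem \ref{Theorem:KST89} then forces $X$ to be not stably $k$-rational. The main obstacle is the centralizer computation, which, though mechanical, requires careful bookkeeping of the $S_5$-action on $C_2^4$ and of the relation $c_1 c_2 c_3 c_4 c_5 = \mathrm{id}$ (used to reconcile the two natural generating sets for $C_2^3 \cdot S_3$ and $C_2 \cdot S_3$ and to see that the centralizers of both groups coincide).
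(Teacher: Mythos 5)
Your proposal is correct and follows essentially the same route as the paper: both identify the splitting group via the centralizer of $G$ in $W(\mathsf{D}_5)$ together with Proposition \ref{Prop:TwistCondition}, verify $(\Pic \overline{X})^\Gamma \cong \mathbb{Z}$, and then exclude the groups $I_1, I_2, I_3$ of Theorem \ref{Theorem:KST89}. The only cosmetic differences are that you compute the invariant rank by a trace average rather than by exhibiting the matrix of $c_4(45)$, and you exclude the $I_j$ by divisibility of their orders by $3$ rather than by non-containment in $\langle c_4(45)\rangle$.
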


\begin{proof}
$X$ is a twist of a quasi-split surface $X_0$ with $\Aut(X_0) = \langle c_1,c_2,c_3,(123),(12)(45) \rangle$ or $\Aut(X_0) = \langle c_4c_5,(123),(12)(45) \rangle$. In either case, let $\gamma$ be an element of $\Gamma$ that swaps $c_4$ and $c_5$ in $\Aut(\overline{X_0})$ and fixes $\langle c_1,c_2,c_3,(123),(12)(45) \rangle$. Then the splitting group $G_{X_0}$ contains $(45)$. Let $a \in Z^1(\bar{k}/k,A)$ be a cocycle with $X = {}^a X_0$. Without loss of generality, we can assume $a_\gamma = c_4$ by Proposition \ref{Prop:TwistCondition}. Then the splitting group $G_X$ contains $c_4(45)$. With respect to the basis $\{E_1,E_2,E_3,E_4,E_5,H\}$, $c_4(45)$ acts on $\Pic \overline{X}$ via the matrix
\begin{equation*}
    \begin{bmatrix} -1 & 0 & 0 & 0 & -1 & -1 \\ 0 & -1 & 0 & 0 & -1 & -1 \\ 0 & 0 & -1 & 0 & -1 & -1 \\ -1 & -1 & -1 & -1 & -1 & -2 \\ 0 & 0 & 0 & -1 & -1 & -1 \\ 1 & 1 & 1 & 1 & 2 & 3 \end{bmatrix}.
\end{equation*}
One checks that $(\Pic \overline{X})^{\langle c_4(45) \rangle} \cong \mathbb{Z}$. It follows that $(\Pic \overline{X})^\Gamma \cong \mathbb{Z}$. Now since $C_2 \cdot S_3$ is fixed by the $\Gamma$-action, $G_X$ must be in the centralizer of $C_2 \cdot S_3$ in $W(\mathsf{D}_5)$. We compute that the centralizer of $C_2 \cdot S_3$ is $\langle c_4(45) \rangle$. None of the $G_X$ in Theorem \ref{Theorem:KST89} are contained in $\langle c_4(45) \rangle$, so $X$ is not stably $k$-rational.
\end{proof}

\noindent The proof of Proposition \ref{Prop:StablyRational} points to an interesting observation.

\begin{prop}
    \label{Prop:AutsofStab}
    Let $X$ be a quartic del Pezzo surface over $k$ with $(\Pic \overline{X})^{\Gamma} \cong \mathbb{Z}$ that is stably $k$-rational but not $k$-rational. Then, up to conjugacy, $\Aut(X)$ is a subgroup of $\langle c_1,c_5 \rangle$ or $\langle c_5(15) \rangle$.
\end{prop}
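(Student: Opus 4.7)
The plan is to combine Theorem~\ref{Theorem:KST89} with the fact that $\Aut(X)$ must centralize the splitting group $G_X$ inside $W(\mathsf{D}_5)$. Under the embedding $\Aut(\overline{X}) \hookrightarrow W(\mathsf{D}_5)$ afforded by the action on $\Pic \overline{X}$, the $\Gamma$-action on $\Aut(\overline{X})$ is given by conjugation via elements of the splitting group $G_X$. Since $\Aut(X)$ is the $\Gamma$-fixed subgroup of $\Aut(\overline{X})$, we obtain the inclusion $\Aut(X) \subseteq C_{W(\mathsf{D}_5)}(G_X)$.

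Under the hypotheses of the proposition, Theorem~\ref{Theorem:KST89} applies, so $G_X$ is conjugate in $W(\mathsf{D}_5)$ to one of $I_1$, $I_2$, or $I_3$. After conjugating, it therefore suffices to compute $C_{W(\mathsf{D}_5)}(I_j)$ for $j = 1, 2, 3$ and verify that each of these centralizers is contained, up to conjugacy, in $\langle c_1, c_5 \rangle$ or in $\langle c_5(15) \rangle$.

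For the computation I will exploit the general principle that an element $a\sigma \in C_2^4 \rtimes S_5$ commutes with $b\tau$ if and only if $\sigma\tau = \tau\sigma$ in $S_5$ and $a \cdot \sigma(b) = b \cdot \tau(a)$ in $C_2^4$. All three groups $I_j$ contain the 3-cycle $(234)$, so commuting with $(234)$ immediately forces the $S_5$-component into $\langle (234), (15) \rangle$ (the centralizer of $(234)$ in $S_5$) and the $C_2^4$-component into the $(234)$-fixed subspace of $C_2^4$, which is precisely $\langle c_1, c_5 \rangle$. Imposing the remaining generator relations then picks out a small subgroup in each case; I expect to find
\[
C_{W(\mathsf{D}_5)}(I_1) = \langle c_1, c_5 \rangle, \quad C_{W(\mathsf{D}_5)}(I_2) = \langle c_5(15) \rangle, \quad C_{W(\mathsf{D}_5)}(I_3) \subseteq \langle c_1, c_5 \rangle,
\]
from which the proposition follows immediately.

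The only genuine obstacle is the combinatorial verification of these three centralizers, which requires careful bookkeeping in the semidirect product $C_2^4 \rtimes S_5$ but involves no new conceptual ingredient. The analogous calculation in the proof of Proposition~\ref{Prop:StablyRational}, where $C_{W(\mathsf{D}_5)}(C_2 \cdot S_3) = \langle c_4(45) \rangle$ is computed by exactly this method, confirms that the approach is routine.
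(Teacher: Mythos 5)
Your proposal is correct and follows exactly the paper's argument: invoke Theorem~\ref{Theorem:KST89} to pin down $G_X$ up to conjugacy as $I_1$, $I_2$, or $I_3$, observe that the image of $\Aut(X)$ in $W(\mathsf{D}_5)$ must centralize the splitting group, and compute the three centralizers. Your claimed values $C_{W(\mathsf{D}_5)}(I_1)=\langle c_1,c_5\rangle$, $C_{W(\mathsf{D}_5)}(I_2)=\langle c_5(15)\rangle$ (cyclic of order $4$), and $C_{W(\mathsf{D}_5)}(I_3)=\langle c_1c_5\rangle\subseteq\langle c_1,c_5\rangle$ all check out.
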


\begin{proof}
If $X$ is stably $k$-rational but not $k$-rational, then $G_X$ is one of the groups in Theorem \ref{Theorem:KST89} up to conjugacy. The image of $\Aut(X)$ in $W(\mathsf{D}_5)$ must be in the centralizer of $G_X$. The centralizer of each of the possibilities for $G_X$ is contained in $\langle c_1,c_5 \rangle$ or $\langle c_5(15) \rangle$.
\end{proof}

\begin{remark}
    Using Proposition \ref{Prop:StablyRational}, we could rewrite Theorem \ref{Theorem:Rationality} with respect to stable $k$-rationality instead of $k$-rationality.
\end{remark} 

\noindent Lastly, we address the $k$-unirationality of surfaces with various geometric actions. Recall that a variety $X$ is $k$-unirational if there is a dominant rational map $f: \mathbb{P}_k^m \dashrightarrow X$ defined over $k$. It is clear that quasi-split surfaces are $k$-unirational. The following result is due to Manin (see Theorems 29.4 and 30.1 of \cite{M86}):

\begin{thm}
    Let $X$ be a del Pezzo surface of degree $d \geq 3$ over a field $k$ of characteristic zero. Then $X$ is $k$-unirational if and only if $X$ has a rational $k$-point.
\end{thm}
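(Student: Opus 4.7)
The forward implication is immediate: if $f:\mathbb{P}^m_k \dashrightarrow X$ is dominant and defined over $k$, then because $k$ has characteristic zero and is therefore infinite, the $k$-points of $\mathbb{P}^m_k$ are Zariski dense, and any $k$-point in the domain of definition of $f$ maps to a $k$-point of $X$.

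For the nontrivial direction my plan is to treat the degree cases separately, reducing each to the classical Segre--Manin construction. When $d \geq 5$, a del Pezzo surface with a $k$-point is already $k$-rational: for $d = 5$ this uses Swinnerton-Dyer together with Theorem 29.4 of \cite{M86} exactly as in the proof of Proposition \ref{prop:rationalityqs}, and for $d \geq 6$ it is classical; $k$-rationality trivially implies $k$-unirationality. When $d = 4$ and $P \in X(k)$ is a general $k$-point, projection from $P$ realizes $X$ birationally as a smooth cubic surface $Y \subset \mathbb{P}^3_k$ which inherits a $k$-point, so it suffices to handle the cubic case.

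For $d = 3$, the heart of the argument is Segre's tangent-plane construction. After replacing $P$ with a general $k$-point of $X$ (permissible once we produce many $k$-points by specialization along the first rational curve constructed below), we may assume $P$ avoids the $27$ lines. Then the tangent hyperplane section $C_P = T_P X \cap X$ is a plane cubic with a node at $P$. Picking a $k$-rational line $L$ in $T_P X$ through $P$, the residual intersection $L \cap C_P$ is a single residual $k$-point $Q_L$; varying $L$ through the pencil of lines in $T_P X$ through $P$ yields a $k$-rational parametrization $\mathbb{P}^1_k \dashrightarrow C_P$. To upgrade this to a dominant rational map from a surface, I would run the construction at the generic point of $C_P$, viewing it as a $k(t)$-point of $X_{k(t)}$: the tangent section at this generic point is a rational curve over $k(t)$, and the associated incidence family of tangent sections is a geometrically irreducible rational surface admitting a dominant rational $k$-map to $X$.

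The main obstacle I expect is showing that the family of tangent-section curves genuinely dominates $X$ rather than sweeping out only a curve or being generically reducible. This is the standard Bertini/irreducibility step: one verifies that the generic tangent section is a geometrically integral nodal cubic and that distinct general $k$-points produce distinct curves, both of which go through in characteristic zero. Once this is in place, composing the $k$-rational parametrization with the incidence family gives the desired dominant map from a rational variety to $X$.
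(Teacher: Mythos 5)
The paper does not actually prove this statement: it is quoted from Manin (Theorems 29.4 and 30.1 of \cite{M86}) and used as a black box in Proposition \ref{Prop:Unirationality}, so there is no in-paper argument to compare against. Your sketch is an attempt to reprove the cited result, and the overall strategy --- settle $d\ge 5$ by rationality, project a quartic from a point to a cubic, and run Segre's tangent-plane construction at the generic point of a first rational curve --- is indeed the classical argument behind the citation. The forward implication is fine, and the ``Bertini'' step you single out as the main obstacle is actually the easy part: if a point $x$ of a smooth cubic surface lies on none of the $27$ lines, then $T_xX\cap X$ is automatically an irreducible singular plane cubic (every reducible plane cubic has all of its singular points on a line component, which would be a line of $X$ through $x$), and whether the singularity is a node or a cusp is irrelevant, since either way projection from $x$ parametrizes the curve birationally over the residue field of $x$.

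The genuine gap is the step ``after replacing $P$ with a general $k$-point of $X$.'' Your proposed justification --- produce many $k$-points by specializing along the first rational curve --- is circular in exactly the case where it is needed. If $P$ lies on lines of $X$, consider the orbit structure: when some line through $P$ is individually $\Gamma$-stable you do get a copy of $\mathbb{P}^1_k$ on $X$ and can restart from a general point of it, but when $P$ is an Eckardt point whose three lines are permuted transitively by $\Gamma$, the tangent section $T_PX\cap X$ is the union of those three conjugate concurrent lines, and its only $k$-point is $P$ itself (a $k$-point on one line is carried by Galois to another line, hence lies on two of them, hence is their common point $P$). In that case your construction produces no new $k$-points and no $k$-rational curve, so the bootstrap never starts; the same defect infects the $d=4$ step, since projection from $P$ is only a birational map onto a smooth cubic when $P$ avoids the $16$ lines. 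Closing this requires an additional idea that is absent from the sketch --- for instance Segre's conjugate-point trick (find a point $Q$ in general position over a quadratic extension and recover $k$-points of $X$ as third intersections with the $k$-rational lines joining conjugate pairs on the conjugate tangent sections $C_Q$ and $C_{\sigma(Q)}$), or the case analysis in Manin's Chapter IV. As written, your argument establishes unirationality only under the stronger hypothesis that $X$ has a $k$-point in general position.
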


\begin{prop}
    \label{Prop:Unirationality}
    Let $k$ be a field of characteristic zero. Every group in $W(\mathsf{D}_5)$ that acts by automorphisms on a quartic del Pezzo surface over $k$ acts by automorphisms on a $k$-unirational quartic del Pezzo surface over $k$.
\end{prop}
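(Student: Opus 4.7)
The plan is to split by whether $G$ acts on a quasi-split surface. If $G$ acts by automorphisms on some quasi-split quartic del Pezzo surface $X_0$ over $k$, then by Proposition \ref{prop:rationalityqs} the surface $X_0$ is $k$-rational, hence $k$-unirational, so $G$ acts on a $k$-unirational quartic del Pezzo surface. Thus the only groups requiring additional attention are those in $\mathcal{M}_k$ which fail to act on any quasi-split surface.

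By Theorem \ref{Theorem:Non-quasi}, the only subgroups of $W(\mathsf{D}_5)$ that act on a quartic del Pezzo surface over $k$ without acting on any quasi-split quartic del Pezzo surface are $C_2^3 \cdot S_3$ and $C_2 \cdot S_3$, and by Proposition \ref{Prop:TwistCondition} these only appear in $\mathcal{M}_k$ when $\epsilon_3 \notin k$ and $x^2 + y^2 = -3$ has a solution in $k$. Since $C_2 \cdot S_3$ is a subgroup of $C_2^3 \cdot S_3$, it suffices to produce a single $k$-unirational quartic del Pezzo surface carrying an action of $C_2^3 \cdot S_3$.

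I would take the explicit surface $X$ of Example \ref{Ex:C23S3}, which is defined over any field $k$ with $\alpha^2+\beta^2 = -3$ and carries the required $C_2^3 \cdot S_3$-action. By the theorem of Manin cited just before the statement, it is enough to exhibit a $k$-rational point on $X$. Setting $u_3 = u_4 = 0$ collapses the two defining quadrics to
\begin{equation*}
u_0^2 - 8u_1^2 + u_2^2 = 0, \qquad 2u_0^2 - 4u_1^2 - u_2^2 = 0,
\end{equation*}
and their sum gives $u_0^2 = 4u_1^2$, after which either equation yields $u_2^2 = 4u_1^2$. The point $(2:1:2:0:0)$ is then a $k$-rational point on $X$, independent of the values of $\alpha,\beta$. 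By Manin's theorem, $X$ is $k$-unirational, which completes the proof.

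The only real obstacle is the last step, namely producing a $k$-point on the twisted surface; once the explicit equations of Example \ref{Ex:C23S3} are in hand, the specialization $u_3 = u_4 = 0$ removes the dependence on $\alpha,\beta$ and reduces the question to a trivial system, so no further input is needed.
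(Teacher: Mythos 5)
Your proof is correct and follows essentially the same route as the paper: reduce to the quasi-split case via Proposition \ref{prop:rationalityqs}, then handle $C_2^3 \cdot S_3$ (and its subgroup $C_2 \cdot S_3$) by exhibiting the $k$-point $(2:1:2:0:0)$ on the surface of Example \ref{Ex:C23S3} and invoking Manin's theorem. Your verification that this point lies on the twisted surface independently of $\alpha,\beta$ is a welcome extra detail the paper leaves to the reader.
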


\begin{proof}
Since every quasi-split surface is $k$-unirational, we only need to consider the groups $C_2^3 \cdot S_3$ and $C_2 \cdot S_3$. These two groups act on a quartic del Pezzo surface if and only if $x^2+y^2 = -3$ has a solution over $k$. Under this assumption, we constructed a surface with an action of $C_2^3 \cdot S_3$ in Example \ref{Ex:C23S3}. Notice that this surface contains the point $(2:1:2:0:0)$ and is consequently $k$-unirational.
\end{proof}

\bibliographystyle{alpha}
\bibliography{mybibliography}

\begin{thebibliography}{DM23b}

\bibitem[Bea10]{Beauville}
A.~Beauville.
\newblock Finite subgroups of $\text{PGL}_2(k)$.
\newblock {\em Contemporary Math.}, 522:23--29, 2010.

\bibitem[Bla06]{Blanc}
J.~Blanc.
\newblock {\em Finite abelian subgroups of the {C}remona group of the plane}.
\newblock PhD thesis, Universit\'e de Gen\`eve, 2006.
\newblock Th\`ese no. 3777, arXiv:0610368.

\bibitem[Boi23]{B23}
A.~Boitrel.
\newblock Del {P}ezzo surfaces of degree $5$ over perfect fields, 2023.
\newblock arXiv:2304.05328.

\bibitem[DD19]{DD18}
I.~Dolgachev and A.~Duncan.
\newblock Automorphisms of cubic surfaces in positive characteristic.
\newblock {\em Izv. Ross. Akad. Nauk Ser. Mat.}, 83(3):15--92, 2019.

\bibitem[DI09]{DI09}
I.~Dolgachev and V.~Iskovskikh.
\newblock {\em Finite subgroups of the plane {C}remona group}, volume~I, pages
  443--548.
\newblock Birkh{\"a}user Boston, Boston, 2009.

\bibitem[DM23a]{DM232}
I.~Dolgachev and G.~Martin.
\newblock Automorphisms of del {P}ezzo surfaces in characteristic 2, 2023.
\newblock arXiv:2209.09082.

\bibitem[DM23b]{DM23}
I.~Dolgachev and G.~Martin.
\newblock Automorphisms of del {P}ezzo surfaces in odd characteristic, 2023.
\newblock arXiv:2303.16170.

\bibitem[Dol12]{D12}
I.~Dolgachev.
\newblock {\em Classical algebraic geometry: a modern perspective}.
\newblock Cambridge Univ. Press, Cambridge, 2012.

\bibitem[Fly09]{Flynn}
E.~V. Flynn.
\newblock Homogeneous spaces and degree 4 del {P}ezzo surfaces.
\newblock {\em Manuscripta Math.}, 129:369--380, 2009.

\bibitem[Isk72]{I72}
V.~Iskovskikh.
\newblock Birational properties of a surface of degree 4 in $\mathbb{P}_k^4$.
\newblock {\em Math. USSR Sb.}, 17(1):30--36, 1972.

\bibitem[JLY02]{Jensen}
C.~U. Jensen, A.~Ledet, and N.~Yui.
\newblock {\em Generic polynomials: constructive aspects of the inverse
  {G}alois problem}.
\newblock Cambridge Univ. Press, Cambridge, 2002.

\bibitem[KST89]{KST89}
B.~Kunyavskij, A.~Skorobogatov, and M.~Tsfasman.
\newblock Del {P}ezzo surfaces of degree four.
\newblock {\em M\'emoires de la Soci\'et\'e Math. de France}, 37:1--113, 1989.

\bibitem[Man66]{M66}
Y.~Manin.
\newblock Rational surfaces over perfect fields.
\newblock {\em Publications Math\'ematiques de L'Institute des Hautes
  Scientifiques}, 30:99--113, 1966.

\bibitem[Man86]{M86}
Y.~Manin.
\newblock {\em Cubic forms: algebra, geometry, and arithmetic}.
\newblock Elsevier, North-Holland, Amsterdam, 2 edition, 1986.

\bibitem[SD70]{SD70}
H.~P.~F. Swinnerton-Dyer.
\newblock Rational points on del {P}ezzo surfaces of degree 5.
\newblock {\em 5th Nordic Summer School in Math.}, pages 287--290, 1970.

\bibitem[Ser97]{Serre}
J.~P. Serre.
\newblock {\em Galois cohomology}.
\newblock Springer Berlin, Heidelberg, 1997.

\bibitem[Sko10]{Skorobogatov}
A.~Skorobogatov.
\newblock Del {P}ezzo surfaces of degree 4 and their relation to {K}ummer
  surfaces.
\newblock {\em Enseign. Math.}, 56:73--85, 2010.

\bibitem[SZ21]{SZ21}
J.~Schneider and S.~Zimmermann.
\newblock Algebraic subgroups of the plane {C}remona group over a perfect
  field.
\newblock {\em {\'E}pijournal de G{\'e}om{\'e}trie Alg{\'e}brique}, 5, 2021.

\bibitem[Yas21]{Yasinsky}
E.~Yasinsky.
\newblock Automorphisms of real del {P}ezzo surfaces and the real plane
  {C}remona group, 2021.
\newblock arXiv:1912.10980.

\bibitem[Zai23]{Z23}
A.~Zaitsev.
\newblock Forms of del {P}ezzo surfaces of degree 5 and 6, 2023.
\newblock arXiv:2302.04937.

\end{thebibliography}

\end{document}